\newcommand{\br}[3]{{$#1$}$\lower4pt\hbox{$\tp\atop\raise4pt \hbox{$\scriptscriptstyle{#2}$}$} ${$#3$}}
\newcommand{\tw}[3]{{$#1$}${\,\scriptscriptstyle {#2}}\atop\raise9pt\hbox{$\scriptstyle\tp$} ${$#3$}}
\newcommand{\ttps}[2]{{#1}\raise5pt\hbox{$\lower12pt\hbox{$\scriptstyle\tp$}\atop \lower0pt\hbox{$\tilde\;$}$}\raise4.5pt\hbox{${\scriptstyle{#2}}$}}
\newcommand{\st}[1]{\mbox{${\,\scriptscriptstyle {#1}}\atop\raise5.5pt\hbox{$*$}$}}
\newcommand{\rd}[1]{\mbox{${\,\scriptscriptstyle {#1}}\atop\raise5.5pt\hbox{$\bullet$}$}}
\newcommand{\rt}[1]{\otimes_\chi}
\newcommand{\lt}[1]{\mbox{${\,\scriptscriptstyle {#1}}\atop\raise5.5pt\hbox{$\ltimes$}$}}
\newcommand{\btr}{\raise1.2pt\hbox{$\scriptstyle\blacktriangleright$}\hspace{2pt}}
\newcommand{\btl}{\raise1.2pt\hbox{$\scriptstyle\blacktriangleleft$}\hspace{2pt}}
\newcommand{\lcr}{\raise1.0pt \hbox{${\scriptstyle\rightharpoonup}$}}
\newcommand{\rcr}{\raise1.0pt \hbox{${\scriptstyle\leftharpoonup}$}}
\newcommand{\ttp}{{\lower12pt\hbox{$\tp$}\atop \hbox{$\tilde\;$}}}
\newcommand{\gr}{\mathrm{gr}\>}
\newcommand{\id}{\mathrm{id}}
\newcommand{\Tc}{\mathcal{T}}
\renewcommand{\S}{\mathcal{S}}
\newcommand{\Ru}{\mathcal{R}}
\newcommand{\Vc}{\mathcal{V}}
\newcommand{\Wc}{\mathcal{W}}
\newcommand{\Q}{\mathcal{Q}}
\newcommand{\Nc}{\mathcal{N}}
\newcommand{\C}{\mathbb{C}}
\newcommand{\Z}{\mathbb{Z}}
\newcommand{\tp}{\otimes}
\newcommand{\V}{V}
\newcommand{\U}{U}
\newcommand{\F}{\mathcal{F}}
\newcommand{\ve}{\varepsilon}
\newcommand{\dt}{\delta}
\newcommand{\op}{\oplus}
\newcommand{\la}{\lambda}
\newcommand{\Char}{\mathrm{ch }}
\newcommand{\End}{\mathrm{End}}
\newcommand{\Span}{\mathrm{Span}}
\newcommand{\mlt}{\mathrm{mlt}}
\newcommand{\Tr}{\mathrm{Tr}}
\newcommand{\Rm}{\mathrm{R}}
\newcommand{\diag}{\mathrm{diag}}
\newcommand{\g}{\mathfrak{g}}
\renewcommand{\b}{\mathfrak{b}}
\renewcommand{\k}{\mathfrak{k}}
\newcommand{\h}{\mathfrak{h}}
\newcommand{\s}{\mathfrak{s}}
\renewcommand{\o}{\mathfrak{o}}
\newcommand{\m}{\mathfrak{m}}
\newcommand{\nn}{\nonumber}
\renewcommand{\l}{\mathfrak{l}}
\newcommand{\si}{\sigma}
\newcommand{\al}{\alpha}
\newcommand{\bt}{\beta}
\newcommand{\be}{\begin{eqnarray}}
\newcommand{\ee}{\end{eqnarray}}
\newtheorem{thm}{Theorem}[section]
\newtheorem{propn}[thm]{Proposition}
\newtheorem{lemma}[thm]{Lemma}
\newtheorem{corollary}[thm]{Corollary}
\newtheorem{remark}[thm]{Remark}
\newcommand{\parag}{\advance\prg by1 {\noindent\bf\thesection.\the\prg\hspace{6pt}}}
\begin{document}
\title{Quantum exceptional group  $G_2$ \\and its conjugacy classes}
\author{
Alexander Baranov$^\dag$, Andrey Mudrov$^{\dag,}$\footnote{This study is supported in part by the RFBR grant 15-01-03148.} \hspace{1pt}, and Vadim Ostapenko$^\ddag$\vspace{20pt}\\
{\em \small  Dedicated to the memory of Petr Kulish}
\vspace{10pt}\\
$\dag$ \small Department of Mathematics,\\ \small University of Leicester, \\
\small University Road,
LE1 7RH Leicester, UK\\[10pt]
$\ddag$ \small Department of Mathematics,\\ \small Bar Ilan University, \\
\small Ramat Gan
 5290002, Israel.
 \\
}
\date{}
\maketitle
\begin{abstract}
  We construct quantization of semisimple conjugacy classes of the exceptional group $G=G_2$ along with and by means of their
exact representations in highest weight modules of the quantum group $U_q(\g)$. With every point $t$ 
of a fixed maximal torus we associate a highest weight module $M_t$ over $U_q(\g)$ and realize the quantized polynomial algebra
of the class of $t$ by linear operators on $M_t$. Quantizations corresponding to points of the same orbit of the Weyl group
are isomorphic.
\end{abstract}

{\small \underline{Mathematics Subject Classifications}: 81R50, 81R60, 17B37.
}

{\small \underline{Key words}: Quantum groups, conjugacy classes, quantization
}
\newpage
\tableofcontents
\section{Introduction}
Exceptional Lie groups occupy a special position in mathematics among simple groups and find important applications in theoretical physics
in connection with string theories, supergravity, and grand unification, \cite{Ra,HS}. At the same time their quantum analogs are the least studied
compared to other quantum groups.
In the this paper we focus on the simplest exceptional group $G=G_2$ and 
construct quantization of its semisimple conjugacy classes via operator algebra realization.

To this end, we undertake a thorough analysis of certain modules over the quantum group $U_q(\g)$, where $\g$ is the Lie algebra of $G$.
In particular, we associate such module, $M_\la$, with every point $t$ of a fixed maximal torus of $G$ and prove that the polynomial
algebra of the class $O_t\ni t$ can be quantized as a subalgebra in $\End(M_\la)$. Here $\la$ is the highest weight of $M_\la$, that
depends on $t$. The quantum polynomial algebra
is simultaneously presented as a quotient of the locally finite part $\C_q[G]$ of the adjoint module $U_q(\g)$ by the explicitly given ideal annihilating $M_\la$.

The quantization is facilitated by the properties of the matrix $\Q\in U_q(\g)\tp U_q(\g)$ expressed through the 
universal R-matrix $\Ru$ as $\Q=\Ru_{21}\Ru$. It is known to commute with the coproduct $\Delta(x)$ 
of all elements $x\in U_q(\g)$ and satisfy the reflection equation \cite{DKM}. On specialization of the  left tensor factor 
to the minimal representations of $U_q(\g)$ on $V=\C^7$, the entries of matrix $\Q$ generate $\C_q[G]\subset U_q(\g)$. Quantization is possible due to semisimplicity of the operator $\Q$ on $V\tp M_\la$, which 
 is a consequence of direct sum decomposition of  $V\tp M_\la$ into a sum of submodules of highest weight.
 It may be  interpreted as quantization of the basic quantum homogeneous vector bundles over $O_t$.
  We
give exact criteria for such decomposition for each module $M_\la$, which  required a detailed study of singular vectors in
$V\tp M_\la$.

The setup of the paper is a follows.
Section \ref{secCCC} presents a classification of semisimple conjugacy classes of $G_2$ adopted to our purposes.
It is followed by the basic information about the quantum group $U_q(\g)$
and its minimal representation on $\C^7$ in Section \ref{secQUEA}.
The quantization theorem based on decomposition of $V\tp M_\la$ is stated in Section
\ref{secQUEA}. The subsequent sections prove this decomposition for each type of $M_\la$.
In Section \ref{secGVM} we define generalized parabolic Verma modules and establish some  their  properties.
We do regularization of singular vectors in $V\tp M_\la$ for a general Verma module $M_\la$,
in Section \ref{secModStrH}.
The last three sections are devoted to regularization  of singular vectors
in tensor products and direct sum decomposition of $V\tp M_\la$ for all other types of $M_\la$.
Some useful formulas including the entries of matrix participating in the Shapovalov inverse form 
can be found in Appendix.

Throughout the paper we adopt the following general convention:
\begin{itemize}
  \item For better readability of formulas, we denote a scalar inverse by the bar, e.g. $\bar q=q^{-1}$.
  \item The notation $a\simeq b$ means  that $a$ is proportional to $b$ with a non-zero scalar factor. If the coefficient is
a scalar function, we thereby assume that it never turns zero. The symbol $\simeq$ also stands for isomorphism, which
 is always clear from the context and causes no confusion.
  \item Divisibility by a regular scalar function $\phi$ is denoted by $\phi \sqsubset$.
\end{itemize}
\section{Semisimple conjugacy classes of $G_2$}
\label{secCCC}
In this section we describe semisimple conjugacy classes of the complex algebraic group $G=G_2$.
Let  $\g$ denote the Lie algebra of $G$ and with a fixed Cartan subalgebra $\h$.
Its root system $\Rm$ is displayed on the figure below.
\vspace{10pt}

\hspace{150pt}
\includegraphics[width=140pt,height=142pt]{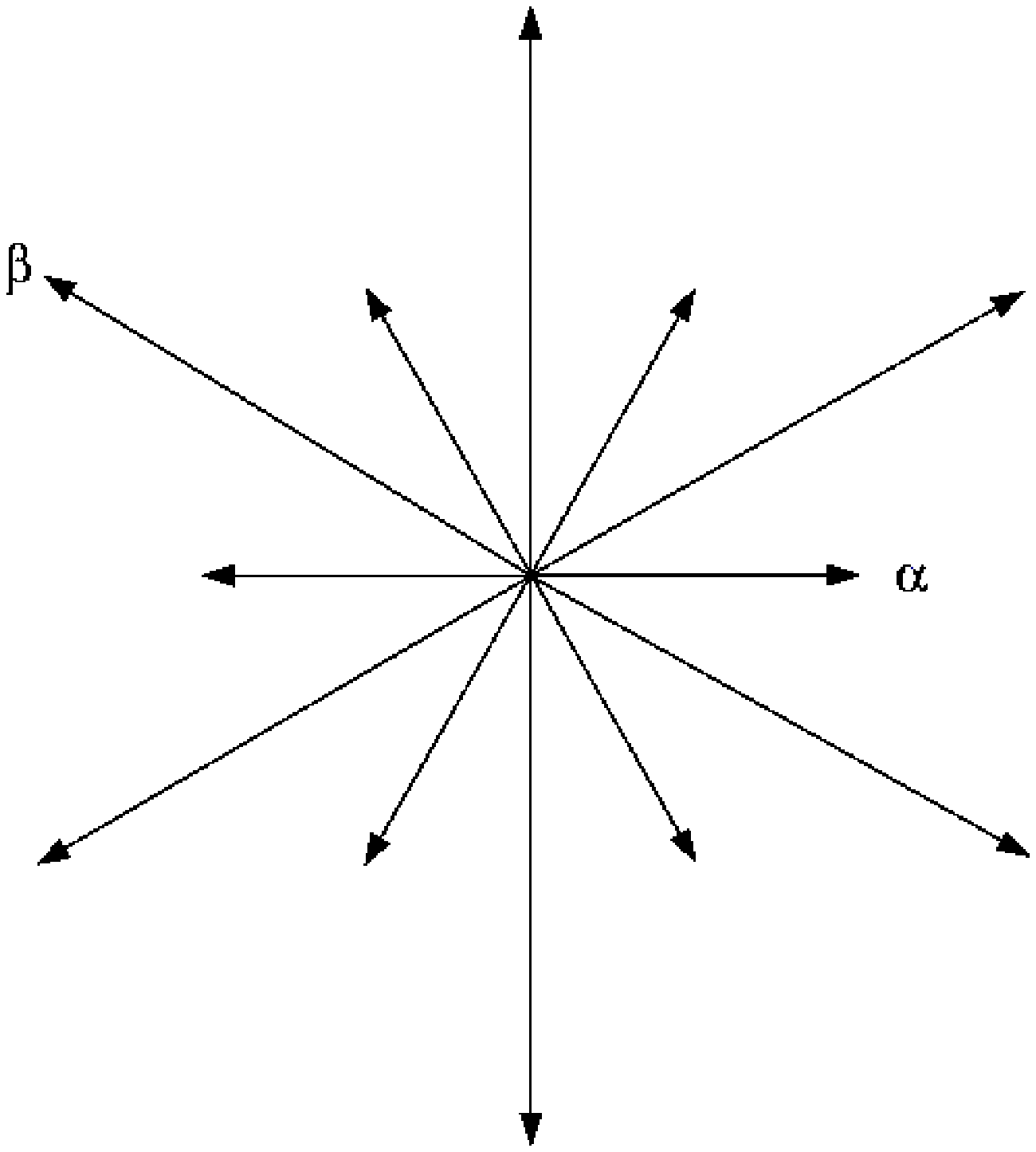}
\vspace{5pt}

\noindent
The subset of positive roots $\Rm^+\subset \Rm$ contains the basis of simple roots $\Pi=\{\al,\bt\}$.
Fix an inner product on $\h^*$ as
$$
(\al,\al)=2, \quad (\al,\bt)=-3, \quad  (\bt,\bt)=6.
$$
The half-sum of positive roots $5\al+3\bt$  is denoted by $\rho$.

To all $\la\in \h^*$ we assign its image $h_\la$  under the isomorphism $\h^*\simeq \h$
via the canonical form: $\mu(h_\la)=(\la,\mu)$ for all $\mu\in \h^*$.

The group $G$ has an exact representation in $\C^7$.
The corresponding representation of the quantum group is given in Section \ref{Rep}.

The affine Dynkin diagram
$
\>
\begin{picture}(45,10)
  \put(0,3){\circle*{4}} \put(1,3){\line(1,0){18}}
  \put(20,3){\circle{2}}  \put(21,3){\line(1,0){18}}
  \put(40,3){\circle{2}}
  \put(32,0){$>$}\put(21,4){\line(1,0){16}}\put(21,2){\line(1,0){16}}
  \end{picture}
$
of  $\g$
suggests the following stabilizers $\k\subset \g$ of semisimple conjugacy classes labelled by the their root bases $\Pi_\k\subset \Rm^+_\g$:

\begin{center}
\begin{tabular}{|c|ccc|}
  \hline
  $\k$ & &$\Pi_\k$ &   \\
  \hline
  $\h$ & &$\varnothing$ &   \\
  \hline
    $\k_s$ & $\{\al\}$,& $\{\al+\bt\}$,& $\{2\al+\bt\}$   \\
  \hline
  $\k_l$ &  $\{\bt\}$, & $\{3\al+\bt\}$,& $\{3\al+2\bt\}$   \\
  \hline
  $\k_{s,l}$ &  $\{\al,3\al+2\bt\}$, &$\{\al+\bt,3\al+\bt\}$,& $\{2\al+\bt,\bt\}$  \\
  \hline
  $\k_{l,l}$ &  &$\{\bt,3\al+2\bt\}$&  \\
  \hline
\end{tabular}
\end{center}
The subscripts indicate the lengths of roots. There are three Levi  types with $\#\Pi_\k\leqslant 1$ and two pseudo-Levi types with $\#\Pi_\k=2$.

Different although isomorphic $\k$ give rise to the same
conjugacy class $G/K$, where $K$ is the subgroup with the Lie algebra $\k$.
Still we make this distinction because we associate with them different representations
of quantized $G/K$.

Let $T$ denote the maximal torus of $G$ corresponding to $\h$ and fix $t\in T$
such that $\k$ is the centralizer of $t$. We parameterize $T$ with a pair of complex coordinates $x,y\in \C^\star$.
In the matrix realization that gives
$$
t=\diag(xy,x,y,1,\bar x,\bar y,\bar x\bar y)\in \End(\C^7).
$$
Regarding the roots as  characters on $T$, we have $\al(t)=y$, $\bt(t)=xy^{-1}$.

Define $T^{\k}\subset  T$ as the subset of points whose centralizer Lie algebra is $\k$.
We will also use the notation $T^{\Pi_\k}=T^{\k}$. We select the subset $T^\k_{reg} \in T^\k$  of regular points,
whose minimal polynomial in the basic representation has maximal degree. The complementary subset
in $T^\k$ is denoted by $T^\k_{bord}$ and called borderline, following  \cite{AM}. Such points
are present only for $\k=\h$ and $\k=\k_l$. They are a sort of "transitional" from Levi to pseudo-Levi type, hence the name.

The set $T^\varnothing=T^\h$ is determined by the conditions
$x\not =1$, $y\not =1$, $x\not =y$, $xy^2\not =1$, $x^2 y\not =1$, $x y\not =1$.
It is convenient to use the first three diagonal matrix entries for description of  $t$:
\be
&&\begin{array}{rcccccccc}
T^\varnothing_{reg}: (xy,x,y), &x\not =y, &x^iy^j\not =1, &i,j=0,1,2;
\end{array}
\label{t-h-reg}\\
&&\begin{array}{ccccccc}
T^\varnothing_{bord}:(-x,-1,x), \quad (-x,x,-1),\quad (-1,x,-\bar x),\quad x^4 \not =1;
\end{array}
\label{t-h-bord}\\
&&\begin{array}{ccccccc}
T^{\al}: (x,x,1), &T^{\al+\bt}:(x,1,x),&T^{2\al+\bt}:(1,x,x^{-1}),&x^2\not =1;\\
\end{array}
\label{t-s}\\
&&\begin{array}{ccccccc}
T^{\bt}_{reg}:(x^2,x,x), &T^{3\al+\bt}_{reg}:(\bar x, \bar x^2,x), &
T^{3\al+2\bt}_{reg}:(\bar x,x,\bar x^2), &x^3,x^4\not =1; \\
\end{array}
\label{t-l-reg}\\
&&\begin{array}{ccccccc}
T^{\bt}_{bord}:(-1,x,x), &T^{3\al+\bt}_{bord}:(\bar x, -1,x), &
T^{3\al+2\bt}_{bord}:(\bar x,x,-1), &x=e^{\pm \frac{\pi i}{2}}; \\
\end{array}
\label{t-l-bord}\\
&&\begin{array}{ccccccc}
T^{\al,3\al+2\bt}: (-1,-1,1), &
T^{\al+\bt,3\al+\bt}:(-1, 1,-1),&
T^{2\al+\bt,\bt}:(1,-1,-1);&\\
\end{array}
\label{t-sl}\\
&&\begin{array}{ccccccc}
&T^{\bt,3\al+\bt }:(\bar x,x,x), & x =e^{\pm \frac{2\pi i}{3}}. \\
\end{array}
\label{t-ll}
\ee
We have listed all possible  $\k\subset \g$, so that the sets
$$
T^{s}=T^{\al}\cup T^{\al+\bt} \cup T^{2\al+\bt}, \quad T^{l}=T^{\bt}\cup T^{3\al+\bt} \cup T^{3\al+2\bt},
$$
$$
T^{{s,l}}=T^{\al,3\al+2\bt}\cup T^{\al+\bt,3\al+\bt} \cup T^{2\al+\bt,\bt}, \quad T^{{l,l}}=T^{\bt,3\al+\bt },
$$
along with $T^\varnothing$ exhaust all of $T$. They consist of points whose
conjugacy classes are isomorphic as homogeneous spaces.

Denote by $\tilde G$ the group $SL(7)$ and by $\tilde W$ its Weyl group. The Weyl group of $G$ is denoted by $W$.
It is elementary fact  that the intersection of $\tilde G$-conjugacy class with $G$ consists of a finite number of $G$-classes.
\begin{propn}
The conjugacy class of each semisimple point $t\in G$ is the  intersection of its  $\tilde G$-conjugacy class with $G$.
\end{propn}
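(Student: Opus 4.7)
The plan is to reduce the assertion to a statement about $W$-orbits on the maximal torus, and then verify that statement using the explicit parametrization given just above. Since both $t$ and any $s\in G\cap \tilde G\cdot t$ are semisimple in $G$, both are $G$-conjugate to points of $T$; replacing them by torus representatives, it suffices to show that two elements of $T$ which are $\tilde G$-conjugate are already $W$-conjugate. Under the matrix realization $t=\diag(xy,x,y,1,\bar x,\bar y,\bar x\bar y)$, the torus $T$ sits inside the diagonal torus $\tilde T\subset \tilde G$, and two elements of $\tilde T$ are $\tilde G$-conjugate iff they are $\tilde W=S_7$-conjugate. The proposition thereby becomes the injectivity of the natural map $T/W\to \tilde T/\tilde W$.

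I would next unwind the $W$-action in the coordinates $(x,y)\in(\C^\star)^2$. The six non-unit eigenvalues of $t(x,y)$ on $\C^7$ are $x^{\pm 1},y^{\pm 1},(xy)^{\pm 1}$, obtained by evaluating the six short roots $\pm\al,\pm(\al+\bt),\pm(2\al+\bt)$ at $t$. The Weyl group $W$ of $G_2$ decomposes as $W\cong S_3\times\{\pm 1\}$: the factor $S_3$ permutes the three short-root characters $x$, $y$, $xy$ (subject to the relation $x\cdot y\cdot (xy)^{-1}=1$), while $-1$ acts by simultaneous inversion $(x,y)\mapsto(\bar x,\bar y)$. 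Consequently, two torus points $(x,y)$ and $(x',y')$ are $W$-conjugate iff $(x',y',x'y')=\sigma(x^\eps,y^\eps,(xy)^\eps)$ for some $\sigma\in S_3$ and $\eps\in\{\pm 1\}$; on the other hand, $\tilde W$-conjugacy amounts to coincidence of the unordered multiset $\{x^{\pm 1},y^{\pm 1},(xy)^{\pm 1}\}$.

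On the regular stratum $T^\varnothing_{reg}$, all six values are pairwise distinct, so the three reciprocal pairs $\{x,\bar x\}$, $\{y,\bar y\}$, $\{xy,\bar x\bar y\}$ are unambiguously determined by the multiset. After using the $S_3$-part of $W$ to align the pairs for $(x',y')$ with those for $(x,y)$, the remaining freedom is $(x',y')=(x^{\eps_1},y^{\eps_2})$ with $\eps_i\in\{\pm 1\}$; but the constraint that $x'y'$ must belong to the third pair $\{xy,\bar x\bar y\}$ forces $\eps_1=\eps_2$ (since $x\bar y$ and $\bar x y$ are not in the multiset in the regular case), placing $(x',y')$ in the $W$-orbit of $(x,y)$.

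The main obstacle is the degenerate strata listed in (\ref{t-h-bord})--(\ref{t-ll}), where eigenvalue coincidences permit extra $\tilde W$-identifications. I would dispose of each by direct inspection, guided by the tabulation already in place: for the one-parameter Levi strata (\ref{t-s})--(\ref{t-l-bord}), the three representatives of fixed root length listed there are $W$-equivalent via the $S_3$-part, and the residual $x\leftrightarrow\bar x$ within each is exactly the $-1$ action, accounting for all $\tilde W$-identifications; the borderline set (\ref{t-h-bord}) is likewise a single $W$-orbit for each value of $x$. For the finitely many pseudo-Levi and isolated points (\ref{t-sl}) and (\ref{t-ll}) the $\tilde W$-images in $\tilde T$ can be listed by hand and shown to form single $W$-orbits. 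This case-by-case verification completes the injectivity of $T/W\to\tilde T/\tilde W$ and hence the proposition.
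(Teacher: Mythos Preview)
Your reduction to the torus statement $\tilde W t\cap T=Wt$ and the ensuing stratum-by-stratum verification is exactly the paper's approach. The only difference is cosmetic: on $T^\varnothing$ the paper enumerates the $W$-orbit explicitly via chains of simple reflections and matches it against a count of $\tilde W t\cap T$, whereas you argue structurally through the decomposition $W\cong S_3\times\{\pm1\}$; for the remaining strata both the paper and your sketch proceed by direct inspection of the listed representatives.
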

\begin{proof}
A semisimple conjugacy class of $\tilde G$ is determined by the set of eigenvalues and their multiplicities.
It is sufficient to check that $\tilde W t \cap T= Wt$ for all $t\in T$.

Fix  $t=(xy,x,y)\in T^\varnothing$ so that $y\not =-1$. One can assume that the multiplicity of $y$ is $1$, since otherwise
 $x\not =-1$ and one can flip $x\to y$ by $\si_\bt$.
Present $\tilde W t\cap T$  as a union $Y\cup \bar Y$ of sets whose elements have either $y$ or $\bar y$
among their first three coordinates. It is  disjoint for $t\in T^\varnothing$.

One can check that $\#|Y|=\#|\bar Y|=6$. Then $Y\subset Wt$ since
$$(xy,x,y) \stackrel{\si_{\bt}}{\sim} (xy,y,x) \stackrel{\si_{\al}}{\sim} (y,xy,\bar x)\stackrel{\si_{\bt}}{\sim}  (y,\bar x, xy) \stackrel{\si_{\al}}{\sim} (\bar x,y,\bar x\bar y) \stackrel{\si_{\bt}}{\sim}  (\bar x,\bar x\bar y,y).$$
The set $\bar Y$ is obtained from $Y$ by inverting the coordinates. One has  $\bar Y\subset Wt$ as
 $Y\ni (xy,x,y) \stackrel{\si_{\al}}{\sim} (x,xy,\bar y)\in \bar Y$.

Further,
$$
 T^{s}\ni(x,1,x) \stackrel{\si_{2\al+\bt}}{\sim}  (\bar x,\bar x,1)\stackrel{\si_{\bt}}{\sim} (\bar x,1,\bar x)\stackrel{\si_{2\al+\bt}}{\sim} (x,x,1)\stackrel{\si_{\al+\bt}}{\sim} (1,\bar x, x)\stackrel{\si_{\bt}}{\sim}  (1, x,\bar x),
 \quad x^2 \not =1,
$$
$$
 T^{l}\ni (\bar x, x,\bar x ^2) \stackrel{\si_{\bt}}{\sim} (\bar x,\bar x^2, x) \stackrel{\si_{\al}}{\sim} (\bar x^2,\bar x,\bar x) \stackrel{\si_{\bt+2 \al}}{\sim} (x^2,x,x)\stackrel{\si_{\al}}{\sim} (x,x^2,\bar x)\stackrel{\si_{\bt}}{\sim} (x,\bar x,x^2),\quad
 x^2,x^3 \not =1,
$$
$$
T^{l,s}\ni (1,-1,-1)\stackrel{\si_{\al}}{\sim} (-1,1,-1) \stackrel{\si_{\bt}}{\sim} (-1,-1,1),
$$
$$
T^{l,l}\ni (e^{\frac{2\pi i}{3}},e^{-\frac{2\pi i}{3}},e^{-\frac{2\pi i}{3}})\stackrel{\si_{\al}}{\sim}(e^{-\frac{2\pi i}{3}},e^{\frac{2\pi i}{3}},e^{\frac{2\pi i}{3}}).
$$
This proves $\tilde W t= Wt$ for all $t\in T$.
\end{proof}
\begin{corollary}
  The ideal of a semisimple conjugacy class in $\C[G]$ is generated by the entries of the minimal polynomial over the maximal ideal of the
  subalgebra of invariants.
\end{corollary}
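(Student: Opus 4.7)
The plan is to reduce the corollary to the analogous classical statement for $\tilde G = SL(7)$ and then transfer it to $G$ using the preceding Proposition. For $\tilde G$, the ideal of a semisimple conjugacy class $\tilde O_t \subset \tilde G$ in $\C[\tilde G]$ is generated by the entries of $m_t(g) \in \mathrm{Mat}_7(\C[\tilde G])$ together with the maximal ideal of $t$ in $\C[\tilde G]^{\tilde G}$, for $m_t(g)=0$ forces $g$ to be diagonalisable with spectrum contained in that of $t$ (since $m_t$ is squarefree for semisimple $t$), and the invariants then pin down the multiplicities.

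Let $J \subset \C[G]$ be the ideal generated by the entries of $m_t(g)$ in the seven-dimensional representation of $G$ and by the maximal ideal $\m_t \subset \C[G]^G$ associated with the $G$-class of $t$. By the preceding Proposition, $O_t^G = \tilde O_t \cap G$ as sets, and every $G$-invariant function on $G$ is constant on $O_t^G$; together these give $V(J) = O_t^G$, so $J \subseteq I(O_t^G) \subseteq \sqrt{J}$, and the task reduces to showing that $J$ is radical. The ideal $J$ is $G$-stable under conjugation (the invariant generators are fixed and $m_t(hgh^{-1}) = h\,m_t(g)\,h^{-1}$ preserves the span of the entries of $m_t(g)$), so $X := \mathrm{Spec}\,\C[G]/J$ is a $G$-stable closed subscheme of $G$ whose underlying reduced scheme is the smooth orbit $O_t^G \simeq G/K$. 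By $G$-equivariance, reducedness at a single point propagates to all of $X$, so it suffices to verify reducedness at the point $t$.

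The main obstacle is this tangent verification, namely the identity $T_t X = T_t \tilde O_t \cap T_t G = T_t O_t^G$ inside $T_t \tilde G$. Using left-invariant trivialisations one has $T_t G = \g$, $T_t \tilde O_t = (1-\Ad_{t^{-1}})\tilde\g$, and $T_t O_t^G = (1-\Ad_{t^{-1}})\g$. Semisimplicity of $\Ad_t$ on $\tilde\g$ furnishes an $\Ad_t$-stable decomposition $\tilde\g = \g \oplus \m$, from which any element of $(1-\Ad_{t^{-1}})\tilde\g \cap \g$ is seen to lie already in $(1-\Ad_{t^{-1}})\g$: writing a preimage as $\eta_\g + \eta_\m$ with respect to this decomposition, the condition that its image lies in $\g$ forces $(1-\Ad_{t^{-1}})\eta_\m = 0$, so the $\g$-summand alone already produces the given tangent vector. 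This identifies $T_t X$ with $T_t O_t^G$ of the correct dimension $\dim G/K$, which together with the $G$-homogeneity of $X$ forces $X$ to be reduced, completing the proof.
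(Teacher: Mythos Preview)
Your proof is correct and follows essentially the same route as the paper's: both reduce the claim to a tangent-space computation at $t$, showing that the Zariski tangent space of the scheme cut out by the proposed generators equals $\tilde\m_t\cap\g=\g\ominus\k=T_tO_t$. The paper packages this as $\ker df_1(t)\cap\ker df_2(t)$ with a reference to \cite{M3}, while you phrase it via $G$-homogeneity of the scheme and an explicit $\Ad_t$-stable splitting $\tilde\g=\g\oplus\m$; the two presentations are interchangeable.
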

\begin{proof}
  Fix a semisimple point $t\in G$ and consider its conjugacy classes $\tilde O_t\subset  \tilde G$ and $O_t\subset G$.
  Let $F_1$ and $F_2$ be $G$-submodules in $\End(\C^7)$ generating the ideals $N(\tilde O_t)$ and $N(\tilde G)$, respectively.
  Put $f_i\colon \End(\C^7)\to F_i$ be the corresponding maps and set $f=f_1\oplus f_2$.
  By construction, $f$ is $G$-equivariant.
  It is sufficient to prove that $\ker df(t)=\ker df_1(t)\cap \ker df_2(t)$ has the same dimension as
  $O_t$, cf. \cite{M3}, Prop. 2.1.
  For the general linear group, $\tilde \m_t=\tilde \g\ominus\tilde \k=\ker df_1(t)$, and $\tilde \m_t=\mathrm{Ad}_t(\tilde \m_t)$.
  On the other hand, $\ker df_2(t)= \mathrm{Ad}_t(\g)$, and $\ker df=\ker df_1\cap \ker df_2=\tilde \m_t\cap \g=\g\ominus\k$
  since $t\in G$. This completes the proof.
\end{proof}
Recall from linear algebra that a semisimple $\tilde G$-class is determined by the set of eigenvalues and their  multiplicities.
Eigenvalues are fixed by the minimal polynomial while the multiplicities by the character of the subalgebra
of invariants under conjugation. The subalgebra of invariants when restricted to
maximal torus  $\tilde T\subset \tilde G$ is
generated by the functions $t\mapsto \Tr(t^m)$, $m\in 1,\ldots,7$, and the character is  evaluation at $t$.
The possible minimal polynomials of $t\in T $ are listed here:
$$
(t-xy)(t-x)(t-y)(t-1)(t-\bar x)(t-\bar y)(t-\bar x \bar y), \quad t\in T^\varnothing_{reg}, \quad (x,y)\in \C^2_{reg},
$$
$$
(t^2-x^2)(t^2-1)(t^2-\bar x^2),\quad t\in T^\varnothing_{bord}, \quad  x^4 \not =1, \quad \mlt(-1)=2, \quad
$$
$$
(t-x)(t-1)(t-\bar x),\quad t\in T^{s}, \quad   x^2 \not =1,\quad  \quad \mlt(x^{\pm 1})=2,
$$
$$
(t-x^2)(t-x)(t-1)(t-\bar x)(t-\bar x^2),\quad t\in T^{l}_{reg}, \quad  x^3 \not =1\not =   x^4,\quad \mlt(x^{\pm 1})=2,
$$
$$
(t^4-1),\quad t\in T^{s}_{bord}, \quad \mlt(-1)=\mlt(\pm i)=2,
$$
$$
(t^2-1),\quad t\in T^{s,l},\quad \mlt(-1)=4,
$$
$$
(t^3-1),\quad t\in T^{l,l},\quad \mlt(e^{\pm\frac{2\pi i}{3}})=3.
$$
Remark that  regular points, contrary to borderline, separate irreducible $\k$-submodules in $\C^7$.
The two bottom lines correspond to the two pseudo-Levi classes.

\section{Quantized universal enveloping algebra}
\label{secQUEA}

Throughout the paper we assume  that $q\in \C$ is not a root of unity. Denote by $U_q(\g_\pm)$ the $\C$-algebra generated by  $e_{\pm\al}, e_{\pm\bt}$
 subject to the q-Serre relations
$$
e_{\pm \al}^{4}
e_{\pm \bt}
-
[4]_q
e_{\pm \al}^{3}
e_{\pm \bt}
e_{\pm \al}
+
[3]_q(q^2+q^{-2})
e_{\pm \al}^{2}
e_{\pm \bt}
e_{\pm \al}^2
-
[4]_q
e_{\pm \al}
e_{\pm \bt}
e_{\pm \al}^3
+
e_{\pm \bt}
e_{\pm \al}^4
=0
,
$$
$$
e_{\pm \bt}^{2}
e_{\pm \al}
-
(q^3+q^{-3})
e_{\pm \bt}
e_{\pm \al}
e_{\pm \bt}
+
e_{\pm \al}
e_{\pm \bt}^2
=0.
$$
Here and further on, $[z]_q=\frac{q^z-q^{-z}}{q-q^{-1}}$ whenever $q^{\pm z}$ make sense.

Denote by $U_q(\h)$ the commutative $\C$-algebra generated by $\{q^{\pm h_\al}\}_{\al\in \Pi}$, with $q^{h_\al}q^{- h_\al}=1$. The quantum group $U_q(\g)$ is a $\C$-algebra generated by  $U_q(\g_\pm)$ and $U_q(\h)$ subject
to the relations \cite{D}:
$$
q^{h_\al}e_{\pm\al}q^{h_\al}=q^{\pm 2} e_{\pm\al}, \quad q^{h_\al}e_{\pm\bt}q^{-h_\al}=q^{\mp 3} e_{\pm\bt},
$$
$$
q^{h_\bt}e_{\pm\al}q^{-h_\bt}=q^{\mp 3} e_{\pm\al}, \quad q^{h_\bt}e_{\pm\bt}q^{-h_\bt}=q^{\pm 6} e_{\pm\al},
$$
$$
[e_{\al},e_{-\al}]=\frac{q^{h_\al}-q^{-h_\al}}{q-q^{-1}}, \quad [e_{\bt},e_{-\bt}]=\frac{q^{h_\bt}-q^{-h_\bt}}{q^3-q^{-3}}.
$$
Remark that the vector space $\h$ is not contained in $U_q(\g)$, still it is convenient to keep reference to $\h$
for additive parametrization of monomials in $U_q(\h)$.

Set up the comultiplication on the generators  as 
\be
&\Delta(e_{\al})=e_{\al}\tp 1+ q^{h_{\al}}\tp e_{\al},
\quad
\Delta(e_{-\al})=1\tp e_{-\al} + e_{-\al} \tp q^{-h_{\al}},
\nn\\&
\Delta(q^{\pm h_{\al}})=q^{\pm h_{\al}}\tp q^{\pm h_{\al}},
\nn
\ee
for all $\al \in \Pi$. It is opposite to that in \cite{CP}.

We will use the notation $f_\al=e_{-\al}$, $f_\bt=[3]_qe_{-\bt}$, so the only  relation that is
inhomogeneous in $e_{-\al}$  translates to
$[e_{\bt},f_{\bt}]=\frac{q^{h_\bt}-q^{-h_\bt}}{q-q^{-1}}$.

The subalgebras in $U_q(\g)$ generated by $U_q(\g_\pm)$ over $U_q(\h)$ are quantized universal enveloping algebras of the
Borel subalgebras $\b_\pm=\h+\g_\pm\subset \g$ denoted further by $U_q(\b_\pm)$.

The Chevalley  generators $e_{\pm\al}$ can be supplemented with higher root vectors $e_{\pm\bt}$ for
all $\bt\in {\rm R}^+$. They participate in construction of a Poincar\'{e}-Birkhoff-Witt (PBW) basis in  $U_q(\g)$
and universal $\Ru$-matrix,  \cite{CP}.

The universal R-matrix is an element of a certain extension of $U_q(\g)\tp U_q(\g)$.
Let $\{\ve_i\}_{i=1}^2$ be an orthogonal basis in $\h^*$.
The exact expression for $\Ru$  up to the flip is  extracted from \cite{CP}, Theorem 8.3.9:
\be
\Ru= q^{\sum_{i=1}^2 h_{\ve_i}\tp  h_{\ve_i}}\prod_{\mu\in \Rm^+} \exp_{q_\mu}(1-q_\mu^{-2})(e_{-\mu}\tp e_{\mu} ) \in U_q(\b_-)\hat \tp U_q(\b_+),
\label{Rmat}
\ee
where  $\exp_{q}(x )=\sum_{k=0}^\infty q^{\frac{1}{2}k(k+1)}\frac{x^k}{[k]_q}$, $q_\mu=q^{\frac{(\mu,\mu)}{2}}$, and the product is
ordered in a certain way. Its reduction to the minimal representation can be found in \cite{Re,Se}.

\subsection{Minimal representation of $U_q(\g)$}
\label{Rep}
In this section we describe a representation of $U_q(\g)$ in the vector space $\C^7$.
It is a deformation of the classical representation of $\g$ restricted from $\s\o(7)$.
Our realization is close to \cite{Ku}.

Let $I$ denote the set of integers from $1,\ldots, 7$. Fix a basis $\{w_i\}_{i\in I} \in  \C^7=V$ and let $e_{ij}\in \End(V)$ denote the standard matrix units, $e_{ij}w_k=\dt_{jk}w_i$, $i,j,k\in I$.
One can check that the assignment
$$
\hspace{-38pt}\begin{array}{cclccl}
q^{h_\al}&\mapsto& qe_{11}+q^{-1}e_{22}+q^{2}e_{33}+e_{44}+q^{-2}e_{55}+q^{}e_{66}+q^{-1}e_{77},
\nn\\
q^{h_\bt}&\mapsto &e_{11}+q^{3}e_{22}+q^{-3}e_{33}+e_{44}+q^{3}e_{55}+q^{-3}e_{66}+e_{77},
\end{array}
$$
$$
\begin{array}{cclccl}
e_{\al}&\mapsto &e_{12}+e_{34}+e_{45}+e_{67},
&
f_{\al}&\mapsto &e_{21}+[2]_qe_{43}+[2]_qe_{54}+e_{76},
\nn\\
e_{\bt}&\mapsto &e_{23}+e_{56},
&
f_{\bt}&\mapsto &e_{32}+e_{65}.
\end{array}
$$
is compatible with the defining relations  and extends to a homomorphism $U_q(\g)\to \End(V)$.

Up to scalar multiplies, the action of $U_q(\g_-)$ can be depicted by the graph by
\begin{center}
\begin{picture}(310,50)
\put(155,40){$\g_-$}

\put(35,20){$f_{\al}$}
\put(85,20){$f_{\bt}$}
\put(135,20){$f_{\al}$}
\put(185,20){$f_{\al}$}
\put(235,20){$f_{\bt}$}
\put(285,20){$f_{\al}$}

\put(5,0){$w_{7}$}
\put(55,0){$w_{6}$}
\put(105,0){$w_{5}$}
\put(155,0){$w_{4}$}
\put(205,0){$w_{3}$}
\put(255,0){$w_{2}$}
\put(305,0){$w_{1}$}

\put(10,15){\circle{3}}
\put(60,15){\circle{3}}
\put(110,15){\circle{3}}
\put(160,15){\circle{3}}
\put(210,15){\circle{3}}
\put(260,15){\circle{3}}
\put(310,15){\circle{3}}
\put(55,15){\vector(-1,0){40}}
\put(105,15){\vector(-1,0){40}}
\put(155,15){\vector(-1,0){40}}
\put(205,15){\vector(-1,0){40}}
\put(255,15){\vector(-1,0){40}}
\put(305,15){\vector(-1,0){40}}
\end{picture}
\end{center}
The representation of $U_q(\g_+)$  is obtained by reversing the arrows.
Let  $\nu_i \in \h^*$ denote the weight of $w_i$, then
$$
\nu_1=2\al+\bt, \quad \nu_2=\al+\bt, \quad \nu_3=\al, \quad \nu_4=0,
 \quad \nu_5=-\al,  \quad \nu_6=-\al-\bt, \quad \nu_7=-2\al-\bt.
$$
For all $\nu\in \Z \Pi$ we denote by $P(\nu)$ the set of pairs $(i,j)\in I\times I$ such that $\nu_i-\nu_j= \nu$.
For each pair with $i<j$ there is a unique element $\psi\in U_q(\g_-)$ such that $w_j=\psi w_i$. In fact, it is a monomial in
$f_\al,f_\bt$.
Let $\psi^{ij}\in U_q(\g_-)$ be the monomial obtained from it by reversing the order of factors.

\section{Quantum conjugacy classes}
\label{QCC}
In this section we describe quantum semisimple conjugacy classes along the lines of \cite{M1} and  \cite{AM1}.
The construction is based on certain facts from representation theory to be established in the subsequent sections.

We regard  elements of $\h^*$ as characters of $T$ and elements of $T$ as spectral points of $U_q(\h)$ via
the correspondence the correspondence $t\colon q^{h_\al}\mapsto \al(t)$, for all $t\in T$.
Fix $t\in T$ and its stabilizer subalgebra $\k$. Choose the weight $\la$ from the
condition $q^{2\la}=tq^{2\rho_\k-2\rho}=t_q$ regarded as an equality in $T$ upon the identification $\h\simeq \h^*$ via the
inner product.
Consider a 1-dimensional representation $\C_\la$ of $U_q(\b_+)$ extending $t_q\colon U_q(\h)\to \C$: it is implemented by the assignment $q^{\pm h_\al}\mapsto q^{\pm (\la,\al)}$, $e_\al\mapsto 0$.
Let $M_\la=U_q(\g)\tp_{U_q(\g_+)}\C_\la$ denote the Verma  $U_q(\g)$-module of highest weight $\la$ with the canonical generator  $v_\la\in M_\la$.

Due to the special choice of $\la$, it satisfies the conditions $q^{2(\la+\rho,\al)}=q^{(\al,\al)}$ for all $\al\in \Pi_\k$,
and we denote by $\Theta^\k$ the set of all such weights.
There exist singular vectors $v_{\la-\al}\in M_\la$ annihilated by  $e_\al$, $\forall\al\in \Pi$.
Up to a scalar multiplier, they can be written explicitly as $\hat f_{1i}v_\la$ with $(1,i)\in P(\al)$,
where $\hat f_{ij}$ are matrix elements of the (reduced) Shapovalov inverse. The vectors $v_{\la-\al}$ generate submodules $M_{\la-\al}\subset M_\la$. Let $M^\k_\la$ denote the quotient module $M_\la/\sum_{\al\in \Pi_\k}M_{\la-\al}$.
On transition to the $\C[[\hbar]]$-extension via $q=e^\hbar$, $M_\la^\k$ is free over $\C[[\hbar]]$, by Lemma \ref{M is free}.

Denote by $\Q=(\pi\tp \id)(\Ru_{21}\Ru)\in U_q(\g)$ a matrix with entries in the quantum group. It commutes with the coproduct of all elements from
$U_q(\g)$ and plays a key role in the theory. Its entries generate a subalgebra $\C_q[G]\subset U_q(\g)$, which is
an equivariant quantization of $\C[G]$. So $\Q$ is the matrix of "coordinate" functions on $G$. It satisfies the so called "reflection equation" rather
then the RTT-relations of the Hopf dual to $U_q(\g)$.

The operator $\Q$ is scalar on every submodule of highest weight in $V\tp M_\la^\k$ as well as a quotient module.
Its eigenvalues can be described as follows. In the classical limit, the module $V$ is completely reducible over $\k$. Let $I^\k\subset I$ denote the subset of indices of $\k$-highest vectors $w_i$.
 Then the eigenvalues are $x_j=q^{2(\la+\rho,\nu_j)+(\nu_j,\nu_j)-(\nu_1,\nu_1)+2(\rho,\nu_1)}$.
It is proved in the subsequent sections that $\Q$ is semisimple on $V\tp M_\la^\k$,  so its minimal polynomial
is
\be
\prod'_{j\in I^\k}(\Q-x_j)=0,
\label{min_pol}
\ee
where the prime means that only distinct eigenvalues count (coincidences occur only for borderline $t$).
One can check that the set $\{x_j\}_{j\in I^\k_j}$ is invariant under the shifted action of Weyl group
$\si\colon \la\mapsto \si(\la+\rho)-\rho$.

Put $\Tr_q(X)=\Tr(\pi(q^{2h_\rho})X)$ for a matrix $X$ with arbitrary entries.
Then the elements $\tau^m=\Tr_q(\Q^m)$, $m\in I$,  generate the center of $\C_q[G]$. Note
that $\tau^m$ are not independent, as the rank of $G$ is two.

Let $\chi_\la$ denote the central character of $\C_q[G]$ returning $\tau^m v_\la=\chi_\la(\tau^m)v_\la$,
\be
\label{char_V1}
\chi_\la(\tau^m)&=&
\sum_{i=1}^7 x_{\nu_i}^m
\prod_{\al\in \Rm_+}\frac{q^{(\la+\rho+\nu_i,\al)}-q^{-(\la+\rho+\nu_i,\al)}}{q^{(\la+\rho,\al)}-q^{-(\la+\rho,\al)}}
.
\ee
It is invariant under the shifted action of the Weyl group $W$ on $\h^*$.

\begin{thm}
  The image of $\C_q[G]$ in $\End(M_\la^\k)$ is an equivariant quantization of the polynomial algebra $\C[O_t]$. It is the quotient of $\C_q[G]$ by the ideal generated by the entries of the minimal polynomial over the kernel of $\chi_\la$.
  The algebra $\C_q[O_t]$ depends only on the $W$-orbit of the point $t\in T$.
\end{thm}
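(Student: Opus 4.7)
The plan is to analyse the map $\phi_\la\colon \C_q[G]\to \End(M_\la^\k)$ induced by the action of $U_q(\g)$, and show that (i) its image is an equivariant quantization of $\C[O_t]$, (ii) its kernel is exactly the ideal $\Jc_\la$ described in the statement, and (iii) both descriptions depend only on the $W$-orbit of $t$. The essential inputs from the preceding discussion are the semisimplicity of $\Q$ on $V\tp M_\la^\k$ with the explicit eigenvalues $\{x_j\}_{j\in I^\k}$, and the central character $\chi_\la$ given by (\ref{char_V1}). Throughout I would work with the formal $\C[[\hb]]$-deformation via $q=e^\hb$ and exploit the freeness of $M_\la^\k$ over $\C[[\hb]]$ supplied by Lemma \ref{M is free} in order to make classical-limit arguments rigorous.

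First I would verify that the image of $\phi_\la$ is an equivariant subalgebra of $\End(M_\la^\k)$: since $\C_q[G]$ is the locally finite part of the adjoint $U_q(\g)$-module, its image is closed under the adjoint action, which specialises to the $G$-action on $\C[G]$ at $\hb=0$. I would then compute the classical limit of $\phi_\la$ and identify it with the restriction map $\C[G]\to \C[O_t]$ attached to $t$: at $\hb=0$ the module $M_\la^\k$ has infinitesimal character corresponding to $t$, the matrix $\Q$ degenerates to the classical coordinate matrix, and the Corollary of Section \ref{secCCC} combined with the standard surjection $\C[G]\twoheadrightarrow \C[O_t]$ identifies the classical image with $\C[O_t]$. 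Flatness then promotes this identification to an equivariant isomorphism $\phi_\la(\C_q[G])\simeq \C_q[O_t]$.

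Next I would pin down the kernel. Two families of relations visibly lie in $\ker \phi_\la$: first, by semisimplicity of $\Q$ on $V\tp M_\la^\k$ and (\ref{min_pol}), all matrix entries of $\prod'_{j\in I^\k}(\Q-x_j)$ annihilate $M_\la^\k$; second, any element of $\C_q[G]$ lying in $\ker \chi_\la$ acts as zero on the highest weight vector of $M_\la$, and centrality forces it to act as zero on all of $M_\la^\k$. Let $\Jc_\la\subset \C_q[G]$ be the ideal generated by these two families; the containment $\Jc_\la\subset \ker \phi_\la$ is then immediate. The reverse containment would follow by a classical-limit dimension count: the classical counterpart of $\Jc_\la$ is generated by the entries of the minimal polynomial of the coordinate matrix modulo the maximal ideal of invariants at $t$, which is exactly the defining ideal of $O_t$ in $G$ by the Corollary of Section \ref{secCCC}. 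Hence $\C_q[G]/\Jc_\la$ is a flat deformation of $\C[O_t]$, and comparison of ranks over $\C[[\hb]]$ forces equality with $\phi_\la(\C_q[G])\simeq \C_q[O_t]$.

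Finally, for the $W$-dependence I would invoke that under the shifted Weyl action $\si\colon \la\mapsto \si(\la+\rho)-\rho$ both the eigenvalue set $\{x_j\}_{j\in I^\k}$ and the central character $\chi_\la$ are invariant, as recorded after (\ref{min_pol}) and (\ref{char_V1}); hence the ideal $\Jc_\la$ depends only on the $W$-orbit of $t$, and so does $\C_q[O_t]=\C_q[G]/\Jc_\la$. The step I expect to be the main obstacle is rigorously controlling the flatness of $\C_q[G]/\Jc_\la$ in the $\hb$-deformation: one needs to exclude any extra relations appearing at generic $q$ beyond those visible classically, and this relies crucially on the explicit semisimplicity and direct sum decomposition of $V\tp M_\la^\k$ that the subsequent sections establish.
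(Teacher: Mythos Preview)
Your proposal is correct and follows essentially the same route as the paper: pass to the $\hbar$-adic setting, use freeness of $M_\la^\k$ (Lemma \ref{M is free}) to ensure the image in $\End(M_\la^\k)$ is torsion-free and hence a flat deformation, identify the classical limit of the relations (\ref{min_pol}) and $\ker\chi_\la$ with the defining ideal of $O_t$ via the Corollary in Section \ref{secCCC}, and conclude with the shifted Weyl-invariance of $\{x_j\}$ and $\chi_\la$. The paper organizes the flatness step by first quotienting only by $\ker\chi_\la$ and then observing that the image of this quotient in the torsion-free target is automatically $\C[[\hbar]]$-free, which is exactly the mechanism you flag as the ``main obstacle'' and which is handled by the reference to \cite{M1}.
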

\begin{proof}
  The proof is similar to \cite{M1}. Denote by $\S$ the quotient of $\C_q[G]$ by the ideal generated by $\ker \chi_\la$.
  Denote by $\Tc$ the extension of $\End(M_\la^\k)$ over the Laurent series. Let $U_\hbar(\g)$ be the $\C[[\hbar]]$-extension of
  $U_q(\g)$ completed in $\hbar$-adic topology. The map $\S\to \Tc$ is $U_\hbar(\g)$-equivariant. The algebra $\S$ is a direct sum
  of $U_\hbar(\g)$-modules that are $\C[[\hbar]]$-finite (as embedded in the locally finite part of $\End(M_\la))$. Since $\Tc$ has no $\hbar$-torsion, the image of  $\S$ in $\Tc$ is $\C[[\hbar]]$-free. The relations (\ref{min_pol}) and (\ref{char_V1}) go over to the relations of $N(O_t)$,  hence
  they generate the kernel of the map $\C_q[O_t]\to \Tc$, cf. \cite{M1} for details.
  This kernel is invariant under the shifted action of $W$ on the weight $\la$, therefore,   $\C_q[O_t]$ depends on the class of $t$.
\end{proof}
The key fact the proof based upon is semisimplicity of $\Q$. To this end, it is sufficient to show that highest weight submodules exhaust all
of $V\tp M^\k_\la$. We solve a stronger problem: we establish exact criteria when $V\tp M^\k_\la$ splits into a direct sum of highest weight modules.
The rest of the paper is devoted to this analysis.
\section{Generalized parabolic Verma modules}
\label{secGVM}
Fix a weight  $\la\in \h^*$ and consider the Verma module $M_\la$.
Let ${}^*\!M_\la$ denote the Verma module of lowest weight $-\la$. There is a unique, up to a scalar multiple, $U_q(\g)$-invariant form $M_\la\tp {}^*\!M_\la\to \C$
(equivalent to the contravariant Shapovalov on $M_\la$), which is non-degenerate if and only if  $M_\la$ is irreducible.
As that is the case for generic weight, \cite{DCK}, there is a unique lift $\hat \F\in U_q(\g_+)\tp U_q(\b_-)$ of the inverse form, where
the Borel subalgebra is extended over the ring of fractions of $U_q(\h)$.
The matrix  $\hat F= (\pi\tp \id)(\hat \F)\in \End(V)\tp U_q(\b_-)$ is  expressed through a matrix $F\in \End(V)\tp U_q(\g_-)$
 whose entries $f_{ij}$ are presented in Section \ref{ShapInv}. 
 
 Put $\rho_i=(\rho,\nu_i)$,  $\tilde \rho_i=\rho_i+\frac{1}{2}||\nu_i||^2$, $i\in I$, and
define
$$
\eta_{ij}=h_{\nu_i}-h_{\nu_j}+\rho_i-\rho_j-\frac{||\nu_i-\nu_j||^2}{2}, \quad
A^j_i=\frac{q-q^{-1}}{1-q^{2\eta_{ij}}},\quad i<j.
$$
For an ascending sequence of integers  $m_1,\ldots, m_k,j$, put $f_{m_1,\ldots,m_k}=f_{m_1,m_2} \ldots  f_{m_{k-1},m_k}$ and $A^j_{m_1,\ldots,m_k}=A^j_{m_1}\ldots A^j_{m_k}.$  Then
\be
\hat f_{ij}=\sum_{i<\vec m< j}f_{i,\vec m,j}A^j_{i,\vec m}q^{\eta_{ij}-\tilde \rho_i+\tilde \rho_j}.
\label{hat_f}
\ee
where the summation is taken over all sequences $\vec m =(m_1,\ldots,m_k)$ such that $i<m_1$, $m_k<j$, including $\vec m=\varnothing$.
Finally,  $\hat f_{ii}=1$ for all $i$ and $\hat f_{ij}=0$ for $i>j$.

We regard $U_q(\h)$ as the algebra of trigonometric polynomials  on $\h^*$.
The linear isomorphism $U_q(\b_-)\simeq  U_q(\g_-)\tp  U_q(\h)$ identifies elements of $U_q(\b_-)$ with
 functions $\h^*\to  U_q(\g_-)$.

The singular vector $v_{\la-\al} \in M_\la$ can be constructed as follows.
It is known to be unique, up to a scalar factor (follows from the basics properties of reduction algebras).
Therefore it is proportional to  $\hat f_{ij}v_{\la}=\hat f_{ij}(\la)v_{\la}$ with $(i,j)\in P(\al)$, upon an appropriate regularization
of $\hat f_{ij}(\la)$ if needed.
\begin{lemma}
Suppose that $t\in T^\k$ and $\la=\frac{1}{2}\ln t_q \in \h^*$ is fixed as in
Section \ref{QCC}. For all $\al\in \Pi_\k$ choose a pair $(i,j)\in P(\al)$.
Then $\hat f_{ij}(\la)\in U_q(\g_-)$ is a deformation of a classical root vector, $f_\al\in \g_-$.
\end{lemma}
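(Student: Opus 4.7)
The plan is to analyze the singular behavior of formula (\ref{hat_f}) at the prescribed weight $\la$, regularize, and identify the classical limit with the root vector $f_\al$ by combining direct inspection with a uniqueness argument for singular vectors.

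First, I would locate the pole. For $(i,j)\in P(\al)$ we have $\nu_i-\nu_j=\al$, hence $\eta_{ij}(\la)=(\la+\rho,\al)-\tfrac{1}{2}(\al,\al)$. The hypothesis $\la\in\Theta^\k$ with $\al\in\Pi_\k$ (Section~\ref{QCC}) enforces $q^{2(\la+\rho,\al)}=q^{(\al,\al)}$, so $q^{2\eta_{ij}(\la)}=1$ and $A^j_i$ has a simple pole at $\la$, while its numerator $q-q^{-1}$ is non-zero. The intermediate factors $A^j_m(\la)$ for $i<m<j$ are regular at a generic point of $T^\k_{reg}$: their singular loci $q^{2\eta_{mj}(\la)}=1$ correspond precisely to additional vanishings that are excluded from the regular strata of Section~\ref{secCCC}; the borderline cases $T^\k_{bord}$ would be treated afterwards by specialization.

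Next I regularize. Set
\[
\hat f^{reg}_{ij}(\la):=\lim_{\la'\to\la}\bigl(1-q^{2\eta_{ij}(\la')}\bigr)\,\hat f_{ij}(\la')=(q-q^{-1})\,q^{-\tilde\rho_i+\tilde\rho_j}\sum_{i<\vec m<j}f_{i,\vec m,j}\prod_m A^j_m(\la),
\]
a non-zero element of $U_q(\g_-)$ of weight $-\al$.

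Finally I identify its classical limit. For $(i,j)=(i,i+1)$ the sum collapses to the single term $f_{i,i+1}$, and $\hat f^{reg}_{i,i+1}(\la)$ is directly proportional to it; by the construction of the matrix $F$ in Section~\ref{ShapInv}, this is a deformation of the Chevalley generator $f_\al$ (covering $\al\in\{\al,\bt\}\cap\Pi_\k$). For non-adjacent pairs, which arise when $\al\in\Pi_\k$ is a non-simple root of $\g$, I would invoke uniqueness of the singular vector in $M_\la$ at weight $\la-\al$: only the single Kac--Kazhdan condition $(\la+\rho,\al^\vee)=1$ is active at generic $\la\in\Theta^\k$, so the corresponding singular weight space is one-dimensional. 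Then $\hat f^{reg}_{ij}(\la)v_\la$ is a scalar multiple of the unique singular vector, and comparison with the adjacent case inside the rank-one subalgebra generated by the $\al$-root vectors of $\k$ identifies the $\hbar\to 0$ leading term of $\hat f^{reg}_{ij}(\la)$ with $f_\al$ up to scalar.

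The main obstacle is this last step for non-adjacent pairs: after regularization, the sum over $\vec m$ contains several products $f_{i,\vec m,j}$ whose PBW expansions in $U(\g_-)$ span more than the root space $\C f_\al$. Showing that in the classical limit all contributions to $U(\g_-)_{-\al}$ outside $\C\cdot f_\al$ cancel requires either an explicit case-by-case computation---manageable since $V$ is seven-dimensional so $P(\al)$ has few elements---or an inductive argument using singular-vector uniqueness together with the recursive structure of the matrix entries $f_{ab}$ from Section~\ref{ShapInv}.
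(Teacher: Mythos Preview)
Your regularization and the adjacent case are fine, but the route you propose for non-adjacent pairs---uniqueness of the singular vector plus comparison with the rank-one subalgebra for $\al$---does not close, and the paper takes a different, more direct path.

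The gap is this: for $\al\in\Pi_\k$ not simple in $\g$ and a \emph{fixed} classical weight $\la$ on the Kac--Kazhdan hyperplane, the singular vector of $M_\la$ at weight $\la-\al$ is \emph{not} $f_\al v_\la$. Indeed, if $\gamma\in\Pi$ satisfies $\al-\gamma\in \Rm^+$, then $e_\gamma f_\al v_\la=[e_\gamma,f_\al]v_\la$ is a non-zero multiple of $f_{\al-\gamma}v_\la$. So knowing that $\hat f^{\,reg}_{ij}(\la)v_\la$ is the unique singular vector, and then looking inside the $\s\l(2)$-triple for $\al$, cannot by itself force the $\hbar\to 0$ leading term to be $f_\al$; for a generic finite $\la$ the classical singular vector is a genuinely different element of $U(\g_-)_{-\al}$.

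The paper's one-line proof exploits instead the specific $\hbar$-dependence of $\la$. From $q^{2\la}=t\,q^{2\rho_\k-2\rho}$ one gets $q^{2\eta_{mj}(\la)}=(\nu_m-\nu_j)(t)\cdot q^{N_m}$ with $N_m\in\Z$, so each intermediate factor $A^j_m(\la)$ is $O(\hbar)$ unless the character $\nu_m-\nu_j$ is trivial on $t$. Were a term with non-empty $\vec m$ to survive at $\hbar=0$, every consecutive difference $\gamma_l=\nu_{m_{l-1}}-\nu_{m_l}$ would have to be a root (otherwise the classical limit of the corresponding $f$-factor, a nested commutator landing in a zero root space, vanishes) with $\gamma_l(t)=1$, hence $\gamma_l\in \Rm^+_\k$. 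The combinatorial fact invoked from \cite{AM1}, Lemma~2.2, is precisely that no $\al\in\Pi_\k$ admits a non-trivial decomposition into positive roots lying in $\Rm^+_\k$; this kills every term with $\vec m\neq\varnothing$, leaving only $f_{ij}$, whose classical limit is the root vector $f_\al$. Your case-by-case fallback would eventually verify the same vanishing, but the organizing principle is this root-system lemma, not a singular-vector comparison.
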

\begin{proof}
  The proof is based on the fact that in any presentation of $\al$ as a sum of positive roots the
  summands do not belong to $R^+_\k$, see e.g. \cite{AM1}, Lemma 2.2.
\end{proof}
\begin{corollary}
\label{M is free}
  The $\C[[\hbar]]$-extension of $M_\la^\k$ is $\C[[\hbar]]$-free.
\end{corollary}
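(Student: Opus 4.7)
The plan is to identify the submodule being quotiented out, then deduce freeness by combining a PBW argument for $U_\hbar(\g_-)$ with the $\hbar$-adic Nakayama lemma. The goal is to show that $M_\la^\k$ is a flat $\C[[\hbar]]$-deformation of its classical analogue, which has the expected vector-space size.

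First I would observe that each singular vector $v_{\la-\al}=\hat f_{ij}(\la)v_\la$ is annihilated by $U_\hbar(\g_+)$, while $U_\hbar(\h)$ acts on it by the character of weight $\la-\al$; consequently the Verma submodule $M_{\la-\al}\subset M_\la$ reduces to $U_\hbar(\g_-)\cdot \hat f_{ij}(\la)v_\la$. Writing $\hat f_\al(\la)$ for the chosen $\hat f_{ij}(\la)$, one obtains
\[
N:=\sum_{\al\in\Pi_\k}M_{\la-\al}=J\cdot v_\la,\qquad J:=\sum_{\al\in\Pi_\k}U_\hbar(\g_-)\hat f_\al(\la).
\]
Since $M_\la$ is $\C[[\hbar]]$-free via PBW, freeness of $M_\la^\k=M_\la/N$ reduces to purity of $N$ inside $M_\la$, equivalently to flatness of the left ideal $J$ over $\C[[\hbar]]$.

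Next, fix a convex order on $\Rm^+$ placing the roots of $\Rm^+_\k$ last, and the corresponding PBW monomials $\{\xi_\nu\}$ of $U_\hbar(\g_-)$, with classical reductions $\{\xi^{\mathrm{cl}}_\nu\}$. Split multi-indices as $\nu=(\nu',\nu'')$ according as they record exponents on roots outside or inside $\Rm^+_\k$. By the preceding Lemma, $\hat f_\al(\la)\equiv f_\al\pmod{\hbar}$. Since $\k_-$ is a nilpotent Lie algebra generated by $\{f_\al\}_{\al\in\Pi_\k}$, its augmentation ideal $U(\k_-)_+$ equals $\sum_\al U(\k_-)f_\al$; the classical PBW decomposition $U(\g_-)\simeq U(\n_\k^-)\otimes U(\k_-)$ then gives $J|_{\hbar=0}=U(\n_\k^-)\otimes U(\k_-)_+$, yielding a $\C$-linear identification $M_\la^\k|_{\hbar=0}\simeq\bigoplus_{\nu''=0}\C\cdot\xi^{\mathrm{cl}}_{\nu'}v_\la$. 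The $\hbar$-adic Nakayama lemma then promotes this classical basis to a $\C[[\hbar]]$-basis of $M_\la^\k$.

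The main obstacle is the flatness of $J$ itself: one must exclude relations $\sum_\al u_\al\hat f_\al(\la)\in \hbar U_\hbar(\g_-)$ with $\sum_\al\bar u_\al f_\al\neq 0$ in $U(\g_-)$. I would attack this by comparing leading PBW terms: since $\hat f_\al(\la)$ has leading PBW term $f_\al$, with the $\Rm^+_\k$-roots placed last in the order, right-multiplication by $\hat f_\al(\la)$ is a $\C[[\hbar]]$-linear operator on PBW monomials whose classical limit is right-multiplication by $f_\al$. Matching the classical graded dimension of $J_0$ against that of $J/\hbar J$ in every PBW weight forces the two to coincide, which is precisely the flatness needed. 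For the pseudo-Levi cases $\#\Pi_\k=2$, where $\n_\k^-$ is only a vector-space complement to $\k_-$ rather than a Lie subalgebra, the PBW argument still delivers a vector-space basis of the quotient indexed by monomials in roots outside $\Rm^+_\k$, so the conclusion persists uniformly over all types of $\k$.
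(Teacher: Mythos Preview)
The paper's own proof is a bare citation to \cite{M3}, Proposition~6.2, so there is nothing in-paper to compare line by line. Your outline---use the preceding Lemma to see that each $\hat f_\al(\la)$ reduces to the classical root vector $f_\al$ modulo $\hbar$, identify the classical quotient via PBW, then lift by $\hbar$-adic Nakayama---is the standard route for such freeness statements and is almost certainly what the cited reference does. For Levi $\k$ (including the quasi-Levi rank-one cases) your argument is essentially complete: either $\Pi_\k\subset\Pi$ and quantum PBW with a convex order ending in $\Rm^+_\k$ gives the factorization $U_\hbar(\g_-)\simeq U_\hbar(\n_\k^-)\otimes U_\hbar(\k_-)$ directly, or $\#\Pi_\k=1$ and a short domain argument shows $J=U_\hbar(\g_-)\hat f_\al(\la)$ is saturated.

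There is, however, a genuine gap in the pseudo-Levi rank-two cases. Your attack on flatness relies on a PBW order ``with the $\Rm^+_\k$-roots placed last''. Classically any order is admissible, but the quantum PBW basis requires a \emph{convex} order, and for $G_2$ there are only two (mutually reverse) convex orders on $\Rm^+$; in neither of them do, say, $\al$ and $3\al+2\bt$ (or $\bt,3\al+\bt,3\al+2\bt$) sit together at one end. So the phrase ``$\hat f_\al(\la)$ has leading PBW term $f_\al$ with $\Rm^+_\k$ last'' is unavailable in the quantum setting. Your concluding sentence then asserts that ``the PBW argument still delivers a vector-space basis of the quotient'', but that only controls $J_0$, not the saturation of $J$ inside $U_\hbar(\g_-)$: the claim ``matching the classical graded dimension of $J_0$ against that of $J/\hbar J$ forces the two to coincide'' is exactly the statement to be proved. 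What is missing is a verification that every classical syzygy among the $f_\al$, $\al\in\Pi_\k$, lifts to a relation among the $\hat f_\al(\la)$---for the orthogonal pairs $\Pi_\k$ occurring here this amounts to checking that $\hat f_{\al_1}(\la)\hat f_{\al_2}(\la)-\hat f_{\al_2}(\la)\hat f_{\al_1}(\la)$ already lies in $J$ (not merely in $\hbar U_\hbar(\g_-)$). That step, or an equivalent character bound over $\C((\hbar))$, is the substantive content behind the reference to \cite{M3}.
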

\begin{proof}
  The proof is similar to \cite{M3}, Proposition 6.2.
\end{proof}
\subsection{Standard filtration of $V\tp M_\la$}
Define $ \Vc_j\subset V\tp M_\la$, $j\in I$, to be the submodule generated by $\{w_i\tp v_\la\}_{i=1}^j$. They form an ascending filtration $\Vc_\bullet$
of $V\tp M_\la$, which we call standard.
Its  graded module $\gr \Vc_\bullet$ is the direct sum $\oplus_{i=1}^j\gr \Vc_j$, where $\gr \Vc_j=\Vc_j/\Vc_{j-1}$ is isomorphic to $M_{\la+\nu_j}$ for
all $\la$ (the proof is similar to  \cite{BGG} for classical $U(\g)$). It is generated by the image $w^j_\la$ of $w_j\tp v_\la$ in $\gr \Vc_j$.
\begin{propn}
\label{principal_diagram1}
Suppose $(i,j)\in P(\bt)$, and $\psi$ is a Chevalley monomial of weight $-\bt$.
If $\psi \not \simeq \psi^{ij}$, then $w_i\tp \psi v_\la \in  \Vc_{j-1}$.
Otherwise,
\be
w_i\tp \psi^{ij}v_\la &\simeq &w_{j}\tp v_\la \mod  \Vc_{j-1}.
\ee
\end{propn}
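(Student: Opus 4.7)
My plan is to induct on the height of $\bt$. Before entering the induction, I would note that a character computation identifies the dimensions of the $(\la+\nu_j)$-weight spaces of $V\tp M_\la$ and of $\Vc_j$ (both equal $\sum_{k\le j}\dim U_q(\g_-)_{\nu_k-\nu_j}$, using $\gr\Vc_k=M_{\la+\nu_k}$), so $w_i\tp\psi v_\la$ automatically lies in $\Vc_j$; the question is only whether its image in the one-dimensional top weight space of $\gr\Vc_j=M_{\la+\nu_j}$ vanishes or equals a nonzero multiple of $w_j\tp v_\la$ modulo $\Vc_{j-1}$.

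For the base case ($\bt$ a simple root), the unique monomial $\psi=f_\bt$ equals $\psi^{ij}$, and I would expand $\Delta(f_\bt)(w_i\tp v_\la)=w_i\tp f_\bt v_\la+f_\bt w_i\tp q^{-h_\bt}v_\la$, using that the left side lies in $\Vc_i\subseteq\Vc_{j-1}$ to deduce $w_i\tp f_\bt v_\la\equiv-q^{-(\la,\bt)}(f_\bt w_i)\tp v_\la\pmod{\Vc_{j-1}}$, a nonzero multiple of $w_j\tp v_\la$ since $f_\bt w_i\simeq w_j$.

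For the inductive step, I would write $\psi=f_{\al_1}\cdots f_{\al_n}$ and expand
\[
\Delta(\psi)(w_i\tp v_\la)=\sum_{S\subseteq\{1,\ldots,n\}}c_S(\la)\,\psi_S w_i\tp\psi_{S^c}v_\la,
\]
where $\psi_S$ denotes the sub-monomial indexed by $S$ (in the original order) and $c_S(\la)$ collects the $q$-factors from commuting the $q^{-h_{\al_k}}$ past root vectors. Since the left side lies in $\Vc_i\subseteq\Vc_{j-1}$, rearranging gives
\[
w_i\tp\psi v_\la\equiv-\sum_{\varnothing\ne S}c_S(\la)\,\psi_S w_i\tp\psi_{S^c}v_\la\pmod{\Vc_{j-1}}.
\]
A weight argument shows that each non-vanishing $\psi_S w_i$ is proportional to some $w_{j_S}$ with $i<j_S\le j$, equality $j_S=j$ occurring only for $S=\{1,\ldots,n\}$ with $\psi w_i\ne 0$. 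For $j_S<j$, the inductive hypothesis applied to the pair $(j_S,j)$ and the shorter monomial $\psi_{S^c}$ replaces $w_{j_S}\tp\psi_{S^c}v_\la\bmod\Vc_{j-1}$ either by zero (if $\psi_{S^c}\ne\psi^{j_S,j}$) or by a nonzero scalar multiple of $w_j\tp v_\la$ (if $\psi_{S^c}=\psi^{j_S,j}$). Summing, $w_i\tp\psi v_\la\equiv C(\psi)\,w_j\tp v_\la\pmod{\Vc_{j-1}}$ for some scalar $C(\psi)$.

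The main obstacle will be the combinatorial identity $C(\psi)\ne 0\Leftrightarrow\psi=\psi^{ij}$: the contributions from admissible splittings $S$ (those with $\psi_S w_i\ne 0$ and $\psi_{S^c}=\psi^{j_S,j}$, together with the full $S$ when $\psi w_i\simeq w_j$) must cancel for $\psi\ne\psi^{ij}$ but survive for $\psi=\psi^{ij}$. I would verify this by explicit tracking of the scalars $c_S(\la)$ alongside the action constants of $f_\al,f_\bt$ on $V$ given in Section \ref{Rep}. The crucial simplification is that the weight diagram of $V$ is a linear chain, so the Chevalley path from $w_i$ to $w_j$ is unique and the set of admissible $S$ remains small. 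A representative illustration is $(i,j)=(1,3)$: the two weight $-(\al+\bt)$ monomials $f_\al f_\bt$ and $f_\bt f_\al$ produce the required cancellation through an elementary $q$-identity built on $f_\bt w_2=[3]_q w_3$.
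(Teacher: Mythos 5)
Your reduction is set up correctly: the coproduct expansion $\Delta(\psi)(w_i\tp v_\la)=\sum_S c_S(\la)\,\psi_S w_i\tp\psi_{S^c}v_\la\in\Vc_i\subseteq\Vc_{j-1}$, the character count placing the whole weight space $(V\tp M_\la)[\la+\nu_j]$ inside $\Vc_j$, and the appeal to the inductive hypothesis for the pairs $(j_S,j)$ are all sound. But the write-up stops exactly where the proposition lives: the asserted equivalence $C(\psi)\neq0\Leftrightarrow\psi\simeq\psi^{ij}$ \emph{is} the statement to be proved, and you only check it for $(i,j)=(1,3)$, deferring the general case to "explicit tracking of the scalars". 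That tracking is not routine: already in your illustrative case two splittings ($S=\{2\}$ and $S=\{1,2\}$) must cancel against each other; in general several admissible $S$ contribute, the coefficients $c_S(\la)$ are $\la$-dependent, and the scalars supplied by the inductive hypothesis are themselves only determined recursively. Moreover, by the paper's convention $\simeq$ demands a coefficient that never vanishes as a function of $\la$, so in your scheme one must also rule out accidental vanishing of the $\la$-dependent sum $C(\psi)$ in the case $\psi\simeq\psi^{ij}$; this is nowhere addressed. As written, the argument therefore has a genuine gap at its central combinatorial step.

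The gap can be closed by organizing the induction so that no cancellation ever arises: peel off only the leftmost Chevalley factor, $\psi=f_\mu\psi'$, and write $w_i\tp f_\mu\psi'v_\la=f_\mu\bigl(w_i\tp\psi'v_\la\bigr)-q^{c(\la)}\,(f_\mu w_i)\tp\psi'v_\la$ with $q^{c(\la)}\neq0$. Since the weights $\nu_1>\dots>\nu_7$ of $V$ are totally ordered along the chain, the weight $\la+\nu_j+\mu$ of $w_i\tp\psi'v_\la$ occurs in $\gr\Vc_k\simeq M_{\la+\nu_k}$ only for $k<j$, so the entire weight space, and hence the first term, lies in $\Vc_{j-1}$. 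The second term either vanishes, namely when $f_\mu$ is not the label of the edge leaving $w_i$ — which is exactly the case where the leading factor of $\psi$ disagrees with that of $\psi^{ij}$, since $\psi^{ij}$ is the \emph{reversed} path monomial — or is proportional, with nonvanishing factor, to $w_{i+1}\tp\psi'v_\la$ with $(i+1,j)\in P(\bt-\mu)$, to which induction on the height of $\bt$ applies; and $\psi\simeq\psi^{ij}$ precisely when $f_\mu$ matches the first edge from $w_i$ and $\psi'\simeq\psi^{i+1,j}$. This single-term recursion yields the dichotomy and the nonvanishing of the coefficient at once, and is the kind of argument the paper points to via \cite{AM1}, Proposition 3.5, avoiding your unproven cancellation identity entirely.
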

\begin{proof}
  The proof is similar to \cite{AM1}, Proposition 3.5.
\end{proof}

Assuming $\la\in \Theta^\k$ denote by $\Vc_j^\k$ the image of $\Vc_j$ under the projection $V\tp  M_\la\to V\tp  M_\la^\k$.
Clearly the sequence $\Vc_\bullet^\k=(\Vc_j^\k)$ forms an ascending filtration of $V\tp  M_\la^\k$.
Denote by $\bar I^\k$ the complement of $I^\k$ in $I$. Then $j\in \bar I^\k$ if and only if there is $i<j$ such that $\nu_i-\nu_j\in \Pi_\k$.

\begin{propn}
   The graded module  $\gr \Vc_\bullet^\k$ of the filtration $\Vc_\bullet^\k$ is isomorphic to $\op_{j\in I^\k}\gr \Vc_j^\k$.
\end{propn}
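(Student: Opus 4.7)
The plan is to prove that $\gr \Vc_j^\k = \Vc_j^\k/\Vc_{j-1}^\k$ vanishes for every $j \in \bar I^\k$; since the graded object of any filtration is automatically the direct sum of its graded pieces, this immediately yields the asserted isomorphism $\gr \Vc_\bullet^\k \simeq \op_{j \in I^\k} \gr \Vc_j^\k$. Because $\Vc_j^\k$ is generated over $U_q(\g)$ by $\Vc_{j-1}^\k$ together with the image $w_j \otimes v_\la^\k$ of $w_j \otimes v_\la$, it is enough to show $w_j \otimes v_\la^\k \in \Vc_{j-1}^\k$ for each $j \in \bar I^\k$.

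Fix such $j$ and, using the definition of $\bar I^\k$, select $i < j$ with $\al := \nu_i - \nu_j \in \Pi_\k$. By the preceding lemma, $\hat f_{ij}(\la)\,v_\la$ is proportional to the singular vector $v_{\la-\al}$ generating $M_{\la-\al} \subset M_\la$, and is therefore killed by the projection onto $M_\la^\k$. Expanding $\hat f_{ij}(\la)$ as a linear combination of Chevalley monomials of weight $-\al$,
$$
\hat f_{ij}(\la) \;=\; \sum_{\psi} c_\psi(\la)\,\psi,
$$
and applying Proposition \ref{principal_diagram1} term-by-term to $w_i \otimes \hat f_{ij}(\la)\,v_\la$, all summands with $\psi \not\simeq \psi^{ij}$ fall into $\Vc_{j-1}$, while the $\psi^{ij}$-summand contributes $c_{\psi^{ij}}(\la)\,w_j \otimes v_\la$ modulo $\Vc_{j-1}$. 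Projecting this identity to $V \otimes M_\la^\k$ and invoking the vanishing $\hat f_{ij}(\la)\,v_\la^\k = 0$ forces $c_{\psi^{ij}}(\la)\,w_j \otimes v_\la^\k \in \Vc_{j-1}^\k$.

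The main obstacle is verifying $c_{\psi^{ij}}(\la) \neq 0$ at $\la \in \Theta^\k$. This coefficient is assembled from the unique chain $\vec m$ in (\ref{hat_f}) that contributes $\psi^{ij}$ once the matrix entries $f_{a,b}$ are re-expanded in Chevalley form; it is a specific product of $A^j_{i,\vec m}$ factors and a $q$-power $q^{\eta_{ij} - \tilde\rho_i + \tilde\rho_j}$, all evaluated at $\la$. The defining condition $q^{2(\la+\rho,\al)} = q^{(\al,\al)}$ of $\Theta^\k$ is precisely the one that keeps the relevant denominators finite and the leading coefficient non-vanishing, as already guaranteed by the preceding lemma asserting that $\hat f_{ij}(\la)$ is a well-defined $q$-deformation of a classical root vector. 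The argument closely parallels \cite{AM1}, Proposition 3.5, and I expect only routine modifications to carry it over to the $G_2$ setting.
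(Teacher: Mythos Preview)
Your overall strategy matches the paper's: both arguments reduce to showing $\Vc_j^\k=\Vc_{j-1}^\k$ for $j\in\bar I^\k$ by invoking Proposition~\ref{principal_diagram1} to identify $w_j\tp v_\la$ modulo $\Vc_{j-1}$ with a distinguished monomial applied to $v_\la$, and then absorbing that monomial into the submodule $M=\sum_{\al\in\Pi_\k}M_{\la-\al}$ modulo the remaining monomials. The paper phrases this through the pair $(1,j)$ and the condition $\psi^{1j}v_\la\in M[\la-\bt]\bmod \Nc_\bt v_\la$; you work instead with the pair $(i,j)$ where $\nu_i-\nu_j=\al\in\Pi_\k$ and with the singular vector $\hat f_{ij}(\la)v_\la$ itself. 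These are equivalent packagings of the same idea.

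The genuine gap is your justification of $c_{\psi^{ij}}(\la)\neq 0$. Two problems. First, the claim that a \emph{unique} chain $\vec m$ in (\ref{hat_f}) contributes $\psi^{ij}$ is false: once the entries $f_{a,b}$ with $b>a+1$ are expanded as $q$-commutators into Chevalley monomials, several chains feed the $\psi^{ij}$-coefficient and their contributions can cancel. Second, the preceding lemma only says that the (regularized) $\hat f_{ij}(\la)$ is a well-defined deformation of the classical root vector $f_\al$; this does not prevent one particular monomial coefficient from vanishing at special $\la\in\Theta^\k$. Concretely, take $\Pi_\k=\{\al+\bt\}$ and $(i,j)=(1,3)$: after dividing out the overall factor $A^3_1$, the coefficient of $\psi^{13}=f_\al f_\bt$ equals $-q^3+[3]_qA^3_2(\la)$, which is zero when $q^{2(\la,\bt)}=q^{-6}$, a point of $\Theta^\k$. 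So the appeal to the lemma does not close the argument. The paper is equally terse at exactly this step (it asserts the equivalence with $j\in\bar I^\k$ without further detail), so you are not missing an idea the paper supplies; but the specific reasons you give for non-vanishing are incorrect and would need to be replaced by a direct analysis.
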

\begin{proof}
 Fix $j$ and put and $\bt=\nu_1-\nu_j$.
 The module $\gr \Vc_j^\k$ is a quotient of $\Vc_j$ by the submodule $\Vc_{j-1}+(V\tp M)\cap \Vc_j$, where $M=\sum_{\al\in \Pi_\k}M_{\la-\al}$.
 By Proposition \ref{principal_diagram1}, its subspace of weight $\nu_j+\la$ is isomorphic to  the quotient
 of $w_1\tp M_\la[\la-\bt]$ by $w_1\tp \Nc_\bt v_\la+w_1\tp M[\la-\bt]$, where $\Nc_\bt\subset U_q(\g_-)$ is spanned by  Chevalley monomials of weight $-\bt$
  that are not proportional to $\psi^{1j}$.
 Then $M_\la[\la-\bt]=\Nc_\bt v_\la+M[\la-\bt]$ if and only if $\psi^{1j}v_\la\in M[\la-\bt]\mod \Nc_\bt v_\la$, which is equivalent to $j\in \bar I^\k$. Otherwise $\gr \Vc_j^\k[\la-\bt]$ is spanned by the image of $w_1\tp \psi^{1j}v_\la\simeq w_j\tp v_\la$, the generator of $\gr \Vc_j^\k$.
 This proves that $\Vc_j^\k=\Vc_{j-1}^\k$ for each $j\in \bar I^\k$ and $\Vc_j^\k/\Vc_{j-1}^\k\not =\{0\}$ for $j\in I^\k$.
 \end{proof}

Let $M_j\subset V\tp M_\la$ denote the submodule of highest weight $\la+\nu_j$  and let  $u_j$ be its highest weight generator.
Furthermore, consider $M_\la^\k$ for $\la\in \Theta^\k$ and let $\pi^\k_V$ denote the projection $V\tp M_\la\to V\tp M_\la^\k$.
Define $M_j^\k=\pi^\k_V(M_j)^\k$ and $\Wc^\k_j=\sum_{i=1}^7 M_j^\k$. The ascending sequence $\Wc_\bullet^\k=(\Wc_j^\k)$, $j=1,\ldots,7$, of submodules is also
invariant under the action of $\Q$, which
is semisimple on $W_7^\k$. Semi-simplicity of $\Q$ is important for our studies, so the question is when $\Wc_7^\k=V\tp M^\k_\la$ or, more specifically, $\Wc_7^\k=\op_{j\in I^\k} M^\k_j$. We answer this question by comparing $\Wc_\bullet^\k$ against  $\Vc_\bullet^\k$.
First of all, observe that  $\Wc^\k_j\subset \Vc^\k_j$, by Proposition \ref{principal_diagram1}.
\begin{propn}
\label{W=V}
Suppose that $\k$ is Levi and fix $j\in I^\k$.
Then the following statements are equivalent:
{\em i)}  $\Vc_j^\k= \Wc_j^\k$,  {\em ii)}  $\Vc_i^\k=\Wc_i^\k$ all $i\leqslant j$,
{\em iii)} projection $\wp_i^\k\colon V\tp M^\k_\la\to \gr \Vc_i^\k$ is an isomorphism on $M_i^\k$ for all $i\leqslant j$,
 {\em iv)} $\Wc_j^\k=\op_{i=1}^j M_i^\k$.
\end{propn}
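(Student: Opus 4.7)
The plan is to reduce all four conditions to the atomic isomorphism in (iii), which controls the kernel of the canonical surjection $\wp_i^\k \colon M_i^\k \twoheadrightarrow \gr \Vc_i^\k$ arising from the inclusion $M_i^\k \subset \Vc_i^\k$ composed with the filtration quotient. First I would collect three preliminaries used throughout: (a) $M_i \subset \Vc_i$, because $u_i \equiv w_i \tp v_\la \pmod{\Vc_{i-1}}$ by Proposition~\ref{principal_diagram1} (the $(\la+\nu_i)$-weight space of $(V \tp M_\la)/\Vc_{i-1}$ is one-dimensional and generated by $[w_i \tp v_\la]$), so $u_i \in \Vc_{i-1} + w_i \tp v_\la \subset \Vc_i$; this passes to $M_i^\k \subset \Vc_i^\k$. (b) The induced map $\wp_i^\k$ is surjective for $i \in I^\k$, sending $u_i^\k$ onto the generator of $\gr \Vc_i^\k$. (c) Consequently $\Vc_i^\k = \Vc_{i-1}^\k + M_i^\k$ for every $i \in I^\k$.

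With these in hand I would carry out the cycle (ii) $\Rightarrow$ (i) $\Rightarrow$ (iv) $\Rightarrow$ (iii) $\Rightarrow$ (ii). The first implication is trivial. For (iii) $\Rightarrow$ (ii), induct on $i$: the base $\Vc_1^\k = M_1^\k = \Wc_1^\k$ is immediate because $w_1 \tp v_\la$ is itself a singular vector, and the step combines $\Vc_{i-1}^\k = \Wc_{i-1}^\k$ with (c) and the isomorphism $\wp_i^\k$ to yield $\Wc_i^\k = \Vc_i^\k$. For (iii) $\Rightarrow$ (iv), the standard largest-index argument: any relation $\sum m_l = 0$ with $m_l \in M_l^\k$ and maximal nonzero $m_{l^*}$ gives $m_{l^*} = -\sum_{l<l^*} m_l \in \sum_{l<l^*} M_l^\k \subset \Vc_{l^*-1}^\k$, so $m_{l^*} \in M_{l^*}^\k \cap \Vc_{l^*-1}^\k = \ker \wp_{l^*}^\k$, which vanishes by (iii), a contradiction.

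The remaining step (iv) $\Rightarrow$ (i), together with a confirmation of (iii), follows from a character sandwich. From the universal inequalities $\Char \Wc_j^\k \leq \sum_i \Char M_i^\k$, $\Char \Vc_j^\k = \sum_i \Char \gr \Vc_i^\k$, $\Char M_i^\k \geq \Char \gr \Vc_i^\k$, and $\Wc_j^\k \subset \Vc_j^\k$, condition (iv) replaces the first $\leq$ by an equality, and the chain
\[
\sum_{i \leq j} \Char M_i^\k \;=\; \Char \Wc_j^\k \;\leq\; \Char \Vc_j^\k \;=\; \sum_{i \leq j} \Char \gr \Vc_i^\k \;\leq\; \sum_{i \leq j} \Char M_i^\k
\]
collapses to equalities throughout, yielding (i) and termwise $\Char M_i^\k = \Char \gr \Vc_i^\k$, i.e.\ (iii).

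The main obstacle is closing the cycle via (i) $\Rightarrow$ (iv), since (i) alone only gives $\Char \Wc_j^\k = \sum \Char \gr \Vc_i^\k$ and does not a priori force agreement with $\sum \Char M_i^\k$. To bridge this I would exploit the Levi assumption on $\k$ to establish the reverse a priori bound $\Char M_i^\k \leq \Char M_{\la+\nu_i}^\k = \Char \gr \Vc_i^\k$: one verifies that the singular vector $u_i^\k$ satisfies the parabolic Verma relations at weight $\la + \nu_i$, realising $M_i^\k$ as a quotient of the parabolic Verma $M_{\la+\nu_i}^\k$. Combined with the surjectivity of $\wp_i^\k$, this forces termwise equality of characters and hence (iii), from which the rest of the cycle closes. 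The verification of these parabolic relations for $u_i^\k$ is the technical heart of the argument; it uses the $\k$-highest-weight property of $w_i$ for $i \in I^\k$ together with the parabolic relations already imposed on $v_\la^\k$, and it is precisely the place where the Levi hypothesis on $\k$ is essential.
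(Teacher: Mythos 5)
Your reductions (iii) $\Rightarrow$ (ii) and (iii) $\Rightarrow$ (iv) are fine, but the argument does not close, and the failure is at the two places where you imported unproved structural facts. First, your preliminary (b) (and hence (c)) is false as an unconditional statement: the restriction of $\wp_i^\k$ to $M_i^\k$ need not be surjective --- its image is generated by the image of $u_i^\k$, which is $D^\k_i$ times the generator of $\gr\Vc_i^\k$ and vanishes for special $\la\in\Theta^\k$. Indeed, if (b) held for all $\la$, conditions (i)--(iv) would hold identically, contradicting the later ``if and only if'' criteria (e.g.\ $q\neq x^{\pm1}$ for $\k_s$); so surjectivity of $\wp_i^\k$ is essentially the property being characterized, and your closing step for (i) $\Rightarrow$ (iv), which explicitly invokes ``the surjectivity of $\wp_i^\k$'', begs the question. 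Second, your ``universal inequality'' $\Char M_i^\k\geqslant \Char \gr\Vc_i^\k$ is not universal, and the Levi input you propose to verify gives only the opposite bound ($M_i^\k$ a quotient of the parabolic Verma module, $\gr\Vc_i^\k$ equal to it); with both inequalities pointing the same way the character sandwich for (iv) $\Rightarrow$ (i), (iii) collapses. What is actually needed is the equality of the two characters, i.e.\ the paper's key lemma that \emph{both} $M_i^\k$ and $\gr\Vc_i^\k$ are parabolically induced from the same $U_q(\k)$-module, so that the canonical map $M_i^\k\to\gr\Vc_i^\k$ is either zero or an isomorphism; the nontrivial half --- that $M_i^\k$ is the full parabolic Verma module and not a proper quotient --- is exactly what your proposal leaves unproved. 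Even granting termwise character equality, ``hence (iii)'' is a further jump: equal characters do not exclude the map being zero, and one must first secure surjectivity (a short downward induction starting from $j$).

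The implication out of (i), which your cycle needs and never soundly establishes, is handled in the paper by an elementary weight-space count that you should adopt. If (ii) fails, let $k\leqslant j$ be minimal with $\Vc_k^\k\neq\Wc_k^\k$; since the $(\la+\nu_k)$-weight space of $\gr\Vc_k^\k$ is one-dimensional and generates it, the image of $M_k^\k$ in $\gr\Vc_k^\k$ is either everything (forcing $\Wc_k^\k=\Vc_{k-1}^\k+M_k^\k=\Vc_k^\k$, a contradiction) or zero, in which case $\dim\Vc_k^\k[\la+\nu_k]=\dim\Wc_k^\k[\la+\nu_k]+1$. Because every $M_l^\k$ and every $\gr\Vc_l^\k$ with $l>k$ has highest weight $\la+\nu_l<\la+\nu_k$, neither side gains vectors of weight $\la+\nu_k$ as $l$ grows, so the deficit persists for all $l\geqslant k$ and contradicts (i) at $l=j$. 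With this and the zero-or-isomorphism dichotomy in hand, all four equivalences follow by the filtration arguments you already have (your (iii) $\Rightarrow$ (ii), (iii) $\Rightarrow$ (iv), and the use of Proposition \ref{principal_diagram1} for $M_i^\k\subset\Vc_i^\k$ are correct as written); without them, the proposal has a genuine gap at (i) $\Rightarrow$ (iv) and at the character comparison underlying (iv) $\Rightarrow$ (i), (iii).
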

\begin{proof}
  It can be proved that, for  Levi $\k$, both $M_j^\k$ and $\gr \Vc_j^\k$ are parabolically induced from
  the same $U_q(\k)$-module.
  Hence the map $M_j^\k\to \gr \Vc_j^\k$ is epimorphism and isomorphism simultaneously unless it is zero.
    
  The implication ii) $\Rightarrow$ i) is trivial.
With $\Wc^\k_1=\Vc^\k_1$, assume that ii) is violated and let $k> 1$ be the smallest such that $\Wc_k^\k\not = \Vc_k^\k$.
Comparison of weight subspaces gives $\dim \Vc_j^\k[\la+\ve_j]=\dim \Wc_j^\k[\la+\ve_j]+1$ for all $j\geqslant k$, so  i) $\Rightarrow$ ii).
Assuming  ii) we find that all maps $M_i^\k\to \gr \Vc_i^\k$ are surjective and therefore injective; hence  iv). 
Conversely, iv) implies that all maps $\Wc_i^\k\to \gr \Vc_i^\k$ are surjective. Since, $\Wc^\k_1=\Vc_1^\k$, 
 induction on $i$ then proves ii). Furthermore, iv)  implies that $M_i^\k\to \gr \Vc_i^\k$ are isomorphisms, and then $M_i^\k\cap \Wc_{i-1}^\k\subset M_i\cap \Vc_{i-1}^\k =\{0\}$, which proves  iii). Finally, induction on $i$ yields iii) $\Rightarrow$ ii).
\end{proof}
A direct sum decomposition $V\tp M_\la^\k=\op_jM_j^\k$ implies that the operator $\Q$ is semisimple on $V\tp M_\la^\k$.
More generally, $\Q$ is semisimple if $V\tp M_\la=W_7^\k$. That is the case if all maps $\wp_j^\k\colon M_j^\k\to \gr \Vc_j^\k$ are onto,
i.e. the generators of $M_j^\k$ are not killed by $\wp_j^\k$.

\section{Module structure of $V\tp M_\la$}
\label{secModStrH}
In this section, where work out exact criteria for decomposition
of $\C\tp M_\la$ into a direct sum of submodules of highest weight. To that end, we undertake a detailed study of singular vectors
$\hat u_j=\hat F(w_j\tp v_\la)\in \C\tp M_\la$ as rational functions $\h^*\to V\tp U(\g_-)$ upon the natural identification of $M_\la$ with $U_q(\g_-)$ as a vector space.
We end up with rescaled singular vectors $u_j$, $j\in I$,  that are regular on $\h^*$ and never turn zero.
\subsection{Singular vectors in $V\tp M_\la$}
The  vectors $\hat u_j=\hat F(w_j\tp v_\la)$, $j\in I$, are expanded as
\be
\hat u_j=\sum_{i=1}^jw_i\tp \hat f_{ij}v_\la\in V\tp M_\la.
\label{hat u}
\ee
They are singular for all $\la$ where defined and generate submodules $M_j\subset \C\tp M_\la$ of highest weight $\la+\nu_j$.
They have rational trigonometric dependence on  $\la$ and may have zeros and poles.
As they matter up to scalar factors,  it is convenient to pass from  $\hat u_j$ to
$\check u_j=\bar A_{1,\ldots,j-1}^j(\la)\hat u_j $,
which are regular in $\la$.
Then
\be
\label{check_u}
\check u_j=\sum_{i=1}^{j}w_i\tp \check u_{ij},\quad \mbox{where}\quad \check u_{ij}=\bar A_{1,\ldots,i-1}^j(\la)\check f_{ij}v_\la, \quad \check f_{ij}=\hat f_{ij}\bar A_{i,\ldots,j-1}^j\in U_q(\b_-).
\ee
They  generate submodules $M_j\subset V\tp M_\la$ if do not turn zero, otherwise they
needs rescaling. That is the subject of our further study.

Remark that, for any $U_q(\g)$-module $Z$, a singular vector $u=\sum_{i\in I} w_i\tp z_i\in \V\tp Z$,
defines a $U_q(\g_+)$-equivariant map  $V^*\to Z$. Since the $U_q(\g_+)$-module $V^*$ is cyclicly generated by $z_1$, we call it
  generating coefficient of $u$.
\begin{lemma}
\label{dyn_root_vec_zero}
Suppose that $Z$ is generated by highest weight vector $v_\la$.
Suppose that $\check f_{mj}v_\la=0$ for  $m<j $. Then  $\check f_{ij}v_\la=0$ for all $i\leqslant m$.
\end{lemma}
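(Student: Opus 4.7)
The plan rests on two ingredients: first, that $\hat u_j=\sum_{i\le j}w_i\otimes\hat f_{ij}v_\la$, equivalently the rescaled $\check u_j=\sum_i w_i\otimes\check u_{ij}$ with $\check u_{ij}=\bar A^j_{1,\ldots,i-1}(\la)\,\check f_{ij}v_\la$, is a $U_q(\g)$-singular vector in $V\otimes Z$ by construction; second, the chain structure of the weights $\nu_1>\cdots>\nu_7$ of the minimal representation $V$.

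I would expand $e_\al\check u_j=0=e_\bt\check u_j$ via the coproduct and equate coefficients of each $w_k$, obtaining
\[
\sum_{i:\,\nu_i=\nu_k-\gm}c_i^\gm\,\check u_{ij}+q^{(\nu_k,\gm)}\,e_\gm\check u_{kj}=0,\qquad \gm\in\{\al,\bt\}.
\]
A direct inspection of the weight diagram of $V$ shows that for each $k\in\{1,\ldots,6\}$ exactly one simple root $\gm\in\{\al,\bt\}$ satisfies $\nu_k-\gm=\nu_{k+1}$, while the other root $\gm'$ gives $\nu_k-\gm'\notin\{\nu_i\}_{i\in I}$. Thus each such $k$ produces the pair
\[
c^\gm_{k+1}\check u_{k+1,j}=-q^{(\nu_k,\gm)}e_\gm\check u_{kj},\qquad e_{\gm'}\check u_{kj}=0,
\]
with nonzero constant $c^\gm_{k+1}$ coming from the module structure of $V$.

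Next I would argue by reverse induction on $i$ from $i=m$ down to $i=1$. The base case $\check u_{mj}=\bar A^j_{1,\ldots,m-1}(\la)\check f_{mj}v_\la=0$ is immediate from the hypothesis. Assuming $\check u_{i+1,j}=0$, the first relation at $k=i$ forces $e_\gm\check u_{ij}=0$ (where $\gm=\nu_i-\nu_{i+1}$), while the second gives $e_{\gm'}\check u_{ij}=0$ automatically. Hence $\check u_{ij}$ is annihilated by both Chevalley generators of $U_q(\g_+)$ and is a singular vector in $Z$ of weight $\la+\nu_j-\nu_i$ strictly below $\la$. For generic $\la$ the Verma cover $M_\la$ is irreducible and admits no nonzero singular vector of weight strictly below $\la$, whence $\check u_{ij}=0$.

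The main obstacle I anticipate concerns recovering $\check f_{ij}v_\la=0$ from $\check u_{ij}=0$ at exceptional values of $\la$ where $\bar A^j_{1,\ldots,i-1}(\la)=0$, since there the relation $\check u_{ij}=\bar A^j_{1,\ldots,i-1}(\la)\,\check f_{ij}v_\la$ is degenerate. I would resolve this by a Zariski-closure argument: $\check f_{ij}v_\la$ is a regular function of $\la$ which vanishes on an open dense subset of the locus $\{\la:\check f_{mj}v_\la=0\}$, hence on the whole locus. The passage from the Verma module to a general cyclic module $Z$ generated by $v_\la$ is then immediate via the canonical surjection $M_\la\twoheadrightarrow Z$, under which every identity $\check f_{ij}v_\la=0$ descends.
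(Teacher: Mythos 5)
Your derivation of the coupled relations from singularity of $\check u_j$ is fine, and it reproduces in effect the raising identity the paper quotes ($e_\mu\check f_{kj}=\check f_{k+1,j}\bar A^j_k \bmod U_q(\g)\g_+$). The gap is in how you descend. Your induction only shows that $\check u_{i,j}$ is a singular vector of $Z$, and to kill it you invoke irreducibility of the Verma cover "for generic $\la$", patched by a Zariski-closure argument. But the lemma is a statement about one fixed cyclic highest weight module $Z$ at one fixed weight $\la$, and in every application in the paper it is used exactly at non-generic points: $Z=M^\k_\la$ with $\la\in\Theta^\k$, where $M_\la$ is reducible, where some $\bar A^j_i(\la)=0$, and where (in the divisibility variant of the Remark) the hypothesis holds in the quotient without holding in $M_\la$. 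The hypothesis $\check f_{mj}v_\la=0$ is not given on a family of weights, so there is no locus whose dense open part you can argue on; even when it does cut out a subvariety, its points lie in the reducibility locus, so "no singular vectors below the highest weight" is false there; and the surjection $M_\la\twoheadrightarrow Z$ transfers vanishing in the wrong direction, so you cannot lift the hypothesis from $Z$ to the Verma module. A second, related defect: even granting $\check u_{ij}=0$, recovering $\check f_{ij}v_\la=0$ requires dividing by $\bar A^j_{1,\ldots,i-1}(\la)$, which vanishes precisely at the weights of interest, and your proposed regularization again needs the unavailable deformation of $\la$.

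The paper's proof avoids all genericity. From $\check f_{mj}v_\la=0$ one applies the raising identity repeatedly to get $\bar A^j_{m,\ldots,k-1}(\la)\,\check f_{kj}v_\la=0$ for all $k\geqslant m+1$; then the explicit expansion (\ref{hat_f}) exhibits $\check f_{m-1,j}v_\la$ as a combination, with nowhere-vanishing coefficients $a_k(\la)$, of $f_{m-1,m}\check f_{mj}v_\la$ and the vectors $f_{m-1,k}\bigl(\bar A^j_{m,\ldots,k-1}(\la)\check f_{kj}\bigr)v_\la$, all of which are zero, and a descending induction finishes. That argument is purely algebraic, valid in any module generated by a highest weight vector and at every weight, which is exactly the strength needed later (e.g.\ for the divisibility statements on $\Theta^\k$ and for $\check f_{27}$ in Section 8). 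To repair your proof you would need to replace the "irreducible Verma cover" step by such a term-by-term vanishing argument, at which point you have essentially rewritten the paper's proof.
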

\begin{proof}
Recall that  for all $k\geqslant m$ and $\mu=\nu_{k}-\nu_{k+1}$,
$
e_\mu \check f_{kj}=\check f_{k+1,j}\bar A^j_k\mod U_q(\g)\g_+ $, cf. \cite{M7}.
Then $\check{f}_{mj}v_\la=0$
implies $\bar A^j_{m,\ldots,k-1}(\la)\check{f}_{kj}v_\la=0$ for all $k\geqslant m+1$.
If follows from (\ref{hat_f}) that
$$
\check{f}_{m-1,j}v_\la=
a_{m}(\la)f_{m-1,m}\check f_{mj}v_\la+\sum_{k=m+1}^ja_{k}(\la)f_{m-1,k}\bigl(\bar{A}^j_{m,\ldots k-1}(\la)\check f_{kj}\bigr)v_\la,
$$
where $a_i(\la)$ are non-vanishing numerical factors.
This implies $\check{f}_{m-1,j}v_\la=0$. Induction on $m$ proves the statement for all $i\leqslant m$.
\end{proof}
\begin{corollary}
The vector $\check u_j\in V\tp Z_\la$ turns zero
i) only if $\bar A_{m}^j(\la) =0$,
ii)  if only if $\check f_{mj}v_\la =0$, for some $m<j$.
\label{sing_vec_zero}
\end{corollary}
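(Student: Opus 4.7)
The plan is to extract the statement from the explicit expansion $\check u_j=\sum_{i=1}^{j}w_i\tp \check u_{ij}$ with $\check u_{ij}=\bar A_{1,\ldots,i-1}^j(\la)\check f_{ij}v_\la$, combined with Lemma \ref{dyn_root_vec_zero}. Since the $w_i$ are linearly independent, $\check u_j=0$ is equivalent to the vanishing of each $\check u_{ij}$ separately, and the whole proof reduces to tracking which factor in each product can be responsible.

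For part (i), I would isolate the top component: since $\hat f_{jj}=1$ and the product $\bar A^j_{j,\ldots,j-1}$ is empty, one has $\check u_{jj}=\bar A^j_{1,\ldots,j-1}(\la)\,v_\la$, so its vanishing forces $\bar A^j_m(\la)=0$ for at least one $m<j$. For the forward direction of (ii), I would then pick such an $m$ minimal, note that by minimality the prefactor $\bar A^j_{1,\ldots,m-1}(\la)$ is a nonzero scalar, and read off $\check f_{mj}v_\la=0$ from $\check u_{mj}=0$.

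For the converse in (ii), the idea is to handle indices $i\leqslant m$ and $i>m$ separately. For $i\leqslant m$, Lemma \ref{dyn_root_vec_zero} directly gives $\check f_{ij}v_\la=0$, hence $\check u_{ij}=0$. For $i>m$, I would reuse the intermediate identity appearing in the proof of that Lemma, namely $\bar A^j_{m,\ldots,k-1}(\la)\check f_{kj}v_\la=0$ for all $k\geqslant m+1$, and factor $\bar A^j_{1,\ldots,i-1}=\bar A^j_{1,\ldots,m-1}\cdot \bar A^j_{m,\ldots,i-1}$ to conclude $\check u_{ij}=0$ throughout the remaining range.

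The main subtlety I anticipate is that the renormalization $\check f_{mj}=\hat f_{mj}\bar A^j_{m,\ldots,j-1}$ telescopes $\bar A^j_m$ out of $\check f_{mj}$, so $\check f_{mj}v_\la=0$ does \emph{not} directly imply $\bar A^j_m(\la)=0$, and (i) is therefore not a shortcut for the backward direction of (ii). The correct bridge is precisely the identity $\bar A^j_{m,\ldots,k-1}(\la)\check f_{kj}v_\la=0$, which packages the propagation of the zero upward along the ladder in exactly the combination appearing in $\check u_{ij}$; recognising that this is all that is needed is the only nontrivial point, everything else being indexing.
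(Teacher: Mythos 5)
Your argument is correct and follows essentially the paper's own route: componentwise vanishing in the expansion $\check u_j=\sum_{i}w_i\tp \check u_{ij}$, with the top coefficient $\check u_{jj}=\bar A^j_{1,\ldots,j-1}(\la)v_\la$ giving (i), and Lemma \ref{dyn_root_vec_zero} together with the propagation identity $\bar A^j_{m,\ldots,k-1}(\la)\check f_{kj}v_\la=0$ from its proof giving the ``if'' part of (ii) --- a point you rightly flag as not following from the Lemma's statement alone, and which the paper uses implicitly. The only cosmetic difference is the ``only if'' half of (ii): the paper reads $\check f_{1j}v_\la=0$ directly off the generating coefficient $\check u_{1j}=\check f_{1j}v_\la$ (so $m=1$ always works), which spares your minimality argument, though your version is equally valid.
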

\begin{proof}
  "Only if" in both statements follow from the equalities $\check u_{jj}=\bar A_{1,\ldots,j-1}^jv_\la$
  and, respectively, $\check u_{1j}=\check f_{1j}v_\la$.
  "If"  is due to Lemma \ref{dyn_root_vec_zero}.
\end{proof}
\begin{remark}
 Suppose that $Z$ is a family of $U_q(\g)$-modules of highest weight $\la$
ranging in an algebraic set $\Theta\subset \h^*$.
 Then Lemma \ref{dyn_root_vec_zero} admits an obvious modification if one replaces
equality to zero with divisibility by some $\phi\in \C_q[\Theta]$. Assuming it indecomposable,
 Corollary \ref{sing_vec_zero} can be appropriately reworded if $\C_q[\Theta]$ is a unique factorization
 domain and $Z$ has no zero divisors.
 In what follows, we apply this modification to $\Theta=\Theta^\k$ and $Z=M_\la^\k$.
\end{remark}
In the next statement we essentially assume that $\k\not=\h$.
\begin{propn}
\label{zero_M_j^k}
  The submodule $M_j^\k\subset V\tp M^\k_\la$ vanishes for all $j\in \bar I^\k$.
\end{propn}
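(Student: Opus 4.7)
The plan is to show, for the maximal $i<j$ with $\al := \nu_i - \nu_j \in \Pi_\k$, that the (normalized) highest weight generator $\check u_j(\la)$ of $M_j$ lies in $V \tp M$ for every $\la \in \Theta^\k$, where $M = \sum_{\al' \in \Pi_\k} M_{\la-\al'}$ is the submodule quotiented out in the definition of $M_\la^\k$. Since $M_j$ is cyclically generated over $U_q(\g)$ by $\check u_j$, this immediately yields $M_j^\k = \pi_V^\k(M_j) = 0$.

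The first step is arithmetic: a direct evaluation gives $\eta_{ij}(\la) = (\la+\rho, \al) - \frac{1}{2}(\al,\al)$, and the defining condition $q^{2(\la+\rho,\al)} = q^{(\al,\al)}$ of $\Theta^\k$ forces $q^{2\eta_{ij}(\la)} = 1$, whence $\bar A^j_i(\la) = 0$. Second, I would identify the $w_i$-coefficient $\check f_{ij}(\la)v_\la$ with a nonzero scalar multiple of the singular vector $v_{\la-\al}$. By formula (\ref{hat_f}), every summand of $\hat f_{ij}$ carries the factor $A^j_i$, which has a simple pole at $\la$. The factor $\bar A^j_i$ appearing in the regularizer $\bar A^j_{i,\ldots,j-1}$ of $\check f_{ij} = \hat f_{ij} \bar A^j_{i,\ldots,j-1}$ cancels this pole. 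By the maximality of $i$, every remaining factor $\bar A^j_k(\la)$ with $i<k<j$ stays nonzero (since $\nu_k - \nu_j \notin \Pi_\k$), so $\check f_{ij}(\la)v_\la$ is a nonzero scalar multiple of $v_{\la-\al} \in M_{\la-\al} \subset M$.

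Third, Lemma \ref{dyn_root_vec_zero} applied to $Z := M_\la^\k$ with highest weight vector $\pi^\k(v_\la)$ propagates the vanishing: from $\pi^\k(\check f_{ij}(\la) v_\la) = 0$ it follows that $\pi^\k(\check f_{mj}(\la) v_\la) = 0$, i.e.\ $\check f_{mj}(\la) v_\la \in M$, for all $m \leqslant i$. For indices $m > i$, the scalar prefactor $\bar A^j_{1,\ldots,m-1}(\la)$ in $\check u_{mj}(\la)$ contains the factor $\bar A^j_i(\la) = 0$, so $\check u_{mj}(\la) = 0$ outright. Combining these, every component of $\check u_j(\la) = \sum_m w_m \tp \check u_{mj}(\la)$ lies in $M$, hence $\check u_j(\la) \in V \tp M$ and $\pi_V^\k(\check u_j(\la)) = 0$, giving $M_j^\k = 0$.

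The main obstacle is the fine pole/zero accounting of Step 2: one must ensure that the pole of each summand of $\hat f_{ij}$ at $\la$ is indeed simple and that its residue coincides up to a nonzero regular scalar with the singular vector $v_{\la-\al}$, a fact that ultimately rests on standard reduction-algebra theory. In the pseudo-Levi cases $\k_{s,l}$ and $\k_{l,l}$, additional indices $k < i$ may also satisfy $\nu_k - \nu_j \in \Pi_\k$ and thereby contribute extra zero factors $\bar A^j_k(\la) = 0$; these only reinforce the vanishing for the coefficients with $m > i$, while the choice of the maximal $i$ is precisely what preserves the non-vanishing scalar in Step 2 so that the identification with $v_{\la-\al}$ goes through uniformly.
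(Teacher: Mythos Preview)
Your approach coincides with the paper's: both show $\check u_j(\la)\in V\tp M$ by arguing that $\check f_{ij}(\la)v_\la$ lies in $M_{\la-\al}$, propagating this to all $m\leqslant i$, and noting that the components for $m>i$ are killed by the factor $\bar A^j_i(\la)=0$. Where the paper writes down an explicit factorization $\check f_{kj}\simeq(\ldots)\check f_{ij}$ valid when $\bar A^j_i=0$, you invoke Lemma~\ref{dyn_root_vec_zero} for $Z=M_\la^\k$; these are the same mechanism.

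Two issues. First, your Step~2 overreaches: you need only that $\check f_{ij}(\la)v_\la$ is a (possibly zero) scalar multiple of $v_{\la-\al}$, not a nonzero one, and your argument for nonvanishing (that every $\bar A^j_k(\la)$ with $i<k<j$ is nonzero) holds only generically on $\Theta^\k$, not at every point. The weaker claim suffices for Lemma~\ref{dyn_root_vec_zero}, so this is harmless. Second, and more importantly, you conclude $M_j^\k=0$ from $\pi_V^\k(\check u_j)=0$ via ``$M_j$ is cyclically generated by $\check u_j$'', but that holds only when $\check u_j(\la)\neq 0$; at weights where $\check u_j(\la)=0$ the generator of $M_j$ is the rescaled $u_j$, and your argument is vacuous there. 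The paper closes this gap by density: it restricts first to $\la$ with $\check u_j(\la)\neq 0$, which is dense in $\Theta^\k$ by Proposition~\ref{reg_f_1j}, and then passes to all of $\Theta^\k$ using regularity of $u_j$. Your write-up needs the same patch.
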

\begin{proof}
  It is sufficient to consider the case $\Pi_\k=\{\mu\}$.
Choose $\la \in \Theta^\k$ so that  $\check u_j\not=0$. By Proposition \ref{reg_f_1j} below, such weights are dense in $\Theta^\k$.
Let $i\in I$ be such that $\nu_i-\nu_j=\mu$.
As $\bar A^j_i(\la)=0$, for all $k\leqslant i$ one has
$$
\check{f}_{kj}v_\la\simeq
\sum_{k<\vec m< i}f_{k,\vec m,i}\bar{A}^j_{k,\vec m,i}(\la)\check f_{ij}v_\la\in  M_{\la-\mu}\subset  M_{\la} .
$$
Therefore  $\check u_j$ is in the submodule $V\tp M_{\la-\mu}$ and so does $u_j$ for all $\la\in \Theta^\k$.
\end{proof}

\subsection{Projection to $\gr \Vc_\bullet $}
It follows from Proposition \ref{principal_diagram1} that the image of $\hat u_j$ in $\gr \Vc_\bullet $ lies in $\gr \Vc_j$, and thus $\hat u_j=\hat D_j w_\la^j\mod \Vc_{j-1}$ with some $\hat D_j\in \C$. Obviously $\hat D_1= 1$. For higher $j$ calculation gives
$$
\quad \hat D_2\simeq\frac{[\xi_{12}]_q}{[\eta_{12}]_q}, \quad
\hat D_3\simeq\frac{[\xi_{13}]_q}{[\eta_{13}]_q}\frac{[\xi_{23}]_q}{[\eta_{23}]_q},
\quad
\hat D_4\simeq\frac{[\xi_{14}]_q}{[\eta_{14}]_q}\frac{[\xi_{24}]_q}{[\eta_{24}]_q}\frac{[\xi_{34}]_q}{[\eta_{34}]_q},
\quad
\hat D_5\simeq\frac{[\xi_{15}]_q}{[\eta_{15}]_q}\frac{[\xi_{25}]_q}{[\eta_{25}]_q}\frac{[\frac{\xi_{35}}{2}]_q}{[\frac{\eta_{35}}{2}]_q},
$$
$$
\hat D_6\simeq\frac{[\xi_{16}]_q}{[\eta_{16}]_q}\frac{[\xi_{36}]_q}{[\eta_{36}]_q}\frac{[\frac{\xi_{26}}{2}]_q}{[\frac{\eta_{26}}{2}]_q}\frac{[\xi_{56}]_q}{[\eta_{56}]_q} ,\quad
\hat D_7\simeq\frac{[\xi_{27}]_q}{[\eta_{27}]_q}\frac{[\xi_{37}]_q}{[\eta_{37}]_q}\frac{[\frac{\xi_{17}}{2}]_q}{[\frac{\eta_{17}}{2}]_q}\frac{[\xi_{57}]_q}{[\eta_{57}]_q}
\frac{[\xi_{67}]_q}{[\eta_{67}]_q},
$$
where the quantities
$$\xi_{ij}=h_i-h_j+\rho_i-\rho_j+\frac{1}{2}(||\nu_i||^2-||\nu_j||^2) \in \h+\C$$
 are related to eigenvalues of the operator $\Q$ by  $q^{2\xi_{ij}}=x_i\bar x_j$.
For reader's convenience, $\xi_{ij}$ are given in Section \ref{eta_xi}.

Let $\Theta_j\subset \h^*$,  $j>4$,  denote the set of weights such that $q^{2\eta_{4j}(\la)}=-q^{2}$.
\begin{lemma}
 The module $M_j$ is not contained in  $\Vc_{j-1}$ for a dense open subset $\Theta_j^\circ\subset\Theta_j$.
\label{theta_zero}
\end{lemma}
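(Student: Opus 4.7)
The plan is a two-step rescaling argument based on the explicit formulas for $\hat D_j$ displayed before the lemma. After the first-stage rescaling $\check u_j=\bar A^j_{1,\ldots,j-1}(\la)\hat u_j$ from (\ref{check_u}), the singular vector $\check u_j$ is regular on $\h^*$, and its image in $\gr\Vc_j$ equals $\check D_j(\la)\,w^j_\la$ with $\check D_j=\bar A^j_{1,\ldots,j-1}\hat D_j$. Using the cancellation identities $\bar A^j_m/[\eta_{mj}]_q\simeq q^{\eta_{mj}}$ and $\bar A^j_m/[\eta_{mj}/2]_q\simeq q^{\eta_{mj}}(q^{\eta_{mj}/2}+q^{-\eta_{mj}/2})$, the coefficient $\check D_j$ reduces to the product of nonvanishing exponentials, the numerator $q$-brackets of $\hat D_j$, the leftover factor $\bar A^j_4(\la)$ (which has no matching denominator bracket in $\hat D_j$), and a factor $(q^{\eta_{mj}/2}+q^{-\eta_{mj}/2})$ coming from the half-bracket index $m\in\{3,2,1\}$ for $j\in\{5,6,7\}$, respectively.

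A direct calculation using the closed expressions in Section~\ref{eta_xi} yields the identity $\eta_{mj}=2\eta_{4j}-2$ for the $m$ above, so $q^{\eta_{mj}}=-1$ on $\Theta_j=\{q^{2\eta_{4j}}=-q^2\}$, forcing the factor $(q^{\eta_{mj}/2}+q^{-\eta_{mj}/2})$ to vanish on $\Theta_j$; thus $\check u_j$ itself vanishes on $\Theta_j$. To cancel this zero, I would set $u_j=(q^{\eta_{mj}/2}+q^{-\eta_{mj}/2})^{-1}\check u_j$. This is a regular singular vector generating $M_j$ and nonzero on a dense open subset of $\h^*$ meeting $\Theta_j$ densely. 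Its image in $\gr\Vc_j$ is a scalar multiple of the product of $\bar A^j_4(\la)$, the nonvanishing exponentials, and the numerator $q$-brackets of $\hat D_j$. On $\Theta_j$ one has $\bar A^j_4=\frac{1+q^2}{q-q^{-1}}\neq 0$, so the only possible zeros of this reduced coefficient on $\Theta_j$ come from the numerator $q$-brackets.

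It remains to verify that no numerator $q$-bracket of $\hat D_j$ vanishes identically on $\Theta_j$. Every such bracket has the form $[\xi_{ij}]_q$ or $[\xi_{ij}/2]_q$ with $i\neq 4$. From $\xi_{ij}-\eta_{ij}=(\nu_i,\nu_i-\nu_j)$ and $\nu_4=0$ one has $\xi_{4j}=\eta_{4j}$, while the linear part $h_{\nu_i}-h_{\nu_j}$ of $\xi_{ij}$ for $i\neq 4$ differs from the linear part $-h_{\nu_j}$ of $\eta_{4j}$ by the nonzero $h_{\nu_i}$. Hence the affine hyperplanes $\{q^{2\xi_{ij}}=1\}$ (or $\{q^{\xi_{ij}}=1\}$) and $\Theta_j=\{q^{2\eta_{4j}}=-q^2\}$ have distinct linear parts and cannot coincide; agreement of constant parts after any rescaling would impose a root-of-unity relation on $q$, excluded by hypothesis. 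Therefore the zero locus of the reduced coefficient is a proper closed subset of $\Theta_j$, and its complement is the required dense open $\Theta_j^\circ$.

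The main obstacle is the first-stage calculation: tracking the cancellations between the rescaling product $\bar A^j_{1,\ldots,j-1}$ and the denominator brackets of $\hat D_j$ (in particular the half-bracket that produces the factor $(q^{\eta_{mj}/2}+q^{-\eta_{mj}/2})$), and verifying the identity $\eta_{mj}=2\eta_{4j}-2$ for the specific index $m$ attached to the half-bracket. For $j\in\{5,6,7\}$ the weight data for $V$ are explicit and the check is finite, but care is needed to ensure no additional common factor of $\check u_j$ has been overlooked along $\Theta_j$.
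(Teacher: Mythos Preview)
Your argument contains a circular step. From the vanishing of the projection coefficient $\check D_j$ on $\Theta_j$ you conclude that ``$\check u_j$ itself vanishes on $\Theta_j$''. But $\check D_j(\la)=0$ only says that $\check u_j\in\Vc_{j-1}$, not that $\check u_j=0$; the two alternatives are precisely (i) $\check u_j(\la)=0$, or (ii) $\check u_j(\la)\neq 0$ and $M_j\subset\Vc_{j-1}$. Ruling out (ii) on a dense open set is exactly the statement of the lemma. In the paper the vanishing of $\check u_j$ on $\Theta_j$ is Corollary~\ref{regular}, \emph{deduced from} this lemma, not an input to it. Consequently your rescaled vector $u_j=(q^{\eta_{mj}/2}+q^{-\eta_{mj}/2})^{-1}\check u_j$ is not known to be regular, nor nonzero on a dense part of $\Theta_j$, so the subsequent computation of its image in $\gr\Vc_j$ proves nothing. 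Showing directly that $u_j$ is regular and nonvanishing is the content of Proposition~\ref{reg_f_1j}, which requires a separate calculation modulo the ideal $J$; you have not supplied such an argument.

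The paper avoids all of this with a one-line spectral argument: the operator $\Q$ acts on $M_j$ by the scalar $x_j$ and on $\Vc_{j-1}$ with (generalized) eigenvalues $\{x_1,\ldots,x_{j-1}\}$, so $M_j\subset\Vc_{j-1}$ forces $q^{2\xi_{ij}}=1$ for some $i<j$. On $\Theta_j$ two of the $\xi_{ij}$ are constant, with $q^{2\xi_{4j}}=-q^2$ and $q^{2\xi_{j'j}}=q^8$, neither equal to $1$; the remaining $q^{2\xi_{ij}}$ are nonconstant on $\Theta_j$, so their vanishing loci form a proper closed subset. This gives $\Theta_j^\circ$ immediately. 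As a side remark, your third paragraph also has a gap at $i=j'$: the linear part of $\xi_{j'j}$ is $-2h_{\nu_j}$, which is proportional (not merely unequal) to the linear part $-h_{\nu_j}$ of $\eta_{4j}$, so the ``distinct linear parts'' argument fails there; this is why the paper treats $i=j'$ by the explicit evaluation $q^{2\xi_{j'j}}=q^8$.
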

\begin{proof}
Observe that $q^{2\xi_{4j}}=-q^2\not =1$ and $q^{2\xi_{j'j}}=q^8\not =1$. For all other $i<j$, the functions $q^{2\xi_{ij}}$  are not constant on $\Theta_j$.
Therefore all $q^{2\xi_{ij}}$ with $i<j$ are distinct from $1$, off an algebraic subset in $\Theta_j$.
For such weights, $M_j$ cannon be in $\Vc_j$, since the eigenvalue $x_j$ is distinct from
the $\Q$-eigenvalues $\{x_i\}_{i=1}^{j-1}$ on $ \Vc_{j-1}$.
\end{proof}

\begin{corollary}
  For all $j>4$,  $\check f_{1j}$ identically vanishes on  $\Theta_j$.
  \label{regular}
\end{corollary}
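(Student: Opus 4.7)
The plan is to combine Lemma~\ref{theta_zero} with an explicit algebraic analysis of $\check f_{1j}$. First I would confirm that the singular vector $\check u_j$ does not degenerate on $\Theta_j$, so that the vanishing of $\check f_{1j}(\la)v_\la$ is a genuine structural claim. Its $w_j$-component is $\bar A^j_{1,\ldots,j-1}(\la)v_\la$; on $\Theta_j$ the factor $\bar A^j_4(\la)$ specializes to the nonzero constant $(1+q^2)/(q-q^{-1})$, while the remaining factors $\bar A^j_i$ for $i\neq 4$ are generically nonzero on the hypersurface. Moreover, writing $\check D_j=\bar A^j_{1,\ldots,j-1}\hat D_j$ and using $\bar A^j_i\simeq[\eta_{ij}]_q$ together with the explicit form of $\hat D_j$ (which carries no $[\eta_{4j}]_q$ in its denominator), $\check D_j$ retains an uncanceled factor $[\eta_{4j}]_q$, nonzero on $\Theta_j$ since $q^{2\eta_{4j}}=-q^2\neq 1$. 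Together with Lemma~\ref{theta_zero} this shows that $\check u_j$ is nonzero on a dense open $\Theta_j^\circ$ and generates $M_j$, so the vanishing of its $w_1$-coordinate $\check u_{1j}=\check f_{1j}(\la)v_\la$ cannot be attributed to a degenerate $\check u_j=0$.

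To establish $\check f_{1j}(\la)v_\la=0$ on $\Theta_j$, I would use formula~(\ref{hat_f}) and the rescaling $\check f_{ij}=\hat f_{ij}\bar A^j_{i,\ldots,j-1}$ to expand
\[
\check f_{1j}\;=\;q^{\eta_{1j}-\tilde\rho_1+\tilde\rho_j}\sum_{\vec m\subset\{2,\ldots,j-1\}} f_{1,\vec m,j}\prod_{i\in\{2,\ldots,j-1\}\setminus\vec m}\bar A^j_i.
\]
Split this sum into matched pairs indexed by whether $4\in\vec m$ (a path through $\nu_4$) or $4\notin\vec m$ (a shortcut bypassing $\nu_4$). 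Using the relevant $q$-Serre relation in $U_q(\g_-)$ to rewrite each product $f_{k_1,4}f_{4,k_2}$ as $f_{k_1,k_2}$ plus corrections bypassing $\nu_4$, each path-through-$\nu_4$ term reduces to a linear combination of bypassing terms. Substituting $\bar A^j_4(\la)=(1+q^2)/(q-q^{-1})$ into the resulting identity, the contribution of each matched pair becomes proportional to the defining polynomial $(q^{2\eta_{4j}}+q^2)$ of $\Theta_j$; the entire sum therefore has this common factor and vanishes on $\Theta_j$.

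The hard part is the combinatorial bookkeeping of this second step, which must be carried out separately for $j=5,6,7$, uses the explicit Shapovalov inverse entries $f_{ij}$ compiled in Appendix~\ref{ShapInv}, and requires careful tracking of all correction terms generated in absorbing the index $4$. A potentially more conceptual route would exploit the $\Q$-eigenvalue coincidence $x_j=-q^{-2}x_4$ on $\Theta_j$, reading it as a resonance between $M_j$ and $M_4$ that forbids the $w_1$-direction in the unique singular vector of weight $\la+\nu_j$; but making this symmetry precise seems to reduce to the same Serre-relation computation.
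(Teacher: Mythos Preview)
Your first paragraph contains a concrete error that inverts the logic of the argument. You claim that the $w_j$-component $\bar A^j_{1,\ldots,j-1}(\la)v_\la$ of $\check u_j$ is generically nonzero on $\Theta_j$, checking only that $\bar A^j_4\neq 0$. But for each $j>4$ there is the ``mirror'' index $j'=8-j$ (so $j'=3,2,1$ for $j=5,6,7$) with $\eta_{j'j}=2\eta_{4j}-2$; hence on $\Theta_j$ one has $q^{2\eta_{j'j}}=(q^{2\eta_{4j}})^2q^{-4}=(-q^2)^2q^{-4}=1$, and therefore $\bar A^j_{j'}$ vanishes \emph{identically} on $\Theta_j$. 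So the $w_j$-component of $\check u_j$ is zero everywhere on $\Theta_j$, and your conclusion that $\check u_j\neq 0$ on $\Theta_j^\circ$ is false. The same $j'$-factor is what makes $\check D_j=0$ on $\Theta_j$: in the product $\check D_j=\bar A^j_{1,\ldots,j-1}\hat D_j$ the factor $[\eta_{j'j}]_q$ from $\bar A^j_{j'}$ combines with the $[\frac{\eta_{j'j}}{2}]_q$ in the denominator of $\hat D_j$ to leave $q^{\eta_{j'j}/2}+q^{-\eta_{j'j}/2}$, which is zero precisely on $\Theta_j$.

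The paper's proof runs in the opposite direction from yours and is much shorter. Since $\check D_j(\la)=0$ on $\Theta_j$, the image of $\check u_j$ in $\gr\Vc_j$ is zero, so either $\check u_j=0$ or $M_j\subset\Vc_{j-1}$. Lemma~\ref{theta_zero} rules out the latter on the dense open $\Theta_j^\circ$, forcing $\check u_j=0$ there and hence on all of $\Theta_j$; the vanishing of its generating coefficient $\check u_{1j}=\check f_{1j}v_\la$ is then immediate. No direct computation with Serre relations or path pairings is needed. Your second paragraph outlines a plausible brute-force alternative, but it is only a sketch, and once you recognise that $\check u_j$ itself vanishes the entire computation becomes unnecessary.
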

\begin{proof}
The product  $\check D_j=\prod_{k=1}^{j-1}\bar A^j_k\hat D_j$ is polynomial in $q^{\pm h_\mu}$ and turns zero on  $\Theta_j$  for $j>4$
thanks to the factor $\bar A^j_{j'}$ and the equality $\frac{\eta_{j'j}}{2}=\eta_{4j}-1$.   The tensor $\check u_j\in V\tp M_\la$ is projected to
$\check C(\la)w^j_\la=0\mod  \Vc_{j-1}$. That is possible only in the following two cases: either $\check u_j$ turns zero or $M_j\subset \Vc_{j-1}$.
By Lemma \ref{theta_zero}, $\check u_j(\la)=0$ on $\Theta_j^\circ$ and hence on $\Theta_j$.
This yields $\check u_{1j}= \check f_{1j}v_\la =0$ on $\Theta_j$ for the generating coefficient.
\end{proof}

\subsection{Regularization of singular vectors in $V\tp M_\la$}
In this section, we evaluate a scalar function $\dt_j\sqsubset\check u_j$ and show that renormalized singular vectors
$\dt_j^{-1}\check u_j$ do not turn zero at all $\la$.

Denote by $J$ the two-sided ideal in $U_q(\g_-)$ generated by the relation $f_\al f_\bt= \bar q^3 f_\bt f_\al$.
The non-zero elements $f_{ij}$ modulo $J$ read
$$
f_{12}=f_\al, \quad f_{23}=[3]_qf_\bt, \quad f_{34}=f_\al, \quad f_{45}=f_\al, \quad f_{56}= [3]_qf_\bt, \quad f_{67}=f_\al,
$$
$$
f_{24}  =(\bar q^{3}-q^3)f_\bt f_\al,
\quad
f_{35}  =\frac{\bar q^{2}-1}{[2]_q}f_\al^2,
\quad
f_{57}  =(\bar q^{3}- q^3)f_\bt f_\al,
$$
$$
f_{25}  =\frac{(\bar q^3- q)(\bar q^3-q^3)}{[2]_q^2}f_\bt f_\al^2.
$$
Introduce $\check g_{ij}$ as  polynomials in $y_1^{\pm 1},\ldots, y_7^{\pm 1}$ with coefficients in $\U_q(\g_-)$
by similar formulas as $\check f_{ij}$ with all $\bar A^j_k$  in (\ref{hat_f}) and (\ref{check_u}) replaced by $\bar A_k=\frac{1-y_k}{q-\bar q}$.
\begin{lemma}
\label{25modJ}
One has $\check g_{35}\simeq f_\al^2\frac{\bar qy_4+q}{q+\bar q}$. Furthermore,  $\check g_{25}\simeq[3]_qf_\bt f_\al^2 \frac{\bar qy_4+ q}{q+\bar q}\mod J$.
\end{lemma}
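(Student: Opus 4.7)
The proof is essentially a direct computation using the explicit formula (\ref{hat_f}) for the rational coefficients in $\hat f_{ij}$, with the substitution $\bar A^j_k \leadsto \bar A_k = \frac{1-y_k}{q-\bar q}$ to pass to the abstract versions $\check g_{ij}$.

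For $\check g_{35}$ the increasing subsequences $\vec m$ lying strictly between $3$ and $5$ are just $\varnothing$ and $(4)$, so
$$\check g_{35} \simeq f_{35}\bar A_4 + f_{34}f_{45} = \frac{\bar q^{2}-1}{[2]_q}f_\al^2\cdot\frac{1-y_4}{q-\bar q} + f_\al^2.$$
Using $\bar q^2-1=-\bar q(q-\bar q)$ and $[2]_q=q+\bar q$, the first term collapses to $\frac{\bar q(y_4-1)}{q+\bar q}f_\al^2$, and adding $f_\al^2$ gives $f_\al^2\cdot\frac{\bar qy_4+q}{q+\bar q}$, as claimed.

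For $\check g_{25}$ the relevant $\vec m$ are the four increasing subsequences of $\{3,4\}$, yielding
$$\check g_{25}\simeq f_{25}\bar A_3\bar A_4 + f_{23}f_{35}\bar A_4 + f_{24}f_{45}\bar A_3 + f_{23}f_{34}f_{45}.$$
The key observation is that the commutation relation defining $J$, namely $f_\al f_\bt\equiv\bar q^3 f_\bt f_\al\pmod J$, lets each of the four monomials above be written as an explicit scalar multiple of $f_\bt f_\al^2$: indeed $f_{25}$, $f_{24}f_{45}$, and $f_{23}f_{35}$ already contain $f_\bt f_\al^2$ by the formulas for $f_{ij}$ listed just before the lemma, while $f_{23}f_{34}f_{45}=[3]_q f_\bt f_\al^2$ directly. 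Therefore $\check g_{25}\equiv S\,f_\bt f_\al^2\pmod J$, where
$$S = \frac{(\bar q^3-q)(\bar q^3-q^3)}{(q+\bar q)^2}\bar A_3\bar A_4 + \frac{[3]_q(\bar q^2-1)}{q+\bar q}\bar A_4 + (\bar q^3-q^3)\bar A_3 + [3]_q.$$

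The remaining task is to simplify $S$ into the claimed form. The main obstacle is purely arithmetical: one must see that the dependence on $y_3$ reduces to an overall invertible factor. The key identities are $\bar q^3-q^3=-(q-\bar q)[3]_q$, $\bar q^2-1=-\bar q(q-\bar q)$, and the identity $\bar q^3-q=-\bar q(q-\bar q)(q+\bar q)$ (a consequence of $q\bar q=1$, since $\bar q(\bar q-q)(\bar q+q)=\bar q^3-\bar q q^2=\bar q^3-q$). Substituting these into $S$ and clearing $(q-\bar q)$ factors from the denominators in $\bar A_3\bar A_4$ yields
$$S = \frac{\bar q[3]_q}{q+\bar q}(1-y_3)(1-y_4) - \frac{\bar q[3]_q}{q+\bar q}(1-y_4) - [3]_q(1-y_3) + [3]_q,$$
and grouping the first two and the last two terms gives
$$S = -\frac{\bar q[3]_q}{q+\bar q}(1-y_4)y_3 + [3]_q y_3 = [3]_q y_3\cdot\frac{\bar qy_4+q}{q+\bar q}.$$
Since $y_3$ is an invertible scalar, this proves $\check g_{25}\simeq[3]_q f_\bt f_\al^2\frac{\bar qy_4+q}{q+\bar q}\pmod J$, completing the proof.
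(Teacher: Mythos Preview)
Your proof is correct and follows exactly the same approach as the paper: both expand $\check g_{35}$ and $\check g_{25}$ over the same index sets, reduce all monomials to $f_\bt f_\al^2$ modulo $J$, and arrive at the identical scalar coefficient $S$. The only difference is that where the paper simply says ``Computation of the coefficient in the brackets completes the proof,'' you carry out that computation explicitly and obtain $S=[3]_q y_3\,\frac{\bar q y_4+q}{q+\bar q}$, which is a welcome addition.
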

\begin{proof}
All calculations will be done modulo $J$. First of all,
$$
\check g_{35}\simeq f_{35}\bar A_4+f_{34}f_{45}=f_\al^2(\frac{\bar q^2-1}{[2]_q}\frac{y_4-1}{\bar q-q}+1)=f_\al^2\frac{\bar qy_4+q}{q+\bar q}.
$$
Substitute  $f_{ij}\mod J$ into
$
\check g_{25}\simeq f_{25}\bar A_3\bar A_4+f_{23}f_{35}\bar A_4+f_{24}f_{45}\bar A_3+f_{23}f_{34}f_{45}
$
and get
$$
\check g_{25}\simeq f_\bt f_\al^2\Bigl(\frac{(\bar q^3-q)(\bar q^3-q^3)}{[2]_q^2}\bar A_3\bar A_4+[3]_q\frac{\bar q^{2}-1}{[2]_q}\bar A_4+(\bar q^{3}-q^3)\bar A_3+
[3]_q\Bigr).
$$
Computation of the coefficient in the brackets completes the proof.
\end{proof}
Set $\dt_j=\frac{q^{\eta_{4j}-1}+q^{-\eta_{4j}+1}}{q+\bar q}=\frac{q^{\frac{\eta_{j'j}}{2}}+q^{-\frac{\eta_{j'j}}{2}}}{q+\bar q}$ for $5\leqslant j$ and $\dt_j=1$ for $j=1,2,3,4$. Corollary \ref{regular} assures
that  $\check f_{1j}/\dt_j$ is a polynomial in $q^{\pm h_\mu}$ via identification $y_i=q^{2\eta_{ij}}$, $i<j$.
\begin{propn}
\label{reg_f_1j}
For all $j>1$, the vectors $\check f_{1j}/\dt_j$  do not turn zero on $\h^*$.
\end{propn}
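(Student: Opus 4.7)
The plan is to reduce $\check f_{1j}$ modulo the commutation ideal $J$ and show that the reduction is $\dt_j$ times a nowhere-vanishing Laurent polynomial in $q^{\pm h_\mu}$, multiplied by a fixed PBW monomial. Since ordered monomials $f_\bt^a f_\al^b$ are linearly independent in $U_q(\g_-)/J$, displaying a single such monomial with nowhere-zero scalar coefficient suffices to conclude that $\check f_{1j}/\dt_j$ never vanishes on $\h^*$.

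For $j=2$ the calculation is immediate: $\check f_{12}=f_\al\, q^{\eta_{12}-\tilde\rho_1+\tilde\rho_2}$ is an ordered monomial times a nowhere-vanishing exponential. For $j=3,4$ we have $\dt_j=1$, and I would expand (\ref{hat_f}) and use the explicit formulas for $f_{ij}$ from the appendix (Section~\ref{ShapInv}) to reduce modulo $J$. The contributions should assemble into a scalar coefficient of $f_\bt^a f_\al^b$ with the appropriate exponents, times the overall prefactor $q^{\eta_{1j}-\tilde\rho_1+\tilde\rho_j}$; the essential point, verified by direct computation, is that the rational factor in $q^{2\eta_{kj}}$ that arises from combining the $\bar A^j_k$'s is itself everywhere nonzero.

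For $j=5,6,7$ I would follow the template of Lemma~\ref{25modJ}: compute $\check g_{1j}$ as a polynomial in the formal parameters $y_2,\dots,y_{j-1}$ with coefficients in $U_q(\g_-)$, and reduce modulo $J$. Corollary~\ref{regular} guarantees that the resulting scalar factor is divisible by $\frac{\bar q y_4+q}{q+\bar q}$, which is the avatar of $\dt_j$ under the substitution $y_k\mapsto q^{2\eta_{kj}}$. After dividing out this factor, the quotient should reduce to an everywhere-nonzero Laurent polynomial in the $y_k$'s; substituting back then gives the statement.

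The main obstacle is the combinatorial bookkeeping for $j=6,7$, where the number of $\vec m$-subsets grows to $16$ and $32$ respectively and higher cross-terms (for instance those involving $f_{25}$ and $f_{57}$) must be tracked. The cleanest strategy is to structure the computation inductively: extract $\check g_{1j}$ from $\check g_{2j}$ by peeling off the leftmost $f_{12}=f_\al$ and accounting for $\vec m$-subsets that contain $2$, thereby inheriting the factor $\frac{\bar q y_4+q}{q+\bar q}$ from Lemma~\ref{25modJ} rather than rediscovering it at each stage. Once the factorization is in place, verifying that the remaining factor has no zeros is a transparent check on a single Laurent polynomial in the $y_k$'s.
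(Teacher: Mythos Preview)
Your approach is correct and matches the paper's: both work modulo $J$, peel off $f_{12}$ on the left, and invoke Lemma~\ref{25modJ} to isolate the factor $\frac{\bar q y_4+q}{q+\bar q}\simeq\dt_j$ coming from $\check g_{25}$. The bookkeeping you anticipate for $j=6,7$ is lighter than you expect, because modulo $J$ every $f_{1k}$ with $k>2$ (and several other $f_{ik}$) vanishes, so $\check f_{1j}\simeq f_{12}\check g_{2j}$ with no further accounting; the paper then reads off $\check f_{16}\simeq f_{12}\check g_{25}f_{56}$ and $\check f_{17}\simeq f_{\al}\check g_{25}f_\bt f_\al[3]_q\, y_6$ in one line each.
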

\begin{proof}
Again, all calculations are done modulo $J$.
The statement is trivial for $j=2$. For $j=3$, it follows from the factorization
$\check f_{13}=f_{12}f_{23}$. We also have
$$
\check f_{14}\simeq f_{12}\check f_{24}\simeq f_{\al}([3]_qf_{\bt}f_\al+(\bar q^3-q^3)f_{\bt}f_\al\bar A_3^4)
=[3]_qf_{\al}f_{\bt}f_\al q^{\eta_{35}} \not =0,
$$
which proves the case $j=4$.

For each $j\geqslant 5$  and all $i<j$ we assign $y_i=q^{2\eta_{ij}}$ and use Lemma \ref{25modJ}:
the key point is that $\dt_j$ cancels the factor $\bar q y_4+q$ in all cases.
Modulo $J$, we have
$
\check f_{15}(\la)\simeq f_{12}\check g_{25}
$
and
$
\check f_{16}(\la)\simeq f_{12}\check g_{25}f_{56}
$.
This implies the statement for $j=5,6$. Finally,
$$
\check f_{17}(\la)\simeq f_{12} \check g_{27}\simeq f_{12}(\check g_{26}f_{67}+\check g_{25}f_{57}\bar A_6)
\simeq f_{12} (\check g_{25}f_{56}f_{67}+\check g_{25}f_{57}\bar A_6)
=f_{\al} \check g_{25} f_\bt f_\al[3]_q y_6,
$$
which proves it for $j=7$.
\end{proof}
\subsection{Decomposition of $V\tp M_\la$}
\label{decomposition_Verma}
Denote by $u_j$ the singular vectors $\check u_j/\delta_j(\la)$ for all $j\in I$. Then $u_j=D_jw^\la_j \mod \Vc_{j-1}$
with $D_j\simeq \prod_{i=1}^{j-1}\phi_{ij}$, where $\phi_{ij}=[\xi_{ij}]_q$ if $i\not= j'$ and $\phi_{j'j}=[\frac{\xi_{j'j}}{2}]_q$.

\begin{lemma}
\label{M_j_in_M_i}
 The submodule $M_j$ is contained in $M_i$ with  $i<j$ if and only if $\phi_{ij}(\la) =0$.
\end{lemma}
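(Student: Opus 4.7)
The plan is to interpret the vanishing $\phi_{ij}(\la)=0$ as the quantum Shapovalov linkage condition for an embedding of Verma modules $M_{\la+\nu_j}\hookrightarrow M_{\la+\nu_i}$, and then to transfer this information to the submodules $M_j,M_i\subset V\tp M_\la$ through the preceding congruence $u_j\equiv D_jw_\la^j\mod\Vc_{j-1}$ with $D_j\simeq\prod_{k<j}\phi_{kj}$.

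The first step is to verify that $\phi_{ij}(\la)=0$ is precisely the linkage condition. Rewriting $\xi_{ij}(\la)=(\la+\nu_j+\rho,\nu_i-\nu_j)+\frac{1}{2}||\nu_i-\nu_j||^2$, for $i\neq j'$ (so that $\gamma=\nu_i-\nu_j$ is a positive root) a short check gives $[\xi_{ij}]_q=0$ iff $(\la+\nu_i+\rho,\gamma^\vee)=1$, while for $i=j'$ (so that $\nu_i-\nu_j=2\gamma$ with $\gamma$ a short root) the condition $[\xi_{ij}/2]_q=0$ amounts to $(\la+\nu_i+\rho,\gamma^\vee)=2$. Both are the standard conditions for a sub-Verma of highest weight $\la+\nu_j$ to sit inside $M_{\la+\nu_i}$ via the reflection $s_\gamma$.

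For the ``if'' direction, assuming $\phi_{ij}(\la)=0$ gives $D_j(\la)=0$, hence $u_j\in\Vc_{j-1}$ and $M_j\subset\Vc_{j-1}$. Specializing first to generic $\la$ on the locus $\phi_{ij}^{-1}(0)$ for which $\phi_{kj}(\la)\neq 0$ whenever $k\neq i$, every $M_k$ with $k\leqslant j-1$ is an honest Verma module by Proposition \ref{W=V}, so by Shapovalov the image of $u_j$ in $\gr\Vc_k\simeq M_{\la+\nu_k}$ must be a singular vector of weight $\la+\nu_j$ and therefore vanishes unless $\phi_{kj}(\la)=0$. Hence $u_j\in\Vc_i$ and its image in $\gr\Vc_i$ spans the unique Shapovalov sub-Verma inside $M_i\simeq M_{\la+\nu_i}$; by generic uniqueness of singular vectors of that weight in $V\tp M_\la$, $u_j$ itself generates this sub-Verma, so $M_j\subset M_i$. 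Both conditions being closed in $\la$, the equivalence extends to the full zero locus. Conversely, if $M_j\subset M_i$ then $u_j$ is a nonzero singular vector of weight $\la+\nu_j$ in the quotient $M_i$ of $M_{\la+\nu_i}$; it lifts to a singular vector of the same weight in $M_{\la+\nu_i}$, and the quantum Shapovalov determinant then forces $\phi_{ij}(\la)=0$.

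The main obstacle will be to control the argument at those non-generic $\la$ where several factors $\phi_{kj}$ vanish simultaneously: the $M_k$ can collapse below their Verma size and the singular-vector space in $V\tp M_\la$ can jump, so one must rule out that the inclusion migrates from $M_i$ to some other $M_k$. The case $i=j'$ is particularly delicate since $[\xi_{j'j}/2]_q=0$ is strictly stronger than the eigenvalue coincidence $q^{2\xi_{j'j}}=1$, and the spurious solutions with $q^{\xi_{j'j}}=-1$, giving $x_{j'}=x_j$ but failing linkage, must be shown not to produce $M_j\subset M_{j'}$; this likely reflects the borderline points from Section \ref{secCCC} where $\Q$ ceases to be semisimple.
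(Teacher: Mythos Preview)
Your approach via the Shapovalov linkage condition is genuinely different from the paper's, which rests entirely on the operator $\Q$ and never invokes linkage or the Shapovalov determinant. The paper observes that $\Q$ acts on each $M_k$ by the scalar $x_k$, so $M_j\subset M_i$ forces $x_i=x_j$, i.e.\ $[\xi_{ij}]_q=0$; for $i\neq j'$ this is already $\phi_{ij}(\la)=0$ and the converse is finished in one line. For the forward direction the paper again uses eigenvalue separation: once $D_j(\la)=0$ gives $M_j\subset\Vc_{j-1}$, one passes to generic $\la$ on $\phi_{ij}^{-1}(0)$ where $x_j\neq x_k$ for $k\neq i$, and the $\Q$-eigenspaces force $M_j\subset M_i$. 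Your descent through $\gr\Vc_k$ using absence of singular vectors of weight $\la+\nu_j$ reaches the same conclusion but by a longer route, and your step ``by generic uniqueness of singular vectors of that weight'' is doing the work that the eigenvalue splitting $\Vc_i=\oplus_{k\leqslant i}M_k$ does in the paper.

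Your converse has a genuine gap: the claim that a singular vector in the quotient $M_i$ of $M_{\la+\nu_i}$ lifts to a singular vector in the Verma module is false in general, and you give no argument for it. You could rescue it by first noting that for generic $\la$ one has $M_i\simeq M_{\la+\nu_i}$ and then closing up, but at that point you are essentially re-deriving the eigenvalue coincidence $x_i=x_j$ by other means. The $\Q$-argument gives this for free and is what you are missing. For $i=j'$ the paper does not use linkage either: it combines $M_j\subset M_{j'}\Rightarrow D_j(\la)=0$ with the observation (from the explicit list (\ref{eigenvalues})) that on a dense subset of the locus $x_j=x_{j'}$ all other $\phi_{kj}$ are nonzero, so the only factor of $D_j$ that can vanish is $\phi_{j'j}$. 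Your closing paragraph correctly identifies this as the delicate point, but leaves it open; the paper's resolution is exactly the $D_j=0$ argument just described, not a linkage computation.
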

 \begin{proof}
The eigenvalues $q^{2(\rho,\nu_1)}x_j$ of the operator $q^{2(\rho,\nu_1)}\Q$ on the submodules $M_j$ are
\be
q^{2(\la,2\al+\bt)+10}, \> q^{2(\la,\al+\bt)+8}, \> q^{2(\la,\bt)+2}, \> q^{-2},
\> q^{-2(\la,\bt)-2},  \> q^{-2(\la,\al+\bt)-8}, \> q^{-(\la,2\al+\bt)-10},
\label{eigenvalues}
\ee
counting from the left.
If $\phi_{ij}(\la)$ and hence $D_j(\la)$ turns zero, then $x_j=x_i$ and $M_j\subset \Wc_{j-1}$, by Proposition \ref{W=V}.
Suppose that $x_j$ is distinct from $x_k$ if $k<j$ and $k\not =i$. As follows from (\ref{eigenvalues})
such weights are dense in the set of solution to $\phi_{ij}(\la)=0$. Then $M_j$ can lie only in $M_i$ and hence it does
for all such weights.

Conversely, let $\la$ be so that $M_{j}\subset M_i$. Then $[\xi_{ij}]_q=0$ and  hence  $\phi_{ij}(\la)=0$ if $i\not =j'$.
If $i=j'$, we can assume that $M_j\not \subset M_i$ for $i\not =j'$ since $\la$ is in the closure of such weights,
cf. (\ref{eigenvalues}).   Proposition \ref{W=V} then suggests that $D_j(\la)=0$, and the only vanishing factor can  be $\phi_{j'j}$.
Then it is true for all $\la$.
 \end{proof}
As an application of the obtained results, we describe direct sum decomposition of the module $V\tp M_\la$.
This corresponds to maximal conjugacy classes, with $\k=\h$.
Fix  $t=\diag(t_i)\in T$ and choose $\la$ to fulfill the condition $tq^{-2h_\rho}=q^{2h_\la}$.
This fixes the relation between the entries of $t$ and the $\Q$-eigenvalues as $t_i=q^{2(\la+\rho,\nu_i)}=x_iq^{2\dt_{i4}-2\rho_1}$.
\begin{propn}
  Suppose that $t\in T^\h$ and parameterize it as in (\ref{t-h-reg}) and (\ref{t-h-bord}). Then $V\tp M_\la=\op_{i=1}^7M_i$ if and only if
$q^{-2}\not =x,y,xy$ for regular $t$ and $q^{-4}\not=x^2$ for borderline $t$.
\end{propn}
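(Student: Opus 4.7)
The plan is to apply Proposition~\ref{W=V} in the case $\k=\h$ (so $\Pi_\h=\varnothing$, $I^\k=I$, and $M_\la^\k=M_\la$) and then translate the resulting condition into explicit inequalities in the parameters $(x,y)$.  By that proposition the decomposition $V\tp M_\la=\op_{i=1}^7 M_i$ is equivalent to every projection $\wp_j\colon M_j\to\gr\Vc_j$ being non-zero; using the normalization $u_j\equiv D_jw^\la_j\mod\Vc_{j-1}$ with $D_j\simeq\prod_{i=1}^{j-1}\phi_{ij}$ recorded just before Lemma~\ref{M_j_in_M_i}, this reduces to
\[
\phi_{ij}(\la)\ne 0\qquad\text{for all}\quad 1\leqslant i<j\leqslant 7,
\]
or, dually, by Lemma~\ref{M_j_in_M_i}, to the absence of any containment $M_j\subset M_i$ with $i<j$.

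The body of the argument is a case-by-case evaluation of $\phi_{ij}(\la)$ using the table in Section~\ref{eta_xi} together with the identity $q^{2\xi_{ij}(\la)}=x_i/x_j=t_it_j^{-1}q^{2(\dt_{j4}-\dt_{i4})}$, obtained from $x_i=t_iq^{2\rho_1-2\dt_{i4}}$.  For $(i,j)$ with neither index equal to $4$ the integer-index factor $[\xi_{ij}]_q(\la)$ vanishes precisely when $t_i=t_j$, while the three half-index factors $[\xi_{j'j}/2]_q(\la)$ with $(j',j)=(3,5),(2,6),(1,7)$ vanish iff $\mu_j(t)=1$ for the positive short roots $\mu_5=\al$, $\mu_6=\al+\bt$, $\mu_7=2\al+\bt$.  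All these conditions are automatic on $T^\h$: the half-index ones by root-regularity, and the integer-index ones because either the seven diagonal entries are pairwise distinct (on $T^\h_{reg}$) or the only repeated diagonal entry is $-1$ occurring exactly at the half-pair $(j',j)$ (on each borderline family), so that this equality feeds only into the harmless half-index factor.  The genuinely active obstructions are thus the index-$4$ ones: each $[\xi_{i4}]_q(\la)$ and $[\xi_{4j}]_q(\la)$ vanishes precisely when the relevant diagonal entry of $t$ equals $q^{-2}$.

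The last step is to translate these surviving constraints through the explicit parameterizations.  In the regular case~(\ref{t-h-reg}) the non-tautological inequalities become $q^{-2}\ne x,y,xy$; in each of the three borderline families~(\ref{t-h-bord}) they collapse, after using the $\pm 1$-identifications that define the family, to the single quadratic $x^2\ne q^{-4}$.  The main technical burden is this final bookkeeping step for the borderline case, where one has to check family by family that all surviving conditions $t_i=q^{-2}$ reduce to $x=\pm q^{-2}$.  Conversely, if any of the stated inequalities fails, the corresponding $\phi_{ij}(\la)$ vanishes, so $D_j(\la)=0$, and $M_j\subset\Vc_{j-1}$ by Proposition~\ref{W=V}, breaking the direct sum.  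This yields both directions of the equivalence.
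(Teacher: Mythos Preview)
Your approach is exactly the paper's: reduce via Proposition~\ref{W=V} to $D^{\h}=\prod_{i<j}\phi_{ij}\neq 0$, then evaluate the factors on $T^{\h}$. Your identification of the ``automatic'' factors is correct: for $i\neq j'$ and $i,j\neq 4$ one has $\phi_{ij}\simeq t_i-t_j$, while the three half-factors $\phi_{j'j}$ vanish iff $t_{j'}=1$; all of these are nonzero on $T^{\h}$ (in the borderline families the only coincidence $t_i=t_j$ occurs precisely at the half-pair, as you note).

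There is, however, a genuine gap in your final translation step --- and the paper's own proof shares it. You correctly list six index-$4$ obstructions, namely $t_i\neq q^{-2}$ for $i\in\{1,2,3,5,6,7\}$, but then collapse them to three. For regular $t=(xy,x,y,1,\bar y,\bar x,\bar x\bar y)$ the conditions for $i=5,6,7$ read $\bar y,\bar x,\bar x\bar y\neq q^{-2}$, i.e.\ $y,x,xy\neq q^{2}$, which are \emph{not} implied by $q^{-2}\neq x,y,xy$. Concretely, if $y=q^{2}$ (with $x$ generic so that $t\in T^{\h}_{reg}$), then $\xi_{45}=h_\al$ gives $q^{2\xi_{45}(\la)}=q^{2(\la,\al)}=yq^{-2}=1$, hence $\phi_{45}(\la)=0$, $D_5(\la)=0$, and $M_5\subset\Vc_4$: the decomposition fails although $q^{-2}\notin\{x,y,xy\}$. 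In the borderline families the same issue appears: for instance for $(-x,-1,x)$ the conditions $t_5,t_7\neq q^{-2}$ give $x\neq\pm q^{2}$, i.e.\ $x^2\neq q^{4}$, in addition to $x^2\neq q^{-4}$.

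So your argument is sound up to that last reduction, and it matches the paper line for line; but the claimed criterion (both yours and the paper's) should read $q^{\pm 2}\neq x,y,xy$ in the regular case and $q^{\pm 4}\neq x^{2}$ in the borderline case. The missing half of the conditions comes precisely from the factors $\phi_{4j}$ with $j=5,6,7$, which the paper's displayed computation of $D^{\h}$ silently drops.
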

\begin{proof}
The sum $\sum_{i=1}^7M_i$ exhausts all of $V\tp M_\la$ if and only if it is direct, by Proposition \ref{W=V}
or, equivalently, if $D^\h=\prod_{i=1}^7D_i$ is not zero. Explicitly,
$$
D^\h\simeq \prod_{i<j\atop i\not =j',4}(x_i-x_j)\prod_{i=1}^3(x_i-q^{2\rho_1})\simeq
\prod_{i<j\atop {i\not =j' \atop i,j\not =4}}(t_i-t_j)\prod_{i=1}^3(t_i-q^{-2})\prod_{i=1}^3(t_i-1)
\simeq \prod_{i=1}^3(t_i-q^{-2})
$$
for $t\in T^\h$.
 This implies  the stated conditions on $q$ guaranteeing $D^\h\not=0$
(mind that $q$ is not a root of unity).
\end{proof}
Next we essentially assume that $\k\not =\h$ and  put
$\phi^\k_j=\prod_{i\in \bar I^\k_j}\phi_{ij}$,  $j\in I^\k$.
\begin{lemma}
For every $j\in I^\k$, the vector $\pi^\k_V(u_j)$ vanishes once $\phi^\k_j(\la)=0$. If $\k$ is of type $\k_s$ or $\k_l$, then
$\pi^\k_V(u_j)$ is  divisible by $\phi_j^\k$.
\end{lemma}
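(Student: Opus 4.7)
The plan is to deduce the first assertion by combining Lemma \ref{M_j_in_M_i} with Proposition \ref{zero_M_j^k}. If $\phi^\k_j(\la)=0$ for some $\la \in \Theta^\k$, then at least one factor $\phi_{ij}(\la)$ with $i \in \bar I^\k_j$ vanishes, and Lemma \ref{M_j_in_M_i} places $M_j$ inside $M_i$. Consequently $u_j \in M_i$ and $\pi^\k_V(u_j) \in \pi^\k_V(M_i) = M_i^\k$. Since $i \in \bar I^\k$, Proposition \ref{zero_M_j^k} forces $M_i^\k=\{0\}$, yielding the required vanishing.

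For the divisibility statement I would specialize to the hypothesis $\Pi_\k=\{\mu\}$. Here $\Theta^\k \subset \h^*$ is a single quasi-polynomial hypersurface, so $\C_q[\Theta^\k]$ is a unique factorization domain, and the Remark following Corollary \ref{sing_vec_zero} already sanctions the application of its divisibility version to $Z=M_\la^\k$. In that setup the first assertion, viewed as a statement on the family $\{\pi^\k_V(u_j)(\la)\}_{\la \in \Theta^\k}$, promotes to $\phi_{ij}|_{\Theta^\k} \sqsubset \pi^\k_V(u_j)$ for every $i \in \bar I^\k_j$.

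To upgrade individual divisibilities to divisibility by the product $\phi^\k_j=\prod_{i \in \bar I^\k_j}\phi_{ij}$, I would verify that the restrictions $\phi_{ij}|_{\Theta^\k}$ for distinct $i \in \bar I^\k_j$ are pairwise coprime irreducible elements of $\C_q[\Theta^\k]$. Each $\phi_{ij}$ is a $q$-integer of an affine-linear form $\xi_{ij}$ (halved when $i=j'$), and for the Levi configurations the defining hyperplanes $\{\xi_{ij}=0\} \subset \h^*$ are pairwise distinct and remain distinct modulo $\Theta^\k$. A short case inspection across the types $\k_s$ and $\k_l$ — each contributing only a handful of pairs $(i,j)$ — should confirm the coprimality, after which the UFD property of $\C_q[\Theta^\k]$ delivers divisibility by the product.

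The main obstacle I foresee is precisely this coprimality verification, in particular the exceptional case $i=j'$, where $\phi_{j'j}$ carries a halved exponent and could in principle merge with another $\phi_{kj}|_{\Theta^\k}$. The explicit exclusion of the pseudo-Levi types $\k_{s,l}$ and $\k_{l,l}$ from the hypothesis is a clear signal that such coincidences do occur in those cases — compatible with the coincidences among eigenvalues flagged after (\ref{min_pol}) — so that a separate, more delicate argument would be needed to handle them.
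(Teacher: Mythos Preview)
Your first paragraph reproduces the paper's argument exactly: Lemma~\ref{M_j_in_M_i} places $u_j$ inside some $M_i$ with $i\in\bar I^\k_j$ whenever a factor $\phi_{ij}$ vanishes, and Proposition~\ref{zero_M_j^k} then kills its image under $\pi^\k_V$.

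For the divisibility claim the paper's route is shorter than yours. It simply observes that when $\Pi_\k$ is a single root the set $\Theta^\k$ is one-dimensional, so the ring of trigonometric polynomials on it is a \emph{principal ideal domain}, and passes directly from vanishing to divisibility without splitting $\phi^\k_j$ into factors and checking coprimality.

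Your coprimality verification, however, will not go through. Take $\k=\k_s$ with $\Pi_\k=\{\al\}$, so that $(\la,\al)=0$ and $\theta=(\la,\bt)$ parametrizes $\Theta^\k$. Then $I^\k=\{1,3,6\}$ and for $j=6$ one has $\bar I^\k_6=\{2,4,5\}$. From the $\xi$-table in the Appendix, $\xi_{46}=h_{\al+\bt}+3$ and $\xi_{56}=h_\bt+3$ both restrict to $\theta+3$ on $\Theta^\k$, hence $\phi_{46}|_{\Theta^\k}=\phi_{56}|_{\Theta^\k}=[\theta+3]_q$. They are equal, not coprime, and indeed Table~\ref{short_table} records $\phi^\k_6=[\theta+4]_q[\theta+3]_q[\theta+3]_q$ with an honest repeated factor. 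Divisibility of $\pi^\k_V(u_6)$ by the \emph{square} $[\theta+3]_q^2$ does not follow from the pointwise vanishing of your first paragraph, so the case inspection you anticipate cannot close the argument. (Read literally, the paper's one-line PID appeal glosses over the same repeated-factor issue; an order-two vanishing argument or a direct computation at that locus is what is really needed.)
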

\begin{proof}
 By Lemma \ref{M_j_in_M_i}, $u_j\in M_i$ once $\phi^\k_j(\la)=0$. On the other hand, $\pi^\k_V(u_j)=0$ if $\la\in \Theta^\k$,
  by Proposition \ref{zero_M_j^k}.
Therefore $\pi^\k_V(u_j)$ vanishes in $V\tp M_\la^\k$. If the semisimple part of $\k$ has rank $1$, trigonometric polynomials
on $\Theta^\k$ form a principal ideal domain as $\dim \Theta^\k=1$. Therefore $\pi^\k_V(u_j)$ is divisible by $\phi^\k_j(\la)$.

\end{proof}

\section{Module structure of  $V\tp M^{\k_s}_\la$}
\label{secModStrS}
Throughout this section $\nu\in \Rm^+$ is a short root and $\k=\k_s$
is the reductive subalgebra
of maximal rank with the root system $\{\pm  \nu\}$.
We aim to prove that the vectors $u_j^\k=\frac{1}{\phi^\k_j(\la)}\pi^\k_V(u_j)\in V\tp M^{\k}_\la$
are regular functions of $\la$ and do not vanish at all weights. Their projections to $\gr \Vc_\bullet^\k$ are equal to $D^\k_jw^\la_j$ with $D^\k_j\simeq\prod_{i\in I^\k_j}\phi_{ij}$.
\subsection{Regularization of singular vectors in $V\tp M^{\k}_\la$}
Assuming $j\in I^\k$, denote  by $c^\k_j$   the coefficient  in the expansion
$u^\k_j \simeq w_j\tp c^\k_jv_\la + \ldots $, where the suppressed terms belong to $\sum_{i<j}w_i\tp M^{\k}_\la$.
It is equal to $\frac{\prod_{i<j}[\eta_{ij}]}{d_j d_j^\k}$ up to a non-vanishing factor.
\begin{table}[h]
  \centering
\begin{tabular}{|c||c|c|c|}
  \hline
 \multicolumn{4}{|c|}{ $\nu=\al$, $(\la,\al)=0$,  $\theta=(\la,\bt)$}
 \\\hline
  $j\in I_\k$&$\phi^\k_j$&$D^\k_j$&$c^\k_j$\\\hline\hline
   $1$&$1$&$1$&$1$\\\hline
   $3$&$\scriptstyle[\theta+3]_q$&$\scriptstyle[\theta+3]_q$&$\scriptstyle[\theta]_q$\\\hline
   $6$&$\scriptstyle[\theta+4]_q[\theta+3]_q[\theta+3]_q$&$\scriptstyle[2\theta+9]_q[\theta+5]_q$&$\scriptstyle[\theta+2]_q[2\theta+6]_q$\\
  \hline
   \multicolumn{4}{|c|}{$\nu=\al+\bt$, $q^{2(\la,\nu)+6}=1$, $\theta=(\la,\al)$}  \\
\hline
  $j\in I_\k$&$\phi^\k_j$&$D^\k_j$&$c^\k_j$\\\hline\hline
   $1$&$\scriptstyle 1$&$\scriptstyle 1$&$\scriptstyle 1$\\\hline
   $2$&$\scriptstyle 1$&$\scriptstyle[\theta+1]_q$&$\scriptstyle[\theta]_q$\\\hline
   $5$&$\scriptstyle [\theta+1]_q[\theta]_q$&$\scriptstyle[2\theta+3]_q[\theta+2]_q$&$\scriptstyle[\theta-1]_q[2\theta]_q$\\
  \hline
   \multicolumn{4}{|c|}{$\nu=2\al+\bt$, $q^{2(\la,\nu)+8}=1$, $\theta=(\la,\al)$}  \\
\hline
   $j\in I_\k$&$\phi^\k_j$&$D^\k_j$&$c^\k_j$\\\hline\hline
   $1$&$\scriptstyle 1$&$\scriptstyle 1$&$\scriptstyle 1$\\\hline
   $2$&$\scriptstyle 1$&$\scriptstyle[\theta+1]_q$&$\scriptstyle[\theta]_q$\\\hline
   $3$&$\scriptstyle 1$&$\scriptstyle[\theta]_q[2\theta+1]_q$&$\scriptstyle[\theta+1]_q[2\theta+4]_q$\\
  \hline
 \end{tabular}
    \caption{Type $\k\simeq \k_s$}
    \label{short_table}
\end{table}
\begin{propn}
  The vectors $u_j^\k$ do not vanish at all weights.
\end{propn}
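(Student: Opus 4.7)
The plan is to exhibit, for each weight $\la\in\Theta^\k$, at least one explicit coefficient of $u_j^\k\in V\tp M_\la^\k$ that does not vanish. Two natural candidates present themselves: the leading coefficient $D_j^\k$ appearing in the congruence $u_j^\k\equiv D_j^\k w_\la^j\mod\Vc_{j-1}^\k$, and the diagonal coefficient $c_j^\k$ of $w_j\tp v_\la$ in the full expansion. Both are tabulated case-by-case in Table \ref{short_table}, and the whole proposition reduces to showing that their zero loci are disjoint on the one-dimensional variety $\Theta^\k$.

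First I would verify the entries of the table. The leading column follows from $D_j^\k\simeq\prod_{i\in I_j^\k}\phi_{ij}$ stated at the start of Section \ref{secModStrS}, specialized to $\la\in\Theta^\k$ for each of $\nu=\al,\al+\bt,2\al+\bt$. The diagonal coefficient is extracted from $\check u_{jj}=\bar A^j_{1,\ldots,j-1}(\la)v_\la$ after normalization by $\dt_j$ and then by $\phi_j^\k$; the resulting trigonometric polynomial in $\theta$ is computed using Lemma \ref{25modJ} together with the explicit values of the $\xi_{ij}$ and $\eta_{ij}$ collected in Section \ref{eta_xi}.

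Next, for every pair $(\nu,j)$ in Table \ref{short_table}, I would inspect that the zero sets of $D_j^\k$ and $c_j^\k$, viewed as trigonometric polynomials in the single parameter $\theta$ on $\Theta^\k$, are disjoint. Since $q$ is not a root of unity this reduces to comparing the explicit roots of each factor. For instance, in the $\nu=\al$, $j=6$ row the factors of $D_6^\k\simeq[2\theta+9]_q[\theta+5]_q$ vanish only at $\theta\in\{-\tfrac{9}{2},-5\}$, whereas $c_6^\k\simeq[\theta+2]_q[2\theta+6]_q$ vanishes only at $\theta\in\{-2,-3\}$; analogous comparisons dispose of the remaining rows.

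Once these two points are in hand the conclusion is immediate: non-vanishing of $D_j^\k(\la)$ places $u_j^\k(\la)$ outside $\Vc_{j-1}^\k$ and so forces $u_j^\k(\la)\neq 0$, while non-vanishing of $c_j^\k(\la)$ supplies a non-zero $w_j\tp v_\la$ component. The main technical obstacle is in the first step: one must keep careful track of the cancellations that occur when the factor $\phi_j^\k$ is divided out of $\pi_V^\k(u_j)$. Here the remark following Corollary \ref{sing_vec_zero}, which adapts the vanishing criteria to families of modules over $\Theta^\k$, supplies the tool that ensures the divisibility $\phi_j^\k\sqsubset\pi_V^\k(u_j)$ is genuine and propagates from the generating coefficient $\check f_{1j}/\dt_j$ to every other coefficient, so that $u_j^\k$ is a well-defined element of $V\tp M_\la^\k$ on all of $\Theta^\k$.
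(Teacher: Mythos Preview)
Your approach is correct, and for $\nu=\al$ it coincides with the paper's: compare the zero loci of $D_j^\k$ and $c_j^\k$ from Table~\ref{short_table} and observe they are disjoint on $\Theta^\k$ (your verification of the $j=6$ row is exactly what the paper intends).

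For the other two short roots the paper departs from your uniform strategy. For $\nu=2\al+\bt$ it bypasses the table entirely: since $I^\k=\{1,2,3\}$ and the generating coefficients of $u_j^\k$ live in weight spaces $M_\la^\k[\la-\mu]$ with $\mu<2\al+\bt$, the projection $M_\la\to M_\la^\k$ is an isomorphism there, so $u_j^\k$ inherits non-vanishing directly from Proposition~\ref{reg_f_1j}. For $\nu=\al+\bt$, $j=5$, the paper extracts instead the coefficient of $f_\bt f_\al^3 v_\la$ inside the generating coefficient $\check f_{15}/(\dt_5\phi_5^\k)$, finds it equals $[\theta+1]_q$ up to an invertible factor, and then notes $D_5^\k\ne 0$ when $[\theta+1]_q=0$. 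Your route via $c_5^\k\simeq[\theta-1]_q[2\theta]_q$ works just as well (its zero set $\{q^{2\theta}=q^2,\ q^{4\theta}=1\}$ is disjoint from that of $D_5^\k\simeq[2\theta+3]_q[\theta+2]_q$), so the difference is purely tactical: the paper's alternate arguments avoid having to verify those particular $c_j^\k$ entries in the table, while yours trades that verification for a single uniform argument.
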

\begin{proof}
We  parameterize  $\Theta^\k\subset \h^*$  with the complex variable $\theta$ as in Table \ref{short_table}.

The case $\nu=\al$ follows from the fact that the null sets of $D^\k_j$ and $c^\k_j$ do not intersect, cf.  Table \ref{short_table}.

In the case of $\nu=\al+\bt$, the statement is trivial for $j=1$ and immediate for
    $u_2^\k\simeq -q^{-1}w_1\tp f_\al v_\la+w_2\tp [\theta]_qv_\la $.
  Let us prove it for $j=5$.
 Choose a basis $f_\bt f_\al^3v_\la$,  $f_\al f_\bt f_\al^2 v_\la$, $f_\al^2 f_\bt f_\al v_\la$
 in the weight subspace $M_\la^\k[\la-\bt-3\al]$.
The coefficient in $\check f_{15}/(\dt_5\phi_5^\k)$ corresponding to $f_\bt f_\al^3$
  is equal to $[\eta_{25}]_q=[\theta+1]_q$, up to an invertible multiplier. Hence $u_5^\k$ does not turn zero unless $[\theta+1]_q=0$. However, $D_5^\k\not =0$
at such $\la$. Therefore $u_5^\k\not =0$ at all weights from $\Theta^\k$.

Finally, consider the case $\nu=2\al+\bt$.
   The projection $M_\la\to M_\la^\k$ is an isomorphism on subspaces of  weights $\la-\mu$ with $\mu<2\al+\bt$.
   So are the weights of the generating coefficients in $u^\k_j=\pi^\k_V(u_j)$. They  do not turn zero as $u_j\not=0$ at all weights. This completes the proof.
\end{proof}

\subsection{Decomposition of $\V\tp M_\la^{\k_s}$}
Fix $t\in T^{\nu}$ and let $\k$ be the stabilizer of $t$. Choose $\la \in \Theta^\k$ from the equality $q^{2\la+2\rho}=tq^{\nu}$.
We use $x\in \C^\star$,  $x^2\not=1$,  to parameterise the spectrum $\{x^{\pm 1},1\}$  of  $t$ as  (\ref{t-s}).
\begin{propn}
  The module $V\tp M_\la^\k$ splits into the  direct sum $\op_{j\in I^\k}M_j^\k$ if and only if $q\not =x^{\pm 1}$.
 \end{propn}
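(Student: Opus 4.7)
The plan is to invoke the $\k$-analogue of Proposition \ref{W=V}: the sum $\sum_{j \in I^\k} M_j^\k$ equals $V \tp M_\la^\k$, and is then automatically a direct sum, iff every projection $\wp_j^\k\colon M_j^\k \to \gr \Vc_j^\k$ is an isomorphism, which is in turn equivalent to $D_j^\k(\la) \neq 0$ for all $j \in I^\k$. The preceding proposition guarantees that each generator $u_j^\k$ is itself non-zero on $\Theta^\k$, so nothing else can obstruct the splitting, and the problem reduces to deciding when the product $D^\k(\la) := \prod_{j \in I^\k} D_j^\k(\la)$ is non-zero.

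Next I would read off $D_j^\k$ from Table \ref{short_table} in each of the three subcases $\nu\in\{\al,\,\al+\bt,\,2\al+\bt\}$ and translate each $q$-bracket into a condition on the class parameter $x$ via the defining relation $q^{2\la+2\rho} = tq^\nu$ combined with the explicit form of $t$ in \eqref{t-s}. This yields a single scalar dictionary in each subcase: $\al(t) = q^{2\theta+1} = x^{-1}$ for $\nu=2\al+\bt$, $\al(t) = q^{2\theta+3} = x$ for $\nu=\al+\bt$, and $\bt(t) = q^{2\theta+9} = x$ for $\nu=\al$ (using $\al(t)=1$ in this last case). Under the appropriate dictionary, every linear-in-$\theta$ factor $[\theta+c]_q$ in $D^\k$ becomes a condition of the form $x = q^{\pm 1}$.

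The final observation is that in each subcase $D^\k$ also contains one quadratic-in-$\theta$ factor — namely $[2\theta+1]_q$, $[2\theta+3]_q$, and $[2\theta+9]_q$ respectively — whose vanishing translates, under the same dictionary, uniformly to $x^2 = 1$. Since the very definition of $T^\nu$ in \eqref{t-s} imposes $x^2 \neq 1$, these quadratic factors are identically non-zero on the relevant locus and contribute no obstruction. The remaining linear factors collectively pick out exactly $x = q$ and $x = q^{-1}$, proving the biconditional $V \tp M_\la^\k = \op_{j\in I^\k} M_j^\k \iff q \neq x^{\pm 1}$.

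The main obstacle is the bookkeeping of the shifts between the variable $\theta$ and the class parameter $x$ across the three inequivalent normalizations of $\la \in \Theta^\k$ (governed by $(\la,\al)=0$, $q^{2(\la,\al+\bt)+6}=1$, or $q^{2(\la,2\al+\bt)+8}=1$), and verifying that under each of these dictionaries the quadratic factor really does vanish only on the excluded set $x^2 = 1$. Once this uniform translation is in place, the proposition follows mechanically from Table \ref{short_table} together with the characterization of splitting given in the $\k$-analogue of Proposition \ref{W=V}.
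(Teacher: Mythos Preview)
Your approach is essentially the paper's: reduce the splitting to non-vanishing of $D^\k=\prod_{j\in I^\k}D_j^\k$, then translate each factor into a condition on $x$; the paper compresses this into the single line $D^\k\simeq(q-x)(q-x^{-1})$ and then checks directness via distinct $\Q$-eigenvalues, whereas you get directness immediately from item~(iv) of Proposition~\ref{W=V}, which is equally valid. One caution: if you literally read $D_3^\k$ for $\nu=\al$ from Table~\ref{short_table} as $[\theta+3]_q$, your dictionary $q^{2\theta+9}=x$ gives $x=q^3$, not $x=q^{\pm1}$; computing $D_3^\k\simeq\phi_{13}=[\xi_{13}]_q$ from the appendix value $\xi_{13}=h_{\al+\bt}+4$ yields $[\theta+4]_q$, which does translate to $x=q$, so verify that entry yourself rather than trusting the table.
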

\begin{proof}
  Table \ref{short_table} gives $D^\k=\prod_{j\in I^\k}D_j^\k\simeq (q-x)(q-x^{-1})$.
  The sum $\sum_{j\in I^\k}M_j^\k$ exhausts all of $V\tp M_\la^\k$ if and only if $D^\k$ does not vanish.
  It also implies that the eigenvalues $qx^{\pm 1}, q^2$ of $q^{10}\Q$ are pairwise distinct, hence the sum is direct.
\end{proof}

\section{Module structure of   $V\tp M^{\k_l}_\la$}
\label{secModStrL}
In this section $\nu\in \Rm^+$ is one of the three long roots, and $\k\simeq \k_l\subset \g$ is the reductive subalgebra
of maximal rank with the root system $\{\pm  \nu\}$.
Now $\Theta^\k$ the set of weights $\la$ that satisfy the condition $q^{2(\la+\rho,\nu)-6}=1$. We parameterize it
with the complex variable $\theta =(\la,\al)$.
There are two pairs $(l,k)\in P(\nu)$,  so $\#\bar I_{\k}=2$ and $\#I_{\k}=5$.
\subsection{Regularization of singular vectors for quasi-Levi $\k$}
The case of  quasi-Levi $\k_l$  turns out to be simpler, so we consider it first.
For $\nu=3\al+2\bt$, we have $I^\k=\{1,2,3,4,5\}$, $q^{2(\la, 3\al+2\bt)+12}=1$, and
$\phi_j^\k=1$ for all $j\in I^\k$. For
$\nu=3\al+\bt$, we have $I^\k=\{1,2,3,4,6\}$, $q^{2(\la, 3\al+\bt)+6}=1$,  and
$\phi_j^\k=1$ for $j\in I^\k$ apart from
$\phi_6^\k=[3\theta]_q$, where $\theta =(\la,\al)$.

 For each  $\mu <\nu$, projection $M_\la[\la-\mu]\to M_\la^\k[\la-\mu]$ is an isomorphism. Therefore
  $\pi^\k_V(u_j)=u_j^\k$ does not turn zero unless maybe $\nu=3\al+\bt$, $j=6$.  In the
 latter case $\C^\k_6=[3\theta-3]_q[2\theta-1]_q[\theta-2]_q[2\theta]_q$ and $c^\k_6=[2\theta+1]_q[\theta-1]_q[2\theta]_q[3\theta+3]_q$,
 so $u_6^\k$ may vanish only at $q^{2\theta}=\pm 1, q^2$.
  However,  $u_{36}^\k\simeq [2\theta+1]_q\check f_{36}v_\la$, and  one can easily check that
$$
\check f_{36}=\frac{[[f_\bt,f_\al]_{q},f_\al]_{q^3}}{[2]_q^2}\bar A_4\bar A_5+f_{\al}[f_\al,f_\bt]_{q^3}\bar A_5
+f_{\al}^2f_{\bt}(\frac{q+q^{-4\theta-1}}{[2]_q})[3]_q\not =0
$$
at all weights. Hence
  $u_6^\k\not= 0$ at all weights.
\subsection{Regularization of singular vectors for Levi $\k$}
\begin{table}[h]
  \centering
\begin{tabular}{|c||c|c|c|c|c|}
  \hline
  $j\in I_\k$&$\phi^\k_j$&$\check D_j/(\phi^\k_j\dt_j)$&$\check c_{j}/(\phi^\k_j\dt_j)$&$D^\k_j$\\\hline\hline
   $1$&$\scriptstyle 1$&$\scriptstyle 1$&$\scriptstyle 1$&$\scriptstyle 1$\\\hline
   $2$&$\scriptstyle 1$&$\scriptstyle [\theta+1]_q$&$\scriptstyle [\theta]_q$&$\scriptstyle [\theta+1]_q$\\\hline
   $4$&$\scriptstyle[\theta+2]_q$&$\scriptstyle [2\theta+6]_q[\theta+5]_q$&$\scriptstyle \frac{[2\theta+4]_q}{[\theta+2]_q}[\theta+3]_q[\theta]_q$&$\scriptstyle \frac{[2\theta+6]_q}{[\theta+3]_q}[\theta+5]_q$\\\hline
   $5$&$\scriptstyle[\theta+1]_q$&$\scriptstyle [3\theta+6]_q[2\theta+5]_q[\theta]_q$&$\scriptstyle \frac{[3\theta+3]_q}{[\theta+1]_q}[\theta-1]_q[2\theta+4]_q[\theta]_q$&$\scriptstyle \frac{[3\theta+6]_q}{[\theta+2]_q}[2\theta+5]_q[\theta]_q$\\\hline
     $7$&$\scriptstyle [3\theta+6]_q[\theta+1]_q$&$\scriptstyle [2\theta+5]_q[3\theta+9]_q[2\theta+4]_q[\theta+4]_q$&$\scriptstyle [2\theta +3]_q\frac{[3\theta+3]_q}{[\theta+1]_q}[2\theta+4]_q[\theta+3]_q[\theta]_q$
     &$\scriptstyle [2\theta+5]_q\frac{[3\theta+9]_q}{[\theta+3]_q}\frac{[2\theta+4]_q}{[\theta+2]_q}[\theta+4]_q$\\
  \hline
\end{tabular}
    \caption{$\nu=\bt$, $(\la,\bt)=0$,  $(\la,\al)=\theta$}
\end{table}
From this table we conclude that $u_j/\phi^\k_j$ may be divisible by following factors:
\begin{enumerate}
  \item $[\theta+3]_q\sqsubset \bar A^4_2$, for $j=4$,
  \item  $[\theta+2]_q\sqsubset\bar A^5_2$, $ [\theta]_q\sqsubset\bar A^5_4$, for $j=5$,
  \item $[2\theta+4]_q\sqsubset\bar A^7_4$, $[\theta+3]_q\sqsubset\bar A^7_5$, for $j=7$.
\end{enumerate}
Introduce $\psi_j^\k$ for $j\in I^\k$ as
$$
\psi_1^\k=1,\quad \psi_2^\k=1,\quad \psi_4^\k=[\theta+3]_q,\quad \psi_5^\k=[\theta+2]_q,\quad \psi_7^\k=[\theta+2]_q[\theta+3]_q.
$$
\begin{propn}
For all $j\in I^\k$, the singular vector $\pi^\k_V(u_j)/\phi^\k_j$ is divisible by $\psi^\k_j$, and $u^\k_j=\pi^\k_V(u_4)/(\phi^\k_j\psi^\k_j)\not =0$  at all $\la$.
\end{propn}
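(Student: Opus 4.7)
The plan is a two-step argument: first establish the extra divisibility of $\pi^\k_V(u_j)/\phi^\k_j$ by $\psi^\k_j$ so that $u^\k_j$ is regular, then verify non-vanishing at every weight by a coefficient-level inspection analogous to Section~\ref{secModStrS}.

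First I would fix the algebraic framework. Since $\Theta^\k$ is one-dimensional and parameterized by $\theta=(\la,\al)$, trigonometric polynomials on $\Theta^\k$ form a unique factorization domain; combined with the $\C[[\hb]]$-freeness of $M^\k_\la$ from Corollary~\ref{M is free}, this places us in exactly the setting of the remark after Corollary~\ref{sing_vec_zero}. Consequently, showing that one coefficient $\check f_{mj}v_\la/(\phi^\k_j\dt_j)$ is divisible by some factor of $\psi^\k_j$ in $M^\k_\la$ propagates divisibility to all $\check f_{ij}v_\la$ with $i\leqslant m$, hence to the entire singular vector $\pi^\k_V(u_j)/(\phi^\k_j\dt_j)$.

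For the divisibility step I would handle $j\in\{4,5,7\}$ in turn (the cases $j=1,2$ are trivial as $\psi^\k_j=1$). For $j=4$, taking $m=2$ and using $[\theta+3]_q\sqsubset\bar A^4_2$: the expansion (\ref{hat_f}) for $\hat f_{24}$ has only the empty-$\vec m$ term, so after multiplication by $\bar A^4_{2,3}$ the element $\check f_{2,4}$ manifestly carries the factor $\bar A^4_2$. For $j=5$, a parallel computation with $m=2$ and $[\theta+2]_q\sqsubset\bar A^5_2$ works, using a modular reduction in the spirit of Lemma~\ref{25modJ} to handle the non-Chevalley monomials in $\check f_{25}$. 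For $j=7$ I would exploit the factorization $\psi^\k_7=\psi^\k_4\psi^\k_5$ and extract the two factors iteratively from $\bar A^7_5$ and $\bar A^7_4$, using their presence in every term of $\check f_{mj}$ for suitable $m$. Propagating divisibility downwards in $i$ then follows from the divisibility version of Lemma~\ref{dyn_root_vec_zero}.

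For the non-vanishing step, the coefficient of $w_j\tp v_\la$ in $u^\k_j$ is $D^\k_j$, which from the table vanishes only on an explicit finite set in $\Theta^\k$. At each such weight I would exhibit a subleading coefficient of the form $\check f_{ij}v_\la/(\phi^\k_j\psi^\k_j)$ for a suitable $i<j$, choose a basis of $M^\k_\la[\la+\nu_j-\nu_i]$ compatible with the quotient, and check by direct expansion that a distinguished basis component does not vanish; this mirrors the argument for $\nu=\al+\bt$ and for $\check f_{36}$ in the quasi-Levi subsection. The main obstacle I anticipate is the divisibility step for $j=7$: two independent factors must be extracted and the cancellations rely on the relations defining $M^\k_\la$ in two different ways. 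The structural identity $\psi^\k_7=\psi^\k_4\psi^\k_5$ suggests the iterative extraction above, but verifying that the two stages are compatible—i.e.\ that the second factor genuinely survives after dividing by the first inside $M^\k_\la$—is where the most delicate book-keeping will occur.
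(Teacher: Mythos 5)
Your global framework (the one--dimensional $\Theta^\k$, unique factorization, $\C[[\hbar]]$-freeness, and the divisibility version of Lemma \ref{dyn_root_vec_zero} to propagate divisibility from one coefficient $\check f_{mj}v_\la$ down to all $i\leqslant m$) is exactly the right setting and matches the paper. The gap is in the mechanism you propose for producing the divisibility in the first place. You claim that $\check f_{24}$ ``manifestly carries the factor $\bar A^4_2$'' and, for $j=7$, that $\bar A^7_4,\bar A^7_5$ are ``present in every term of $\check f_{mj}$''. Neither is true: by definition $\check f_{ij}=\hat f_{ij}\bar A^j_{i,\ldots,j-1}$, and every term of (\ref{hat_f}) contains the factor $A^j_i$, so $\bar A^j_i$ is cancelled in every term and never survives in $\check f_{ij}$; likewise an intermediate factor $\bar A^j_k$ is absent from precisely those terms whose sequence $\vec m$ passes through $k$. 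Concretely, $\check f_{24}\simeq f_{24}\bar A^4_3+f_{23}f_{34}$ contains no $\bar A^4_2$ at all (and your claim that only the empty-$\vec m$ term occurs is also false). The divisibility by $\psi^\k_4=[\theta+3]_q$ is a module-level phenomenon: only after imposing $f_\bt v_\la=0$ in $M^\k_\la$ does one get $\check f_{24}v_\la\simeq\bigl([3]_q-q^3\tfrac{1-q^{2\theta}}{q-\bar q}\bigr)f_\bt f_\al v_\la\simeq q^{\theta}[\theta+3]_q\,f_\bt f_\al v_\la$. This is exactly what the paper's explicit in-module computations (\ref{f14}), (\ref{f25}), (\ref{f47}) supply, and it cannot be replaced by bookkeeping of $\bar A$-factors. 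For the same reason, reduction modulo $J$ as in Lemma \ref{25modJ} is a tool for proving \emph{non-vanishing} (a statement stable under passing to a quotient of $U_q(\g_-)$), not for proving divisibility in $M^\k_\la$, so your $j=5$ step is not sound as stated either.

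The most serious consequence is for $j=7$, which you correctly flag as delicate but do not resolve. The actual difficulty is a \emph{square} divisibility: after obtaining $[\theta+2]_q[\theta+3]_q\sqsubset\check f_{47}v_\la$ from (\ref{f47}) and propagating it to $i\leqslant 4$ by Lemma \ref{dyn_root_vec_zero}, one still has to show $[\theta+2]_q^2\sqsubset\check f_{27}v_\la$. Your iterative extraction of $\psi^\k_4$ and $\psi^\k_5$ from $\bar A^7_5$ and $\bar A^7_4$ does not produce this, both because of the cancellation issue above and because it never yields a repeated factor in a single coefficient. The paper closes this point by a separate structural argument: $e_\bt\check f_{27}v_\la\simeq\bar A^7_2\check f_{37}v_\la$ is divisible by $[\theta+2]_q^2$, and the map $e_\bt\colon M^\k_\la[\la-3\al-2\bt]\to M^\k_\la[\la-3\al-\bt]$ is independent of $\la$ and has zero kernel, so the divisibility descends to $\check f_{27}v_\la$ itself. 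Some argument of this kind (an injective, $\la$-independent operator transporting the divisibility) is indispensable, and without it your plan does not close the case $j=7$; by contrast, once the explicit factorizations are in hand, the non-vanishing part you outline is essentially automatic, as in the paper.
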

\begin{proof}
There is nothing to prove for $j=1,2$,  so we assume $j=4,5,7$.
We should only check divisibility, because non-zero property then follows from factorization of $\check D_j$.
One can check that
\be
\check f_{14}v_\la&\simeq & (f_{13}f_{\al}-f_\al f_\bt f_\al\bar q^3)[\theta+3]_q[\theta+2]_q,
\label{f14}\\
\check f_{25}v_\la
&\simeq &\Bigl(f_{24}f_{\al}\bar q^2\frac{1-q^{2\theta-2}}{q-q^{-1}}+f_{\bt}f_\al^2\frac{1}{[2]}(\bar q[3]_q+\frac{1-q^{2\theta}}{q-q^{-1}})\Bigr)\dt_5[\theta+2]_qv_\la,
\label{f25}
\\
\check f_{47}v_\la&\simeq &(f_{46}f_{\al} q^3-f_\al f_\bt f_\al)[\theta+3]_q[\theta+2]_q,
\label{f47}
\ee
in the module $M_\la^\k$.
Now the proof for $j=4$ readily follows from (\ref{f14}).

Furthermore,  (\ref{f25}) implies that $\check f_{15}$ is not divisible by $[\theta]_q$ and divisible by $[\theta+2]_q$, by Lemma \ref{dyn_root_vec_zero}.
Therefore, $\check f_{15}/(\dt_5[\theta+1]_q[\theta+2]_q)$ is regular and never turns zero. This proves the case $j=5$.

Equality (\ref{f47}) implies that $\check f_{i7}$ are divisible by  $[\theta+2]_q[\theta+3]_q$ for all $i\leqslant  4$.
Since $[\theta+2]_q \sqsubset \bar A^7_2 \sqsubset \phi^\k_7$, we have $[\theta+2]_q^2 \sqsubset \bar A^7_2 \check f_{37}v_\la \simeq e_\bt \check f_{27}v_\la$ and need to prove
$[\theta+2]_q^2 \sqsubset  \check f_{27}$.
It is easy to check that the operator
$$M_\la^\k[\la-3\al-2\bt]=\Span \{f_\bt f_\al^i f_\bt f_\al^{3-i} v_\la\}_{i=0}^2
\stackrel{e_\bt}{\longrightarrow} \Span \{f_\al^i f_\bt f_\al^{3-i} v_\la\}_{i=0}^2=M_\la^\k[\la-3\al-\bt]$$
is  independent of $\la$ and has zero kernel. Therefore  $\check f_{27}$ is divisible by $[\theta+2]_q^2$ as required.
\end{proof}
Define $D^\k_j$ from the equality $\wp^\k_j(u^\k_j)=D^\k_jw^\la_j$ and put $D^\k=\prod_{j\in I^\k}D^\k_j$.
 It follows that $D^\k_j\simeq D_j/(\phi^\k_{j}\psi^\k_j)$.
\subsection{Decomposition of $\V\tp M_\la^{\k_l}$}
Fix $t\in T^{\nu}$ with the stabilizer $\k$ and determine  $\la\in \Theta^\k$
from the equality $q^{2h_\la+2h_\rho}=tq^{h_\nu}$.
We use $x\in \C^\star$,  $x^2,x^3\not=1$,  to parameterise the spectrum $\{x^{\pm2},x^{\pm 1}, 1\}$ of  $t$ as in (\ref{t-l-reg}) and (\ref{t-l-bord}),
for the regular and borderline cases. Then the spectrum of the operator $q^{2\rho_1}\Q$ on the module $\V\tp M_\la^{\k_l}$ is $\{x^{\pm 2},q^{3}x^{\pm 1},q^{-2}\}$.
\begin{propn}
  The module $V\tp M_\la^\k$ splits into  direct sum $\op_{j\in I^\k}M_j^\k$ if and only if
 a) $q^3\not=x^{\pm 1}$, $\frac{q^3-x^{\pm 3}}{q-x^{\pm 1}}\not =0$,
 for $\nu=\bt$, b) $q^3\not=x^{\pm 1}, x^{\pm 3}$ for  $\nu\not =\bt$.
 \end{propn}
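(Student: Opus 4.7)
The plan is to mirror the short-root argument above. By the $\k_l$-analogue of Proposition \ref{W=V}, together with the fact established in the preceding subsection that each regularized singular vector $u^\k_j$ for $j\in I^\k$ is nonzero at every weight and satisfies $\wp^\k_j(u^\k_j)=D^\k_j w^\la_j$, the surjection $\sum_{j\in I^\k} M^\k_j \twoheadrightarrow V\tp M_\la^\k$ is an equality if and only if $D^\k:=\prod_{j\in I^\k} D^\k_j\not=0$. Whenever this holds, Lemma \ref{M_j_in_M_i}, translated to $M^\k_\la$, rules out any inclusion $M^\k_j\subset M^\k_i$ with $i<j$, and the standard argument then shows the sum is automatically direct.

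The remaining task is to translate $D^\k\not=0$ into a condition on $x$ and $q$. The factors $D^\k_j$ can be read directly off the table in Section \ref{secModStrL} for $\nu=\bt$, and, for the quasi-Levi cases $\nu\in\{3\al+\bt,3\al+2\bt\}$, off the opening of that section, where most $\phi^\k_j$ are trivial. Using the identification $q^{2h_\la+2h_\rho}=tq^{h_\nu}$ and the coordinates of $t$ from (\ref{t-l-reg})--(\ref{t-l-bord}), one expresses $q^{2\theta}$ as a Laurent monomial in $x$ and $q$, so that each $[m]_q$ becomes a Laurent polynomial in $x$. The key identity
\begin{equation*}
\frac{[3n]_q}{[n]_q}=q^{2n}+1+q^{-2n}
\end{equation*}
converts the trinomial quotients appearing in $D^\k$ into factors of the form $(q^3-x^{\pm 3})/(q-x^{\pm 1})$, while the remaining first-degree $q$-integers contribute factors $q^3-x^{\pm 1}$. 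Comparing the result with the stated inequalities yields (a) for $\nu=\bt$ and (b) for the quasi-Levi cases.

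The main obstacle will be the bookkeeping of cancellations in $D^\k$ for the Levi case $\nu=\bt$: the denominator factors $[\theta+2]_q$ and $[\theta+3]_q$ occurring separately in $D^\k_4$, $D^\k_5$, $D^\k_7$ must be cancelled against numerator factors in neighbouring $D^\k_j$ before $D^\k$ becomes a genuine Laurent polynomial. Only after this cleanup does the zero locus of $D^\k$ reduce to $\{q^3=x^{\pm 1}\}\cup\{(q^3-x^{\pm 3})/(q-x^{\pm 1})=0\}$; in particular, an apparent zero of $q^3-x^{\pm 3}$ at $x=q^{\pm 1}$ is spurious, being removed by the denominator, which is precisely why the stated condition uses the ratio rather than the bare numerator.
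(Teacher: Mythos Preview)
Your computational analysis of $D^\k$ is correct and matches the paper's: after the cancellations you describe one gets $D^\k\simeq(x-q^3)^2(x-\bar q^3)^2\frac{x^3-q^3}{x-q}\frac{\bar x^3-q^3}{\bar x-q}$ for $\nu=\bt$, and the same product without the denominators for $\nu\not=\bt$. The equivalence between $\sum_{j\in I^\k}M^\k_j=V\tp M^\k_\la$ and $D^\k\not=0$ also holds in both cases for the elementary filtration reason you indicate.

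The gap is in the passage from the sum being all of $V\tp M^\k_\la$ to it being \emph{direct} when $\nu\not=\bt$. You invoke ``the $\k_l$-analogue of Proposition~\ref{W=V}'', but that proposition is explicitly stated and proved only for Levi $\k$: its argument rests on $M^\k_j$ and $\gr\Vc^\k_j$ being parabolically induced from the same $U_q(\k)$-module, which is unavailable for the quasi-Levi roots $\nu\in\{3\al+\bt,\,3\al+2\bt\}$. Your fallback, translating Lemma~\ref{M_j_in_M_i} to $M^\k_\la$ and claiming that absence of pairwise inclusions forces directness, does not close the gap either: that lemma is about submodules of $V\tp M_\la$, not of $V\tp M^\k_\la$, and in any case ruling out containments $M^\k_j\subset M^\k_i$ is strictly weaker than ruling out nontrivial intersections in the sum. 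The paper handles the two cases differently: for $\nu=\bt$ it does appeal to Proposition~\ref{W=V}, while for $\nu\not=\bt$ it supplies a character argument, exhibiting explicit quotients $\tilde M^\k_j=M_{\la+\nu_j}/M_{\la+\nu_j-\ell_j\nu}$ with $\ell_j=\frac{2(\nu_j,\nu)}{(\nu,\nu)}+1\in\{1,2\}$ that surject onto $M^\k_j$, checking $\sum_{j\in I^\k}\Char\tilde M^\k_j=\Char(V\tp M^\k_\la)$, and concluding that the surjection $\bigoplus_j\tilde M^\k_j\to V\tp M^\k_\la$ is an isomorphism. This character step is the missing ingredient in your argument for the quasi-Levi case.
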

\begin{proof}
Assuming $t\in T^\nu$,  we get
  $D^\k\simeq
(x-q^3)^2(x-\bar q^3)^2
\frac{x^3-q^3}{x-q}
\frac{\bar x^3-q^3}{\bar x-q}
$, for $\nu=\bt$
and $D^\k\simeq
(x-q^3)^2(x-\bar q^3)^2
(x^3-q^3)(\bar x^3-q^3)
$, for  $\nu\not=\bt$.
  The sum $\sum_{j\in I^\k}M_j^\k$ exhausts all of $V\tp M_\la^\k$ if and only if $D^\k\not=0$.

Now suppose that $D^\k\not =0$. Then  the sum $\sum_{j\in I^\k}M_j^\k$ is direct for $\nu=\bt$ by Proposition \ref{W=V}.
For $\nu\not =\bt$, that is obvious if the $\Q$-eigenvalues are pairwise distinct (which is violated
for a finite number of $q$). Still it is true in all cases.
We give a sketch of the proof based on character analysis.
One can check that $M_j^\k$ are quotients of $\tilde M_j^\k$, where $\tilde M_j^\k=M_{\la+\nu_j}/M_{\la+\nu_j-\ell_j\nu}$ with $\ell_j=\frac{2(\nu_j,\nu)}{(\nu,\nu)}+1\in \{1,2\}$.
It is easy to see that $\sum_{j\in I^\k}^{}\Char \tilde M_j^\k=\Char V\times \frac{e^{\la}(1-e^{-\nu})}{\prod_{\al\in \Rm^+}(1-e^{-\al})}=\Char (V\tp M^\k_\la)$.
Now the map $\op_{j\in I^\k}\tilde M_j^\k\to \V\tp M_\la^{\k_l}$ is injective because it is surjective.
This also implies $\tilde M_j^\k\simeq M_j^\k$ for all $j\in I^\k$.
\end{proof}

\section{Decomposition of $\V\tp M_\la^{\k}$ for pseudo-Levi $\k$}
\label{secDecPL}
Let $\mu$ and $\nu$ denote, respectively, the minimal and maximal weight in $\Pi_\k$ and put $\m\subset \k$ to be the reductive subalgebra
of maximal rank such that  $\Pi_\m=\{\mu\}$. For  $\la\in \Theta_\m\subset \Theta_\k$, the
 homomorphism  $M_\la\to M_\la^\k$ factors through the projection  $M_\la^\m \to M_\la^\k$.
Its restriction  $M_\la^\m[\la-\xi]\to M_\la^\k[\la-\xi]$
is an isomorphism for  $\xi <\nu$. It follows from here that the  map $\sum_{\nu_i<\nu}w_i\tp M_\la^\m[\la-\nu_i]\to \sum_{\nu_i<\nu}w_i\tp M_\la^\k[\la-\nu_i]$
 sends non-vanishing singular vectors $u_j^\m$ with $j\in I^\k\subset I^\m$ over to non-zero singular vectors, $u^\k_j
\in V\tp M_\la^\k$.  One can check that $D_j^\k(\la)=D_j^\m(\la)\not=0$, for all $j\in I^\k$.

\begin{table}[h]
  \centering
\begin{tabular}{|c||c|r|r|c|c|}
  \hline
$\Pi_\k$&$t$&$q^{2(\la,\al)}$& $q^{2(\la,\bt)}$& $I^\k$\\\hline\hline
$\{\bt,3\al+\bt\}$&$(e^{\frac{2\pi i}{3}},e^{- \frac{2\pi i}{3}},e^{- \frac{2\pi i}{3}})$&$e^{-\frac{2\pi i}{3}}q^{-2}$&$1$&$\{1,2,4\}$\\
  \hline
$\{\bt,3\al+\bt\}$&$(e^{-\frac{2\pi i}{3}},e^{\frac{2\pi i}{3}},e^{\frac{2\pi i}{3}})$& $e^{\frac{2\pi i}{3}}q^{-2}$ & $1$&$\{1,2,4\}$\\
  \hline
$\{\al,3\al+2\bt\}$&$(-1,-1,1)$&$1$ & $-q^{-6}$&$\{1,3\}$\\
  \hline
$\{\al+\bt,3\al+\bt\}$&$(-1,1,-1)$&$-1$ & $-q^{-6}$&$\{1,2\}$\\
  \hline
$\{\bt,2\al+\bt\}$&$(1,-1,-1)$&$-q^{-4}$ & $1$&$\{1,2\}$\\
  \hline

\end{tabular}
    \caption{Pseudo-parabolic type}
\label{non-Levi}
\end{table}
\begin{propn}
  For all pseudo-Levi $\k\in \g$, $V\tp M_\la^\k=\op_{j\in I^\k}M_j^\k$.
\end{propn}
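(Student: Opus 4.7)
My plan is to reduce the pseudo-Levi case to the Levi case already established, by factoring the quotient $M_\la\twoheadrightarrow M_\la^\k$ through the intermediate Levi quotient $M_\la^\m$. From the decomposition $V\tp M_\la^\m=\op_{j\in I^\m}M_j^\m$ proved in Section \ref{secModStrL} (or \ref{secModStrS}), provided $\la$ satisfies the non-degeneracy assumptions of the relevant proposition, the first step is to verify for each of the five rows of Table \ref{non-Levi} that the point $t\in T^\k$ gives a weight $\la$ in the open set where the $\m$-decomposition is valid. This is a finite case-by-case inspection using the explicit formulas for $D^\m$.

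Tensoring the surjection $M_\la^\m\twoheadrightarrow M_\la^\k$ with $V$ yields a surjection $V\tp M_\la^\m\twoheadrightarrow V\tp M_\la^\k$, and the image of $V\tp M_\la^\k$ is the sum of the images of the summands $M_j^\m$. For $j\in I^\k\subset I^\m$, the paragraph just above the proposition identifies this image as the non-zero highest weight submodule $M_j^\k\subset V\tp M_\la^\k$ with $D_j^\k(\la)=D_j^\m(\la)\not=0$. For $j\in I^\m\setminus I^\k$, by definition there is some $i<j$ with $\nu_i-\nu_j=\nu$; applying Proposition \ref{zero_M_j^k} to the Levi subalgebra $\k_\nu$ with $\Pi_{\k_\nu}=\{\nu\}$ (whose conditions $\la\in\Theta^{\k_\nu}\supset\Theta^\k$ are satisfied), the corresponding singular vector $u_j^{\k_\nu}$ vanishes in $V\tp M_\la^{\k_\nu}$, and because $M_\la^\k$ is a further quotient of $M_\la^{\k_\nu}$, it also vanishes in $V\tp M_\la^\k$. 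Together with surjectivity, this yields $V\tp M_\la^\k=\sum_{j\in I^\k} M_j^\k$.

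To conclude directness of the sum, I would check at each of the five rows of Table \ref{non-Levi} that the $\Q$-eigenvalues $\{x_j\}_{j\in I^\k}$ are pairwise distinct, placing the submodules $M_j^\k$ into distinct $\Q$-eigenspaces; the eigenvalues are given explicitly by $x_j=q^{2(\la+\rho,\nu_j)+(\nu_j,\nu_j)-(\nu_1,\nu_1)+2(\rho,\nu_1)}$, so this is an immediate finite check. I expect the main obstacle to be the compatibility check in the first step: at each pseudo-Levi point, the weight $\la$ is isolated, so one must verify that none of the non-degeneracy inequalities of the $\m$-decomposition (such as $q\not=x^{\pm 1}$ in the short case, or $q^3\not=x^{\pm 1},x^{\pm 3}$ in the long case) accidentally fail at the specific special value of $x$ forced by $t\in T^\k$. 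If any such coincidence arose, the Levi-level decomposition would not apply at that point and a separate argument, likely via a character count along the lines of Section \ref{secModStrL}, would be required to patch that row.
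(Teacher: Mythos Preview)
Your approach has a genuine obstruction that materializes exactly where you feared. Take the row $\Pi_\k=\{\bt,2\al+\bt\}$ of Table~\ref{non-Levi}, so $\mu=\bt$, $\Pi_\m=\{\bt\}$ and $q^{2\theta}=q^{2(\la,\al)}=-q^{-4}$. In the Levi $\bt$-table (Table~2), the factor $\frac{[2\theta+4]_q}{[\theta+2]_q}=q^{\theta+2}+q^{-\theta-2}$ occurs in $D_7^\m$, and it vanishes precisely when $q^{2\theta+4}=-1$, which is the case here. Thus $D_7^\m(\la)=0$ and the decomposition $V\tp M_\la^\m=\oplus_{j\in I^\m}M_j^\m$ fails at this weight: the sum $\sum_{j\in I^\m}M_j^\m$ does not even exhaust $V\tp M_\la^\m$, so its push-forward cannot be shown to exhaust $V\tp M_\la^\k$ by your argument.

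The paper avoids this by not invoking the $\m$-decomposition at all. The paragraph before the proposition uses $\m$ only to transfer the unconditional non-vanishing of the regularized vectors $u_j^\m$ (established in Sections~\ref{secModStrS}--\ref{secModStrL} for all $\la\in\Theta^\m$) to non-vanishing of $u_j^\k$ for $j\in I^\k$, and to check $D_j^\k(\la)=D_j^\m(\la)\neq0$ for those $j$ only. Exhaustion of $V\tp M_\la^\k$ then comes from the $\Vc^\k$-filtration argument stated right after Proposition~\ref{W=V}: since $\gr\Vc_j^\k$ is cyclic on $w_\la^j$ and $\wp_j^\k(u_j^\k)=D_j^\k w_\la^j\neq0$, each $M_j^\k$ surjects onto $\gr\Vc_j^\k$, and $\gr\Vc_j^\k=0$ for $j\in\bar I^\k$ by Proposition~5.4; induction gives $\Wc_7^\k=\Vc_7^\k=V\tp M_\la^\k$. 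This argument needs $D_j^\k\neq0$ only for $j\in I^\k$, not for all $j\in I^\m$, which is why the bad factor at $j=7\notin I^\k$ is irrelevant. Directness then follows, as you say, from the explicit check that the $\Q$-eigenvalues $q^{2\xi_{ij}}$ on the (at most three) summands are pairwise distinct.
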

\begin{proof}
Since $D^\k\not =0$, the  sum $\sum_{j\in I^\k}M_j^\k$ gives all $V\tp M_\la^\k$. It is direct as the $\Q$-eigenvalues are distinct.
Indeed, for $\k=\k_{l,l}$ we have
$$
q^{2\xi_{12}}=e^{\mp \frac{2\pi i}{3}}, \quad q^{2\xi_{14}}=e^{\pm \frac{2\pi i}{3}}q^8, \quad q^{2\xi_{24}}=e^{\mp \frac{2\pi i}{3}}q^{6},
$$
where the upper sign corresponds to the first row in Table \ref{non-Levi}.
For the three $\k_{s,l}$-points we have
$$
q^{2\xi_{12}}=-q^{2}, \quad q^{2\xi_{12}}=-q^{2}, \quad q^{2\xi_{12}}=-q^{-2},
$$
respectively, from the top downward.
\end{proof}
\appendix
\section{Appendix}
\subsection{Formulas for $\eta_{ij}$ and $\xi_{ij}$}
\label{eta_xi}
Below we present the explicit expressions for $\eta_{ij}$ and $\xi_{ij}$,  $i\leqslant j$, arranging them into matrices.
$$
(\eta_{ij})=
\left(
\begin{array}{lllllllll}
  0 & h_{\al} & h_{\al+\bt}+3 & h_{2\al+\bt}+4 & h_{3\al+\bt}+3 & h_{3\al+2\bt}+6 & h_{4\al+2\bt}+6 \\
   & 0 & h_{\bt} & h_{\al+\bt}+3 & h_{2\al+\bt}+4 & h_{2\al+2\bt}+4 & h_{3\al+2\bt}+6, \\
   &  & 0 & h_{\al} & h_{2\al}-2 & h_{2\al+\bt}+4 & h_{3\al+\bt}+3 \\
   &  &  & 0 & h_{\al} & h_{\al+\bt}+3 & h_{2\al+\bt}+4 \\
   &  &  &  & 0 & h_{\bt} & h_{\al+\bt}+3 \\
   &  &  &  &  & 0 & h_{\al} \\
   &  &  &  &  &  &0
\end{array}
\right)
$$
$$
(\xi_{ij})=
\left(
\begin{array}{lllllllll}
   0& h_{\al}+1 & h_{\al+\bt}+4 & h_{2\al+\bt}+6 & h_{3\al+\bt}+6 & h_{3\al+2\bt}+9 & h_{4\al+2\bt}+10 \\
   & 0 & h_{\bt}+3 & h_{\al+\bt}+5 & h_{2\al+\bt}+5 & h_{2\al+2\bt}+8 & h_{3\al+2\bt}+9, \\
   &  & 0 & h_{\al}+2 & h_{2\al}+2 & h_{2\al+\bt}+5 & h_{3\al+\bt}+6 \\
   &  &  & 0 & h_{\al} & h_{\al+\bt}+3 & h_{2\al+\bt}+4 \\
   &  &  &  & 0 & h_{\bt}+3 & h_{\al+\bt}+4 \\
   &  &  &  &  & 0 & h_{\al}+1 \\
   &  &  &  &  &  & 0
\end{array}
\right)
$$
\subsection{Entries of the matrix $F$}
\label{ShapInv}
Here we present explicit expressions of the entries $f_{ij}$, $i<j$, participating in the reduced Shapovalov inverse form.
$$
\scriptstyle
f_{12}=f_\al, \quad f_{23}=[3]_qf_\bt, \quad f_{34}=f_\al, \quad f_{45}=f_\al, \quad f_{56}= [3]_qf_\bt, \quad f_{67}=f_\al,
$$
$$
\scriptstyle
f_{13}  =[f_\bt,f_\al]_{q^3},
\quad
f_{24}  =[f_\al,f_\bt]_{q^3},
\quad
f_{35}  =\frac{\bar q^{2}}{[2]_q}[f_\al,f_\al]_{q^2},
\quad
f_{46}  =[f_\bt,f_\al]_{q^3},
\quad
f_{57}  =[f_\al,f_\bt]_{q^3},
$$
$$
\scriptstyle
f_{14}=\frac{\bar q[f_\al,[f_\bt,f_\al]_{q^3}]_{q^3}}{[2]_q},
\quad
f_{25}  =\frac{[f_\al,[f_\al,f_\bt]_{q}]_{q^3}}{[2]_q^2},
\quad
f_{36}  =\frac{[[f_\bt,f_\al]_{q},f_\al]_{q^3}}{[2]_q^2},
\quad
f_{47} =\frac{\bar q[[f_\al,f_\bt]_{q^3},f_\al]_{q^3}}{[2]_q}
$$
$$
\scriptstyle
f_{15}  =\frac{\bar q^2[f_\al,[f_\al,[f_\bt,f_\al]_{q^3}]_{q^3}]_{ q}}{[2]_q^2}
\quad
f_{26} =\frac{\bar q^{3}[f_\bt,[f_\al,[f_\al,f_\bt]_{q}]_{q^3}]_{q^6}}{[2]_q^2},
\quad
f_{37}  =\frac{\bar q^2[[[f_\al,f_\bt]_{q^3},f_\al]_{q^3},f_\al]_{q}}{[2]_q^2},
$$
$$
\scriptstyle
f_{16}  =\frac{\bar q^2}{[2]_q^2}[f_\bt,[f_\al,[f_\al,[f_\bt,f_\al]_{q^3}]_{q^3}]_{q}]_{q^3},
\quad
f_{27}  =\frac{\bar q^2}{[2]_q^2}[[[[f_\al,f_\bt]_{q^3},f_\al]_{q^3},f_\al]_{q},f_\bt]_{q^3},
$$
$$
\scriptstyle
 f_{17} =\frac{\bar q^2}{[2]_q^2}[f_\al,[f_\bt,[f_\al,[f_\al,[f_\bt,f_\al]_{q^3}]_{q^3}]_{q}]_{q^3}]_{q}.
$$

\end{document}